\newdimen\minCDarrowwidth
\def\ZZ{{\mathbb Z}}
\def\CC{{\mathbb C}}
\def\AA{{\mathbb A}}
\def\QQ{{\mathbb Q}}
\def\PP{{\mathbb P}}
\def\cI{\mathcal{I}}
\def\cJ{\mathcal{J}}
\def\cA{\mathcal{A}}
\def\cG{\mathcal{G}}
\def\cK{\mathcal{K}}
\def\cL{\mathcal{L}}
\def\cO{\mathcal{O}}
\def\cN{\mathcal{N}}
\def\cQ{\mathcal{Q}}
\def\cP{\mathcal{P}}
\def\cU{\mathcal{U}}
\def\cT{\mathcal{T}}
\def\Im{\mbox{ Im }}
\DeclareMathOperator{\Bl}{Bl} \DeclareMathOperator{\Coker}{Coker}
\DeclareMathOperator{\Ker}{ker} 
\DeclareMathOperator{\Proj}{Proj} 
 \DeclareMathOperator{\Spec}{Spec}
 \DeclareMathOperator{\codim}{codim}
\newtheorem{lemma}{Lemma}[section]
\newtheorem{theorem}[lemma]{Theorem}
\newtheorem{corollary}[lemma]{Corollary}
\newtheorem{proposition}[lemma]{Proposition}
\theoremstyle{definition}
\newtheorem{definition}[lemma]{Definition}
\newtheorem{example}[lemma]{Example}
\newtheorem{remark}[lemma]{Remark}
\newtheorem*{notation}{Notation}
\numberwithin{equation}{section}
\newcommand{\bean}{\begin{eqnarray}}
\newcommand{\eean}{\end{eqnarray}}
\newcommand{\bea}{\begin{eqnarray*}}
\newcommand{\eea}{\end{eqnarray*}}
\newcommand{\be}{\begin{displaymath}}
\newcommand{\ee}{\end{displaymath}}
\newcommand{\ol}{\overline}
\begin{document}

\title{The
structure of a local embedding and Chern classes of weighted
blow-ups}


\author{Anca~M.~Musta\c{t}\v{a}}
\author{Andrei~Musta\c{t}\v{a}}

\address{School of Mathematical Sciences, University College Cork, Cork, Ireland}
\email{{\tt a.mustata@ucc.ie, andrei.mustata@ucc.ie}}


\date{\today}


\begin{abstract}
For a proper local embedding between two Deligne--Mumford stacks $Y$ and $X$, we find, under certain mild conditions, a new (possibly non-separated)  Deligne--Mumford stack $X'$, with an etale, surjective and universally closed map to the target $X$, and whose fiber product with the image of the local embedding  is a finite union of stacks with corresponding  etale, surjective and universally closed maps to $Y$. Moreover, a natural set of weights on the substacks of $X'$ allows the construction of a universally closed push-forward, and thus a comparison between the Chow groups of $X'$ and $X$.  We apply the construction above to the computation of the Chern classes of a weighted blow-up along a regular local embedding via deformation to a weighted normal cone and localization. We describe the stack $X'$ in the case when $X$ is the moduli space of stable maps with local embeddings at the boundary.  We apply the methods above to find the Chern classes of the stable map spaces.

\end{abstract}

\maketitle
\bigskip

\section*{Introduction}

Local embeddings form an important class of morphisms of algebraic
stacks. For instance, the morphisms from the components of the
inertia stack of a Deligne--Mumford stack into the stack itself are
in general local embeddings. As another fundamental  example, the
diagonal morphism of a stack is a local embedding and thus, the
local study of this type of morphisms has led to a good definition
of an intersection product on smooth Deligne--Mumford stacks by A.
Vistoli \cite{vistoli}, with a subsequent simplification by A.
Kresch \cite{kresch}. Their work relies on the existence, for any
local embedding $Y\to X$, of an \'etale atlas $V\to Y$, such that
the composition $V\to X$ can be factored into a closed embedding
$V\to U$ followed by an \'etale atlas $U \to X$. Based on these
covers, the normal cone of a local embedding (\cite{vistoli}), and a
deformation of the ambient stack to the normal cone (\cite{kresch})
can be constructed. However, these constructions are local in
essence and as such, they fail to completely encode information on
global invariants like Chern classes, or Chow ring structures, and
cannot be directly employed in Riemann-Roch type theorems like the
Riemann-Roch without denominators as formulated for closed
embeddings of schemes.

 Under suitable assumptions, in this paper we replace the local construction above by one more suited to the purposes just mentioned, at the expense of working with non-separated stacks rather than separated schemes. Let $g: Y \to X$ be a proper, local embedding of Noetherian stacks. Assume that $Y$ is reduced and geometrically unibranch, that the morphism on the image $Y \to g(Y)$ is equidimensional, and that its degree is equal to a fixed number $d$ at all generic points.
 Theorems \ref{X'} and \ref{universally closed, probabilistic weight} highlight  the existence, for every such map $g:Y\to X$, of an \'etale cover by a stack $X' \to X$,
such that $Y':=g(Y)\times_XX'$, a finite union of \'etale covers
$Y'_i$  of $Y$, is embedded in $X'$, and such that the morphisms $Y'
\to Y$ and $X' \to X$ are universally closed. Moreover, the morphism
$X' \to X$ is an isomorphism outside the image of $Y$. Thus the
study of proper local embeddings can be reduced for practical purposes to
that of closed embeddings of stacks.

  While the map $p: X'\to X$ is forcefully not
  proper, it is universally closed. A weight function $w$ on the set of substacks of $X'$ is naturally attached to the map $p$, referring to the number of possible extensions of maps involved in the valuative criterion of properness.
  This "probabilistic weight" contributes to a good definition of
  push-forward $p_*$ between the Chow groups of $X'$ and $X$, an extension of the usual definition of proper
  push-forward to this type of universally closed maps.  In effect, the Chow group  $A(X)$  can be recovered from $A(X')$
  via the universally closed push-forward $p_*$.

   The definition of the \'etale lift $X'$ associated to the proper local embedding $g:Y \to X$ and the subsequent constructions are based on a network of local embeddings associated to $g$. This network in turn depends on the choice of a suitable atlas $U$ of $X$ and a partition of $Y\times_XU$, with properties specified in Proposition \ref{U}.
   The dependence is only partial: while the number of spaces which are nodes in the network may vary, the spaces themselves are intrinsically associated to $g$. For example, by replacing $U$ with a number of copies of itself, one increases the number of components in $Y\times_XX'$. In view of this, one could enquire on the existence of a "minimal" choice of \'etale atlas for $X$, that would yield a canonical \'etale lift $X'\to X$. This problem is addressed  by the authors in \cite{noi4}. However, there are contexts where other factors, like, e.g., a moduli problem,  may determine the naturalness of a (non-minimal) choice of \'etale atlas, and thus of an \'etale lift $X'$. This is the case for the moduli spaces of stable maps with their boundary, which we study in the third part of this article.


 One useful feature of our construction comes from the fact that for a suitable \'etale atlas $U$ of
 $X$, we take into account  the entire pullback of that atlas to the locally embedded $Y$. Locally, in the
 neighborhoods of some points in $U$, this pullback may consist of a number of intersecting components.
 Their intersections contribute essentially to the structure of the morphism $Y \to X$;  in a first
 instance, to the associated flat stratification of $X$. For this reason we encode them in a network of
 morphisms of stacks, a stack version of the configuration schemes of \cite{lunts}. In \cite{noi1}, we have
 defined the extended Chow ring of such a network. In Theorem \ref{compare to network} we prove that this extended Chow ring
 is isomorphic to $A(X')$. In particular, in the case of the moduli space of stable maps and the local
 embeddings of its boundary divisors, the extended Chow ring  has been calculated in \cite{noi1}. In
 Theorem \ref{rigid spaces} we now identify the corresponding stack $\ol{M}'_{0,m}(\PP^n,d)$ and
 formulate its moduli
 problem in terms of stable maps with marked components.

 Our approach is relevant, for instance, when considering a blow-up along a local embedding. Let us return to the initial picture of a commutative diagram
$$\diagram
V \dto^{} \rto^{ } & U \dto^{} \\
Y \rto^{} & X,
\enddiagram $$
where the top map is an embedding. Then simply taking the blow-up of $U$ along $V$ does not lead to an \'etale atlas  of a complete blow-up of $X$ along $Y$, due to a break in symmetry at the level of relations. Indeed, in small enough neighborhoods around each point in $U$, one needs to consider blow-up along each of the components of $Y\times_XU$ before defining the \'etale atlas for $\Bl_YX$. In section 2, we show how this can be done for smooth stacks $Y$ and $X$.

Weighted blow-ups form a class of morphisms with a variety of
applications. They come up, for example, when considering variation
of GIT. As algebraic stacks provide a natural context for the study
of weighted blow-ups, this class may be extended to weighted
blow-ups along regular local embeddings. Moduli spaces of (weighted)
curves and (weighted) stable maps are examples for which this type
of morphisms comes up naturally.


As an application to the universally closed \'etale cover
construction, blowing up Chern classes along local
embeddings of smooth stacks is reduced to the case of smooth embeddings. The basic
idea of this computation, like in the case of schemes, is to
retrieve the Chern classes from their pullback to the exceptional
divisor, for example via the Riemann-Roch without denominators
formula (\cite{fulton}). However, when weights are considered, a
less standard approach is necessary for the retrieval step.
  By a deformation to a "weighted normal cone", we reduce the problem to the case when both the blow-up locus and the exceptional divisor are fixed loci for $\CC^*$--actions making the blow-down morphism equivariant. The Atiyah--Bott localization theorem then means that the class of the exceptional divisor can be inverted, allowing us to retrieve a class on the blow-up from its pullback to the exceptional divisor.

The paper is organized as follows: In the first section we construct
the universally closed morphism which turns a proper local embedding $Y \to
X$ into an embedding. The first step is the case when the local
embedding is \'etale on its image. The general case is reduced to
this situation by flat stratification. A network of local embeddings
depending on the \'etale structure of the strata is highlighted in
section 1.2, and $X'$ is constructed by induction on strata, such
that all the local embeddings in the network are replaced by
embeddings. In section 1.3, universally closed morphisms and
push-forwards associated to probabilistic weights are defined. The
existence of an isomorphism between the Chow rings of $X'$ and of
the corresponding  network, as introduced in \cite{noi1}, is proven,
and the relation between the Chow rings of $X$ and $X'$ is
discussed. The second section of the paper contains the
calculation of Chern classes for weighted blow-ups. The third
section is dedicated to the example of the stable map spaces and its
intermediate weighted stable map spaces. The Appendix discusses the
Euler sequence of a weighted projective bundle. Although this
sequence is most likely known, we could not find a proof in the
literature, and so decided to carefully trace the sequence through
the groupoid presentation of the weighted projective bundle.

\textbf{Acknowledgements} The initial motivation for this article came from a question of Jason Michael Starr. We also wish to thank David Rydh
and the referee for most useful comments and suggestions.
The first ideas for this paper were formulated during our stay at the Mathematical Sciences Research Institute.
A first draft was finished during the second author's stay at
the Max Planck Institute for Mathematics in Bonn. We are grateful
for the hospitality of both institutes.
The authors acknowledge support by Science Foundation Ireland via
grant 08/RFP/MTH1759.

\section{The universal lift of a local embedding}

The stacks in this article are assumed to be algebraic in the sense
of Deligne--Mumford,  Noetherian, and all morphisms considered
between them are of finite type.

\subsection{The lift of a local embedding \'etale on its image.}

\begin{definition}
Following \cite{vistoli}, we will call local embedding any
representable unramified morphism of finite type of stacks. A
regular local embedding is a local embedding which is also locally a
complete intersection.
\end{definition}

Given a proper local embedding $Y\to X$,
there is a diagram
$$\diagram
V_1 \dto^{p_1} \rto^{ g_1 } & U \dto^{p} \\
Y \rto^{g} & X,
\enddiagram $$
the vertical morphisms being \'etale atlases and $g_1$ being a closed
embedding of schemes (Lemma 1.19 in \cite{vistoli}). Let $V$ be the scheme representing the fiber
product $Y\times_X U$, with the induced map $g_U: V \to U$, and the
image of $g_U$ denoted by $W$.
 As $V_1\to Y$ and
$V \to Y$ are \'etale, then so must be the induced morphism $i_1:
V_1\to V$. Since $V_1\to U$ is an embedding, then so must be $V_1\to
V$.
  We write $V=V_1\cup V_2$, where $V_2$ is the closure in $V$ of $V\setminus V_1$.
  In fact, $V_1$ and $V_2$ are disjoint, as the map $V=V_1\cup V_2\ \to Y$ is \'etale.
  Denote by $p_i$ the restriction $p_i: V_i\to Y$,  by $W_i$ the image of $V_i$ in
  $U$.

Furthermore, if the morphism $g$ is \'etale on its image, then by the \'etale lifting property
(\cite{sga} I, Proposition 8.1.), $V_1$ and $U$ can be chosen such
that $$V_1=g(Y)\times_X U,$$ and as such, there is an \'etale
morphism $ h: V\to V_1$ with the property that $h\circ i_1= \mbox{
id }_{V_1}$.

 We recall the \'etale groupoid presentation $\left[ R\rightrightarrows U\right]$
 of the stack $X$, given by the two projection morphisms $ R:= U\times_X U \rightrightarrows U$,
 together with canonical morphisms $e$, $m,$ and $i$. Here the identity $e$ is the diagonal
 morphism $e: U \to U\times_XU$, the multiplication $m$ is
 $$m:=\pi_{13}: U\times_XU\times_XU \cong ( U\times_X U)\times_U (U\times_X U) \to U\times_X U,$$
 and the inverse morphism $ i: U\times_X U \to U\times_X U $ switches the two terms of the product.

\begin{notation}
 Let $$S_{ij}:=\Im (\phi_{ij}: V_i\times_Y V_j \to U\times_X U ),$$
for the map $\phi_{ij}$ given as a composition
 $$V_i\times_Y V_j \hookrightarrow V\times_Y V = V\times_Y(
 Y\times_X U) \cong V \times_X U \to U\times_X U.$$
  We denote by $R'$ the subset
$$R':= R \setminus (S_{12}\cup S_{21}\cup (S_{22}\setminus S_{11})) \cup \Im e.$$

From now on let $g: Y \to X$ be a proper local embedding. Then
$S_{ij}$ are closed subschemes of $R$. In general $S_{22}\setminus
S_{11}$ might not be closed, but it is so in the case when $g$ is
\'etale on its image, for dimension reasons. As $U$ is an \'etale
atlas of $X$, then the diagonal $e: U\to U\times_XU$ is an open
embedding, and thus, under the above condition on $g$, the subscheme
$R'$ of $R$ is open.

\end{notation}
\begin{proposition} \label{etale} Let $g: Y \to X$ be a proper morphism, \'etale on its
image. The restrictions $s_1, s_2: R' \to U$ of the two projection
morphisms $R\to U$, together with $e$, and with the restriction of
$m$ to $R'\times_U R'$ and of $i$ to $R'$ form the groupoid
presentation of a Deligne-Mumford stack $X_Y$.
\end{proposition}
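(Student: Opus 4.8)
The plan is to realize $[R'\rightrightarrows U]$ as a wide open subgroupoid of the \'etale groupoid presentation $[R\rightrightarrows U]$ of $X$. Since $R'$ is an open subscheme of $R$ containing $\Im e$, and the two projections $R\to U$ are \'etale, their restrictions $s_1,s_2:R'\to U$ are again \'etale, and the map $R'\to U\times U$, being the restriction of the unramified $R\to U\times U$, stays unramified. Consequently, as soon as $R'$ is shown to be stable under the structure morphisms $e$, $i$ and $m$, the quotient $X':=[U/R']$ is automatically a Deligne--Mumford stack (not necessarily separated, as expected), and the associativity, unit and inverse identities are inherited by restriction from those already valid on $R$. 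The whole proposition therefore reduces to three closure statements: $\Im e\subseteq R'$, $i(R')=R'$, and $m(R'\times_U R')\subseteq R'$.

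The first holds by the definition of $R'$, which adjoins $\Im e$. The second follows from the behaviour of the strata under the inverse. As $i$ interchanges the two factors of $R=U\times_X U$ and $\phi_{ji}$ is obtained from $\phi_{ij}$ by exchanging those factors, one has $i(S_{ij})=S_{ji}$; hence $i(S_{12}\cup S_{21})=S_{21}\cup S_{12}$, while $S_{11}$ and $S_{22}$ are each $i$--invariant, so that $i(S_{22}\setminus S_{11})=S_{22}\setminus S_{11}$, and trivially $i(\Im e)=\Im e$. Thus $i$ preserves both the removed locus and the adjoined diagonal, giving $i(R')=R'$.

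The heart of the argument, and the step I expect to be the \textbf{main obstacle}, is the stability of $R'$ under $m$: for a composable pair $(u_1,u_2),(u_2,u_3)\in R'$ one must show $(u_1,u_3)=m((u_1,u_2),(u_2,u_3))\in R'$. The key is the description of the strata by lifts: $(u,u')\in S_{ij}$ precisely when there is a point $y\in Y$ admitting a lift of $u$ in $V_i$ and a lift of $u'$ in $V_j$ over $y$. Using the \'etale retraction $V\to V_1$ of $i_1$, which is a morphism over $Y$, together with the embedding $V_1\hookrightarrow U$ identifying $V_1$ with $W_1$, the stratum $S_{11}$ becomes the \'etale equivalence relation $W_1\times_Y W_1$ on $W_1$, and in particular is transitive. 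I would argue by contraposition: if $(u_1,u_3)\notin R'$, then $u_1\neq u_3$ and $(u_1,u_3)\in S_{12}\cup S_{21}\cup(S_{22}\setminus S_{11})$, so a witnessing point $y\in Y$ for $(u_1,u_3)$ produces, via the retraction to $V_1$ and the transitivity of $S_{11}$, a lift pattern for the intermediate point $u_2$ that forces one of $(u_1,u_2)$ or $(u_2,u_3)$ into $S_{12}\cup S_{21}\cup(S_{22}\setminus S_{11})$, contradicting its membership in $R'$.

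The delicate points of this last step, where I expect the real work to lie, are the behaviour over the overlap locus $W_1\cap W_2$, where $S_{11}$ meets $S_{12}$, $S_{21}$ and $S_{22}$, and the bookkeeping of the correction term $S_{22}\setminus S_{11}$ under composition. Here one uses that $g$ is \'etale on its image, which by the dimension argument recorded in the Notation guarantees that $S_{22}\setminus S_{11}$ is closed; this both keeps $R'$ open and ensures that the forbidden locus is a union of closed strata compatible with the groupoid operations, so that the case analysis sorting $u_1,u_2,u_3$ according to the sheets $V_1,V_2$ to which they lift closes up consistently.
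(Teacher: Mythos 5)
Your overall reduction is sound and matches the paper's strategy: $R'$ is an open subscheme of $R$ containing $\Im e$, so the restricted source and target maps stay \'etale, and the proposition reduces to stability of $R'$ under $e$, $i$ and $m$; your reflexivity and symmetry checks are correct. The genuine gap is in the multiplicative closure, which you rightly call the heart but leave as a plan, and the plan as stated rests on a false structural premise. You locate the difficulty at the overlap $W_1\cap W_2$, ``where $S_{11}$ meets $S_{12}$, $S_{21}$ and $S_{22}$.'' In fact the paper's very first step is to prove $S_{11}\cap S_{12}=\emptyset$ and $S_{11}\cap S_{21}=\emptyset$, via the decomposition $V_1\times_XU\cong V_1\times_Y V=(V_1\times_YV_1)\sqcup(V_1\times_YV_2)$ together with the embedding $V_1\hookrightarrow U$: over a point of $g(Y)$ the lift of a point of $U$ to $V$ over a fixed $y$ is unique, so it lies in $V_1$ or $V_2$ but not both. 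This disjointness is not a delicacy to be negotiated; it is exactly what lets one read off, stratum by stratum, where an element of $R'$ lying over $g(Y)$ can sit (only $S_{11}\cap S_{22}$ can be nonempty, which is why that part of $S_{22}$ is retained in $R'$).

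The second ingredient you omit is the diagonal of $V_2$: a point of $\Im e$ lying over $W_2\setminus W_1$ belongs to $S_{22}\setminus S_{11}$ yet must remain in $R'$, so the allowed locus over $g(Y)$ is $S_{11}\cup \Im e_2$, where $e_2$ is the composite $V_2\to V_2\times_YV_2\to U\times_XU$; the paper records this as the alternative description $R'=R\setminus(S_{12}\cup S_{21}\cup S_{22}\setminus(S_{11}\cup \Im e_2))$, using $S_{22}\cap \Im e=\Im e_2$. With these two facts the closure under $m$ becomes a short direct check, with no need for contraposition: if a composable pair in $R'\times_UR'$ has composite in some $S_{kl}$, then both endpoints lie in $W$, and since $\pi_{13}^{-1}(S_{kl})\subseteq (W\times_XU)\times_U(U\times_XW)$ while $W\times_XU=\bigcup_{i,j}S_{ij}$, each factor lies over $g(Y)$ and hence, being in $R'$, lies in $S_{11}\cup \Im e_2$; then the injectivity of $g_1:V_1\to U$ (your ``transitivity of $S_{11}$'' is the same use of injectivity) gives $\pi_{13}(S_{11}\times_US_{11})\subseteq S_{11}$, and trivially $\pi_{13}(S_{11}\times_U\Im e_2)\subseteq S_{11}$ and $\pi_{13}(\Im e_2\times_U\Im e_2)\subseteq \Im e$, all inside $R'$. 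Your contrapositive could be completed along exactly these lines, but as written the forcing step (``a lift pattern for $u_2$ that forces\ldots'') is asserted rather than proved, and the deferred bookkeeping of $S_{22}\setminus S_{11}$ is precisely where the two unproved facts above are indispensable.
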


\begin{proof}

 First note that
 $S_{12}\cap S_{11} =
 \emptyset$ and $S_{21}\cap S_{11} = \emptyset.$
Indeed, $$ U \times_XU  \hookleftarrow V_1 \times_XU\cong V_1
\times_Y \times (Y\times_XU)= V_1 \times_Y ( V_1 \sqcup V_2)= (V_1
\times_Y V_1) \sqcup (V_1 \times_Y V_2). $$
 In fact $R'$ may be written alternatively as the difference
$$R':= R \setminus (S_{12}\cup S_{21}\cup S_{22}\setminus S_{11}).$$
Indeed, $S_{12}\cap \Im e \subseteq S_{12} \cap S_{11}=\emptyset$, and
symmetrically $S_{12}\cap \Im e = S_{21}\cap \Im e =\emptyset.$
Furthemore, there is a sequence of consecutive Cartesian diagrams \bea
\begin{array}{ccccc} V_2 & \hookrightarrow & V & \longmapsto & U \\
                     \downarrow & & \downarrow & & \downarrow  \\
 V_2\times_YV & \hookrightarrow & V\times_YV & \longmapsto  & U\times_X U,
\end{array}
\eea where the vertical maps are diagonal morphisms, with
$V_2\times_YV_2 \subseteq V_2\times_YV$ and $V\times_YV\cong
V\times_X U$. This implies that $S_{22} \cap \Im e =  \Im e_2$, where for each $i\in \{1,2\}$,
 $e_i$ is the composition of the diagonal $V_i\to V_i\times_YV_i$ with the morphism $\phi_{ii}: V_i\times_YV_i \to U\times_X U$.
But $\Im e_2 = \Im e_1 \subseteq S_{11}$, due to the commutative diagram
\bea \diagram V_2 \rto\dto & V \rto^{h}\dto & V_1 \rto^{g_1}\drto^{e_1} & U \dto^e\\
V_2\times_YV_2 \rto & V\times_YV \rrto & & U\times_XU.
\enddiagram
\eea
As  $R'$ is both symmetric and reflexive by construction, the
existence of a groupoid structure $\left[ R' \rightrightarrows U
\right]$ reduces to checking the closure of $R'$ under
multiplication. We note that for $k,l\in \{1, 2 \}$, the preimage
$$\pi_{13}^{-1}(S_{kl}) \subseteq W \times_XU\times_X W \cong (W
\times_XU)\times_U (U\times_X W).$$ But $W \times_XU$  is the image
of $V \times_XU\cong \bigsqcup_{i,j \in \{ 1,2 \}  } V_i \times_Y V_j,$
namely $\bigcup_{i,j \in \{ 1,2\}  } S_{ij}$.

The injectivity of $g_1:V_1\to U$ directly implies $\pi_{13}(
S_{11}\times_US_{11})\subseteq S_{11}$.
Thus the multiplication $m$ is well defined on $R'$.

\end{proof}

\begin{example}
Let $Z\hookrightarrow X$ be a closed embedding of stacks and let $Y:=\bigsqcup_{i=1}^nZ_i$, where $Z_i\cong Z$, with the natural morphism to $X$ identifying all copies of $Z$. Then $X_Y$ is obtained by gluing $n$ copies of $X$ along $X\setminus Z$.
\end{example}

With the notations from the previous proposition, the following also
holds.

\begin{proposition}\label{Y cong g(Y)xX}
There exist a natural embedding $Y \hookrightarrow X_Y$ and an
\'etale map $X_Y\to X$ making the following diagram Cartesian
$$   \diagram     Y  \rto \dto & X_Y \dto \\ g(Y) \rto  & X \enddiagram.  $$
\end{proposition}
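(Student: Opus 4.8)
The plan is to realize both assertions through the common groupoid presentations $X=[R\rightrightarrows U]$ and $X'=[R'\rightrightarrows U]$, using that $R'\subseteq R$ is an open subgroupoid containing $\Im e$ (Proposition \ref{etale}). First I would define the map $p':X'\to X$ as the morphism induced by $\mathrm{id}_U$ on atlases and by the open immersion $R'\hookrightarrow R$ on relations; this is a well-defined morphism of groupoids precisely because $R'$ is symmetric, reflexive and closed under $m$, which is the content of Proposition \ref{etale}. To see that $p'$ is \'etale, note that the atlas $q:U\to X'$ is a surjective \'etale cover, since the source and target maps of $[R'\rightrightarrows U]$ are restrictions of the \'etale projections $R\to U$ to the open subscheme $R'$. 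The composite $p'\circ q$ is exactly the original \'etale atlas $U\to X$, and since $q$ is surjective \'etale while $p'\circ q$ is \'etale, it follows that $p'$ is \'etale.

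Next I would obtain the embedding and the Cartesian square simultaneously, by computing the fiber product $g(Y)\times_X X'$ at the level of groupoids. Writing $Z=g(Y)$ and recalling $W_1\cong V_1$, the atlas of $Z\times_X X'$ is $Z\times_X U=V_1$, while its relation scheme is $Z\times_X R'=R'\cap(W_1\times_X W_1)$, using that $W_1=Z\times_X U$ is the preimage of $Z$ in $U$ and that a point of $R$ lies over $Z$ iff either (hence both) of its projections lies in $W_1$. The key claim is then
\[
R'\cap (W_1\times_X W_1)=S_{11}\cong V_1\times_Y V_1 .
\]
The inclusion $S_{11}\subseteq R'\cap(W_1\times_X W_1)$ is immediate from $S_{11}\subseteq W_1\times_X W_1$ together with $S_{11}\cap S_{12}=S_{11}\cap S_{21}=\emptyset$, so that $S_{11}$ survives in $R'$. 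For the reverse inclusion I would use that $W_1\times_X W_1\subseteq W\times_X U=\bigcup_{i,j}S_{ij}$, so any $\rho\in R'\cap(W_1\times_X W_1)$ lies in some $S_{ij}$; membership in $R'$ rules out $S_{12}$ and $S_{21}$, while if $\rho\in S_{22}$ then $\rho\in R'$ forces either $\rho\in S_{11}$ or $\rho\in S_{22}\cap\Im e=\Im e_2$, and in the latter case $\rho\in W_1\times_X W_1$ makes $\rho$ a diagonal point of $W_1$, hence again in $S_{11}$. This identifies $Z\times_X X'=[S_{11}\rightrightarrows V_1]=[V_1\times_Y V_1\rightrightarrows V_1]=Y$, compatibly with $s_1,s_2,e,m,i$, which is exactly the Cartesian square.

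Finally, the embedding $Y\hookrightarrow X'$ is read off as the top horizontal map of this Cartesian square: it is the base change of the closed embedding $g(Y)\hookrightarrow X$ along the flat morphism $p'$, hence a representable closed immersion, and it is natural because it is induced by $W_1\cong V_1\hookrightarrow U\to X'$ descended along the cover $V_1\to Y$.

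The step I expect to be the main obstacle is the identification $R'\cap(W_1\times_X W_1)=S_{11}$. The images $W_1$ and $W_2$ in $U$ may genuinely intersect, so the various $S_{ij}$ can overlap over $W_1\times_X W_1$, and it is exactly the definition of $R'$ (removing $S_{12}$, $S_{21}$ and $S_{22}\setminus S_{11}$ while adding back $\Im e$), together with the disjointness relations $S_{11}\cap S_{1j}=\emptyset$ and $S_{22}\cap\Im e=\Im e_2$ established in the proof of Proposition \ref{etale}, that forces the restriction of the relations of $X'$ over $g(Y)$ to collapse back to the relations $V_1\times_Y V_1$ of $Y$. Care is also needed to verify that all these identifications are compatible with the structure maps $s_1,s_2,e,m,i$, so that the equality of relation schemes genuinely upgrades to an isomorphism of stacks.
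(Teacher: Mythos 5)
Your proposal is correct and takes essentially the same approach as the paper: both obtain the \'etale map $X'\to X$ from the groupoid morphism given by $\mathrm{id}_U$ and the open immersion $R'\hookrightarrow R$, and both reduce the Cartesian square to identifying the restriction of $R'$ over $g(Y)$ (equivalently over $V_1\cong W_1$) with $V_1\times_Y V_1\cong S_{11}$, using the disjointness relations $S_{11}\cap S_{12}=S_{11}\cap S_{21}=\emptyset$ and $S_{22}\cap\Im e=\Im e_2$ established in Proposition \ref{etale}. The only difference is cosmetic ordering: the paper first verifies $V_1\times_U R'\cong V_1\times_Y V_1$ via $V\times_U R'\cong (V_1\times_Y V_1)\sqcup\phi_{22}^{-1}(S_{11}\cup\Im e_2)$ and then deduces $Y\cong g(Y)\times_X X'$, whereas you compute $g(Y)\times_X R'=R'\cap(W_1\times_X W_1)=S_{11}$ directly and read off the embedding as the base change of $g(Y)\hookrightarrow X$.
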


\begin{proof}
 The composition of  \'etale morphisms $R' \hookrightarrow R \rightrightarrows U$
being \'etale, the natural morphism of groupoids
$$   \begin{array}{ccc}  R' & \longrightarrow  & R \\
\downdownarrows &  &\downdownarrows \\
                U  & \longrightarrow & U
\end{array} $$
 induces an \'etale morphism of stacks $X_Y\to X$. Next we check that
 $V_1= Y\times_{X_Y} U$, namely that the diagram defining a groupoid
 morphism
  $$   \begin{array}{ccc} V_1\times_Y V_1  & \longrightarrow  & R' \\
\downdownarrows &  &\downdownarrows \\
                V_1 & \longrightarrow & U
\end{array} $$
is Cartesian. Indeed,
 $$V\times_YV \cong V\times_XU\cong V\times_U (U\times_X U)= V\times_U R$$
 and therefore $$   V\times_U
 R'\cong (V\times_YV)\times_{R} R'\cong  (V_1\times_Y V_1) \sqcup
\phi_{22}^{-1}(S_{11}),$$ and by restriction to $V_1$,
$$ V_1\times_{U} R' \cong V_1\times_Y V_1.$$ This proves the existence of a
natural embedding of  $Y$ in $X_Y$ such that its composition with the
\'etale map $X_Y\to X$ yields the local embedding of $Y$ into $X$.

 At the beginning of this section, the cover $V_1$ of $Y$ was constructed as a
 fiber product
$V_1=g(Y)\times_X U,$ and thus $  V_1\times_YV_1 \cong V_1\times_{U}
R' \cong g(Y)\times_X R'$, forming an isomorphism  of groupoids
 $$    \left[ V_1\times_Y V_1 \rightrightarrows V_1 \right]   \cong
 \left[  g(Y)\times_X R' \rightrightarrows g(Y)\times_X U \right],$$
 which induces an isomorphism $Y\cong  g(Y)\times_XX_Y$.
\end{proof}

Although the groupoid presentation of the stack $X_Y$ depended on
particular choices of covers for $X$ and $Y$, the stack $X_Y$ is
uniquely defined by a universality property, which can be expressed
in terms of moduli problems as follows


\begin{theorem}\label{universality}
There is an equivalence of categories from the category of morphisms
$Z \to X_Y$ defined on stacks $Z$ of finite type to that of
morphisms $Z \to X$ endowed with a section
$$s: g(Y) \times_X Z \to Y \times_X Z $$
for the \'etale map $Y \times_X Z  \to g(Y) \times_X Z$.

\end{theorem}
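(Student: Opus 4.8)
The plan is to recast the statement as a representability result: I will show that $X'$ represents the pseudo-functor sending a stack $Z$ of finite type to the groupoid of pairs $(h\colon Z\to X,\ s)$, where $s$ is a section of the \'etale map $Y\times_X Z\to g(Y)\times_X Z$. By a $2$-categorical Yoneda argument it suffices to exhibit a tautological object over $X'$, giving the comparison functor, and then to check that the induced morphism of groupoids $\Hom(Z,X')\to\{(h,s)\}$ is an equivalence for every $Z$. The tautological section comes from the Cartesian square of the preceding proposition: the identification $g(Y)\times_X X'\cong Y$ together with the embedding $\iota\colon Y\hookrightarrow X'$ (which satisfies $p\circ\iota=g$) produces the graph morphism $Y\to Y\times_X X'$, and, read through $g(Y)\times_X X'\cong Y$, this is a section $s_{\mathrm{univ}}$ of the projection $Y\times_X X'\to g(Y)\times_X X'$. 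Pulling the pair $(p,s_{\mathrm{univ}})$ back along any $f\colon Z\to X'$ defines the comparison functor, and the task is to prove it is an equivalence.

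The key reduction uses that $p\colon X'\to X$ is \'etale. Hence a morphism $Z\to X'$ is the same datum as a morphism $h\colon Z\to X$ together with a section of the \'etale projection $X'\times_X Z\to Z$. Since $p$ is an isomorphism over $X\smallsetminus g(Y)$, over the open substack $Z\smallsetminus(g(Y)\times_X Z)$ such a section is the canonical one and carries no information. Over the closed substack $g(Y)\times_X Z$ I use the symmetric form $X'\times_X g(Y)\cong Y$ of the isomorphism above to identify
\[
X'\times_X\bigl(g(Y)\times_X Z\bigr)\;\cong\;Y\times_X Z,
\]
under which the structural projection to $g(Y)\times_X Z$ becomes exactly the \'etale map $Y\times_X Z\to g(Y)\times_X Z$ induced by $Y\to g(Y)$. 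Thus restriction to the closed locus sends a section of $X'\times_X Z\to Z$ to a section $s$ as in the statement, and this is the functor in the reverse direction.

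It remains to prove that restriction to $g(Y)\times_X Z$ is an equivalence of groupoids, i.e.\ that every section $s$ over the closed locus extends, essentially uniquely, to a section of $X'\times_X Z\to Z$ agreeing with the canonical section away from $g(Y)$. This is where I expect the real work to lie, and where the explicit shape of $R'$ enters. Working \'etale-locally through the presentation $[R'\rightrightarrows U]$ of $X'$ and the atlas $\tilde Z:=Z\times_X U\to Z$, a section corresponds to a lift $\tilde Z\to U$ whose transition datum $\tilde Z\times_Z\tilde Z\to R$ factors through $R'$; the section $s$ is precisely the data selecting, along $g(Y)\times_X Z$, one of the sheets $V_1,V_2$ of $V\to Y$. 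The deletion of $S_{12}$, $S_{21}$ and of $S_{22}\smallsetminus S_{11}$ in the definition of $R'$, together with the intersection identities of Proposition \ref{etale} now read relatively over $Z$, is exactly what forces this sheet-selection to propagate consistently across overlaps and to agree with the canonical choice on the open part, while the non-separatedness of $p$ is what accounts for the genuinely distinct global sections produced by distinct choices $s$. The main obstacle is therefore the gluing (existence) step, which amounts to re-running the closure-under-multiplication argument of Proposition \ref{etale} in the family over $Z$; once this is in hand, compatibility of the correspondence with morphisms $Z_1\to Z_2$ and its inverse relation to pullback of $s_{\mathrm{univ}}$ are formal base-change bookkeeping.
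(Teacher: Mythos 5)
Your setup is sound and in fact parallels the paper's: the tautological section obtained from the graph of $Y\hookrightarrow X'$ together with $g(Y)\times_X X'\cong Y$ is exactly the paper's forward functor, and translating a lift $Z\to X'$ into the condition that the transition morphism $R_Z\to R$ factors through $R'$ is also how the paper proceeds. But your write-up stops precisely at the theorem's actual content: you state that the existence step ``is where I expect the real work to lie'' and call it ``the main obstacle,'' i.e.\ you do not prove that a section $s$ of $Y\times_X Z\to g(Y)\times_X Z$ produces a factorization $R_Z\to R'\to R$. That is a genuine gap, not bookkeeping. Moreover, the tool you point to for filling it --- ``re-running the closure-under-multiplication argument of Proposition~\ref{etale} in the family over $Z$'' --- is the wrong one: that argument establishes that $R'$ is stable under composition (i.e.\ that $X'$ exists as a stack); what is needed here is a \emph{construction} of a map into $R'$ out of the data $(Z\to X,\,s)$, and multiplication-closure does not supply it.

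The mechanism the paper uses, and which is absent from your sketch, is the retraction $g\colon V\to V_1$ with $g\circ i_1=\mathrm{id}_{V_1}$, which exists only because the local embedding is \'etale on its image (so that, by \'etale lifting, $V_1=g(Y)\times_X U$). Representing $s$ by a groupoid map $V_1\times_U U_Z\to V\times_U U_Z$, $V_1\times_U R_Z\to V\times_U R_Z$, the paper obtains the needed factorization over the only locus where it is nontrivial as the composite
$$V_1\times_{U}R_Z \to V \times_{U}R_Z \to V \times_{U}R\cong V\times_YV \to V_1\times_YV \cong  V \times_{U}R' \to V_1\times_UR'\cong V_1\times_YV_1,$$
where the first arrow is the section datum and the arrow $V\times_YV\to V_1\times_YV$ is induced by the retraction; this is what makes your ``sheet selection'' propagate consistently. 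I would also caution against your open-closed framing (``every section $s$ over the closed locus extends, glued with the canonical section away from $g(Y)$''): for a non-separated \'etale cover, a section over a closed substack and a section over its open complement do not automatically glue --- the compatibility on the punctured neighborhood of $g(Y)\times_X Z$ is exactly what is at stake, and it is encoded not by a literal gluing but by the condition that $R_Z\to R$ lands in $R'$, a condition on all of $Z$ at once. So: correct frame, same strategy as the paper in outline, but the crux is deferred rather than proved, and the hint you give for completing it would not complete it.
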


\begin{proof}

Indeed, at the level of objects, given a map $Z \to X_Y$ from a
scheme $Z$ of finite type  to $X_Y$, and its composition with the
\'etale map $X_Y \to X$, there is an induced isomorphism
  $$ g(Y) \times_X Z \cong (g(Y) \times_X X_Y) \times_{X_Y} Z \cong Y \times_{X_Y}
  Z,$$
and a natural map $Y \times_{X_Y}Z \to Y \times_{X} Z$ which, when
composed with the \'etale map $g_Z: Y \times_{X} Z \to g(Y) \times_X
Z \cong Y \times_{X_Y} Z$, yields the identity map.

Consider now a scheme of finite type $Z$ and a morphisms $Z \to X$
represented by a morphism of groupoids
$$   \begin{array}{ccc}  R_Z & \longrightarrow  & R \\
\downdownarrows &  &\downdownarrows \\
                U_Z  & \longrightarrow & U
\end{array} $$
 Assume that there is a section $s: g(Y) \times_X Z \to Y
\times_X Z $ of $g_Z: Y \times_{X} Z \to g(Y) \times_X Z$,
represented by the groupoid map
$$   \begin{array}{ccc} V_1\times_{U}R_Z & \longrightarrow  & V \times_{U}R_Z \\
\downdownarrows &  &\downdownarrows \\
               V_1\times_{U} U_Z  & \longrightarrow & V \times_{U}
               U_Z.
\end{array} $$
 Then the map $ R_Z \to R$ factors through $ R_Z \to R'\to R$. Indeed, as the
restrictions of $R$ and $R'$ to $U\setminus \Im V_1$ coincide, it is
enough to show the factorization of groupoid maps
$$   \begin{array}{ccccc} V_1\times_{U}R_Z & \longrightarrow  & V_1 \times_{U}R' & \longrightarrow  & V_1 \times_{U}R \\
\downdownarrows &  &\downdownarrows &  &\downdownarrows \\
               V_1\times_{U} U_Z  & \longrightarrow & V_1 & \longrightarrow & V_1.
\end{array} $$
   The map $V_1\times_{U} U_Z \to V_1$ is the composition of  $V_1\times_{U} U_Z \to V \times_{U} U_Z$ above with the natural maps $V \times_{U} U_Z \to V \to V_1$, while
     $V_1\times_{U}R_Z \to V_1 \times_{U}R'$ is the composition
   $$      V_1\times_{U}R_Z \to V \times_{U}R_Z \to V \times_{U}R\cong V\times_YV \to V_1\times_YV \cong  V \times_{U}R' \to V_1\times_UR'\cong V_1\times_YV_1 .$$
 Together these maps induce the desired morphism $Z \to X_Y$.
A direct check shows that the constructions above are functorial, and that the two functors constructed between the category $\mbox{ Hom } (Z, X_Y)$ and that of morphisms $Z\to X$ with the extra property specified in the hypothesis are inverse to each other.
\end{proof}

\begin{remark}
Although the embedding $V_1 \hookrightarrow V$ does induce morphisms $V_1\times_{U} U_Z \to V \times_{U} U_Z$ and $V_1\times_{U}R_Z \to V \times_{U}R_Z$, together these induced morphisms do not in general form a morphisms of groupoids, as they are not necessarily compatible with the multiplication on $R_Z$. In particular, when $Z=X$, our construction does not imply the existence of a section for $ Y\to g(Y)$, or for $X_Y\to X$.
\end{remark}

\begin{corollary} \label{transitive}
a) Let $g:Y\to X$ be a morphism \'etale on its image, and let $Z \hookrightarrow Y$ be a closed embedding such that $g_{|Z}$ is proper and $Z\cong g(Z)\times_X Y$. Then there exists a natural morphism $g': Y \to X_Z$ \'etale on its image.

b) Furthermore, if $g$ is proper, then $g'$ is proper as well and
\bea (X_Z)_Y \cong X_Y.\eea
\end{corollary}

\begin{proof}  A canonical section \bea  s: g(Z) \times_X Y \to  Z \times_X Y\eea is given by the diagonal morphism $Z \to Z\times_X Z$ via the isomorphisms $ g(Z)\times_X Y\cong Z$ and $Z\times_X Z\cong Z\times_X Y$. By the previous theorem, this induces a morphism $g':Y \to X_Z$, making the triangles in the following diagram commutative.
\bea  \diagram  & {X_Z }  \rto   & X\\ Z \urto \rto & Y \uto^{g'} \urto_g \enddiagram \eea
 Also, $g'(Y)\cong X_Z\times_X g(Y)$. Indeed, by Proposition \ref{Y cong g(Y)xX}, $ g(Z)\times_XX_Z\cong Z \cong g'(Z)$, while $X\setminus g(Z) \cong X_Z\setminus g'(Z)$.

 b) Assume $g$ is proper. As a consequence of the relation $g'(Y)\cong X_Z\times_X g(Y)$, $g'$ is closed. It is separated, because $g$ is. Moreover, $g'$ satisfies the valuative criterium of properness (\cite{vistoli}). Indeed, consider a valuation ring $R'$ with fraction field $K'$, and a commutative diagram
 \bea  \diagram  {\Spec K'} \rto \dto & {\Spec R'}\dto^{ r'} \\  Y \rto^{g'} & {X_Z}.   \enddiagram \eea
Since the composition $Y\to X_Z\to X$ is a proper morphism, there exists a finite extension $K$ of $K'$ such that, for the integral closure $R$ of $R'$ in $K$, the composition morphism $\Spec(K)\to Y$ extends to $\phi: \Spec R \to Y$.
Let $r$ denote the composition $\Spec R \to \Spec R' \to X_Z$.   It remains to check that $g'\circ \phi = r$, or, equivalently, via Theorem \ref{universality}, that the section
  \bea g(Z) \times_X \Spec R \to  Z \times_X \Spec R \eea
induced by $\phi$ is obtained from the canonical section $s$, after fiber product with $\Spec R$ over $Y$ via $r$:
\bea g(Z) \times_X Y\times_Y \Spec R  \cong g(Z)\times_X \Spec R  \to  Z \times_X Y\times_Y\Spec R \cong  Z\times_X\Spec R.\eea
  Indeed, this is the case as both morphisms $ g'\circ \phi$ and $r$ give the same morphism when composed with the projection $X_Z\to X$.  This proves the properness of $g'$.
Furthermore, as $g$ is \'etale on its image and $X_Z \to X$ is \'etale, $g'$ will be \'etale on the image as well. Thus we can construct $(X_Z)_Y$ as in Proposition \ref{etale}.

Note that, by the same argument as above with $\Spec R$ replaced by $Y \times_XX_Z$, the triangles in the following diagram are commutative:
\bea  \diagram     {Y \times_XX_Z} \rto^{p_1} \dto_{p_2} & {X_Z} \dto \\
Y \rto \urto^{g'} & {X.}  \enddiagram  \eea
We apply this to show that the functors of $(X_Z)_Y$  and $X_Y$ are equivalent. Indeed, any map $T \to X_Y$ induces a map $ T \to X$ and a section
\bea t:  g(Y)  \times_X T\to  Y \times_X T, \eea
which by restriction yields a section  $g(Z) \times_X T \to  Z \times_X T,$
and thus a map $T \to X_Z$. For the existence of a morphism $T\to (X_Z)_Y$, a section $t': g'(Y) \times_{X_Z} T \to  Y \times_{X_Z} T$ is necessary  and sufficient.
But  \bea & g'(Y)\times_{X_Z} T\cong ( g(Y)\times_X X_Z)\times_{X_Z} T\cong  g(Y)\times_X T, \mbox{ and }& \\
& Y\times_XT \cong   (Y \times_XX_Z)\times_{X_Z}T \cong Y  \times_{X_Z}T,  &\eea
due to the two commutative triangles above, so via these isomorphisms we can identify $t'$ with $t$.
Conversely, any map  $T\to (X_Z)_Y$ induces a map $T\to X_Z$ and a section $t': g'(Y)\times_{X_Z}T \to  Y\times_{X_Z}T$ which yields the section $t$ canonically via the above isomorphisms.

\end{proof}

\begin{corollary}\label{X_Y functorial} Consider any morphism of Noetherian stacks $f: X_1 \to X_2$ and any proper map $g_2:Y_2\to X_2$ \'etale on its image,  and let  $Y_1:= Y_2\times_{X_2}X_1$.
Then there exists a morphism $f':(X_1)_{Y_1} \to {(X_2)_{Y_2}}$ making the following diagram Cartesian
\bea     \diagram      (X_1)_{Y_1} \rto^{f'} \dto & {(X_2)_{Y_2}}  \dto \\
X_1 \rto^{f} & {X_2}. \enddiagram  \eea
 In particular, if $f$ is proper, then $f'$ is proper as well.
\end{corollary}

\begin{proof}
 The induced map $g_1: Y_1\to X_1$ is also \'etale on its image so that $(X_1)_{Y_1}$ satisfying the functorial property in Theorem \ref{universality} exists. Also, for any stack $Z$ with a morphism $Z \to X_1$, there are isomorphisms
 \bea     Y_2\times_{X_2} Z\cong  Y_2\times_{X_2} (X_1\times_{X_1}Z) \cong  ( Y_2\times_{X_2}X_1)\times_{X_1}Z \cong Y_1 \times_{X_1}Z \mbox{ and }  \\
 g_2(Y_2)\times_{X_2} Z\cong  g_2(Y_2)\times_{X_2} (X_1\times_{X_1}Z) \cong  ( g_2(Y_2)\times_{X_2}X_1)\times_{X_1}Z \cong g_1(Y_1) \times_{X_1}Z.
   \eea
Thus any morphism $Z\to (X_1)_{Y_1}$, corresponding to a section $g_1(Y_1) \times_{X_1}Z \to Y_1 \times_{X_1}Z$ by Theorem \ref{universality}, induces a section $g_2(Y_2) \times_{X_2}Z \to Y_2 \times_{X_2}Z$ and thus a morphism $Z\to (X_2)_{Y_2}$. Conversely, any two morphisms  $Z\to (X_2)_{Y_2}$ and $Z\to X_1$ making the diagram
\bea     \diagram      Z \rto^{} \dto & {(X_2)_{Y_2}}  \dto \\
X_1 \rto^{f} & {X_2}\enddiagram  \eea
commutative, factor through a unique morphism $Z\to (X_1)_{Y_1}$.
\end{proof}


\subsection{ The \'etale structure of a local embedding. }
\label{etale cover}

Under certain assumptions, we can find a more precise description for the local \'etale structure of a local embedding.
We recall the following definitions which we will need for our assumptions.


\begin{definition}
A morphism of schemes $g : Y \to X$ is called an equidimensional
morphism if the following conditions hold:
\begin{enumerate}
\item $g$ is a morphism of finite type.
\item The function $\dim_yg^{-1}(g(y))$ is constant for all points of $Y$.
\item Any irreducible component of $Y$ dominates an irreducible component
of $X$.
\end{enumerate}
\end{definition}
As these conditions are stable under \'etale base change, (\cite{ega}, 13.3.8) the notion of equidimensional
morphism extends canonically to morphisms of stacks.

\begin{definition}
Let $Y$ be a Noetherian stack. It is called geometrically unibranch if it has an \'etale atlas $V$ which is geometrically unibranch, meaning that
 for any point $v$  of $V$, the scheme $\Spec \cO^{sh}_{v,V}$ is irreducible, where $\cO^{sh}_{v,V}$ denotes the strict henselization of the local ring of $u$ in $U$.
\end{definition}

\begin{notation} Consider a proper local embedding of Noetherian stacks $g: Y \to X$, with $Y$ reduced. Let $Y_1$ denote the locus of points in $Y$ where the map $g$ is not \'etale on its image. \end{notation}

\begin{proposition} \label{U} Let $g: Y \to X$ be a proper, local embedding of Noetherian stacks. Assume that $Y$ is reduced and geometrically unibranch, that the morphism on the image $Y \to g(Y)$ is equidimensional, and that its degree is equal to a fixed number $d$ at all points of $g(Y\setminus Y_1)$.

There exist an \'etale atlas $U$ of $X$ and closed, isomorphic subschemes $\{ W_l\}_{l\in L}$ of $U$,
with intersections $W_I= \bigcap_{l\in I} W_l$
for each $I \in \cP(L)$, as well as isomorphic subschemes $\{ V_l^{a} \}_{l\in L, a\in A_l:=\{1,...,d\} }$ of $Y\times_XU$, each mapping onto $Y$, with isomorphisms $ V_l^{a} \to W_l$ standing over $g: Y \to g(Y)$, such that, at the level of supports,
 \bea  \begin{array} {ccc}  g(Y)\times_XU= \bigcup_{l\in L} W_l, &  Y\times_XU= \bigsqcup_{l \in L, a\in A_l} V_l^{a}, &  g(Y_1)\times_XU= \bigcup_{l,j\in L; l\not=j} W_{lj}.\end{array}
   \eea
   In particular $Y_1=g^{-1}(\{ p\in X; \mbox{  deg } g_p\geq 2d \})$.

More generally, for integers $k>1$, each $Y_{k-1}:=g^{-1}(\{ p\in X; \mbox{  deg } g_p\geq dk \})$ satisfies
\bea  \begin{array} {ccc}  g(Y_{k-1})\times_XU=\bigcup_{I \in \cP_{k}} W_I  & \mbox{ and }  & Y_{k-1}\times_XU = \bigcup_{I\in \cP_{k}, a\in A_I }V_I^{a},  \end{array} \eea
at the level of supports, where  $\cP_k \subset \cP(L)$ is the set of cardinality $k$--subsets of $L$; for each $I\in \cP_k$ as above, $A_I:=\bigsqcup_{l\in I} A_l$; for $l\in I$ and $a\in A_l$, the scheme $V_I^{a}$ is the preimage of $W_I$ in $V_l^{a}$.


\end{proposition}

\begin{proof}

For any quasi-finite map $f$, we denote by $\mbox{md}(f)$ the maximum degree of $f$:
\bea\mbox{md}(f):= \mbox{max}_{p\in \Im f} \deg f_p.\eea
  The degree function $\deg g_p$ is upper semi-continuous. Let $Z_m$ denote the locus of points in $y\in Y$ where the degree of $g$ reaches its maximum, so that, unless $g$ is \'etale on its image,
\bean \label{degree ineq}  \mbox{md}(g_{|Z_m}) > \deg (g_{|Y\setminus Y_1})=d. \eean

 We will first focus on proving the existence of an \'etale atlas $U^0$ of $X$ and $\{W^0_{l}\}_{l\in L}$, $\{V^{0 a}_{l}\}_{l\in L, a \in A_l}$, such that
 \bea  Y\times_XU^0= \bigsqcup_{l, a\in A_l}V^{0 a}_{l},   \eea
and such that each $V^{0 a}_{l}$ is mapped isomorphically to $W_l^0\subseteq  g(Y)\times_XU^0$,  and \'etale surjectively to $Y$ by natural morphisms. We note that once such an atlas $U^0$ has been found, any other scheme $U'$ mapping \'etale surjectively to $U^0$ will work as well, as we can construct $W'_l$ and $V'_l$ as pullbacks of $W^0_{l}$ and $V^{0 a}_{l}$, respectively.

We proceed by induction on $\mbox{md}(g)$. If $\mbox{md}(g)=1$, then $g$ is an embedding and any \'etale atlas $U^0$ of $X$ would work, with $V^0=Y\times_XU^0$ \'etale atlas of $Y$, embedded in $U_0$. Assume now that $\mbox{md}(g)\geq 2$, and that the statement above can be proven for morphisms of smaller maximum degree.

Consider a commutative diagram like in Lemma 1.19 in \cite{vistoli},
\bea\diagram
{V_1} \dto_{\pi_{V_1}} \rto^{ g_1} & U \dto^{\pi_U} \\
Y \rto^{g} & X,
\enddiagram \eea
such that the vertical morphisms are \'etale and surjective, and $g_1$ is a closed embedding. The induced morphism ${V_1}\to Y\times_XU$ is a closed embedding, and \'etale, and so ${V_1}$ is a union of connected components of $Y\times_XU$. Let $g_U$ denote the morphism $Y\times_XU \to U$. The fibered product $Y\times_XU$ can be split into a disjoint union
                        \bea Y\times_XU = {V_1} \bigsqcup V'\bigsqcup V'',\eea
where $V''$ is the union of all components $V_0 \nsubseteq V_1$ with the property that $g_U(V_0) \subseteq g_U({V_1})$. We note that $Y\times_XU$ is geometrically unibranch, and so its connected components are also irreducible.
Let ${W_1}:= g_U({V_1})$, $W':= g_U(V')$ and $W'':= g_U(V'')$. By construction, $W'$ and $W_1$ are closed subsets of $U$ and none contains an irreducible component of the other.

First we notice that \bean \label{Y1} g(Y_1)= \pi_U( {W_1} \bigcap W'). \eean
 In other words, for any closed point $y$ of $Y$ and for any $v \in {V_1}$ such that $\pi_{V_1}(v)=y$, the map $g$ is \'etale on its image at $y$ if and only if $w:= g_U(v) \in W_1 \setminus W'$. Indeed, if $g$ is \'etale on its image at $y$, then the composition of the natural morphisms
 $ W_1 \cong V_1 \to Y \to g(Y)$, as well as the projection $W_1 \bigcup W' = g(Y) \times_XU \to g(Y)$ are \'etale at $v$, which makes the inclusion $W_1 \to W_1 \bigcup W'$ \'etale at $w=g_U(v)$, so $w \in W_1 \setminus W'$. Conversely, in the natural Cartesian square
 \bea  \diagram
{ Y\times_X(W_1\setminus W')} \rto^{} \dto & {W_1\setminus W'}\dto \dlto^{f_1}\\
 Y \rto & {g(Y)}
 \enddiagram
 \eea
 both the vertical morphisms and $f_1$ are \'etale, making  the upper horizontal map \'etale as well. Thus the restriction of $g$ to $g^{-1}(\pi_U(W_1\setminus W'))$ must be  \'etale.
 Here $f_1$ is the composition $W_1\setminus W' \to W_1\cong V_1 \to Y$.


  Both $g_{U|_{V''}}$ and $g_{U|_{V'}}$ are proper local embeddings. Clearly the definition of $V''$ implies an equation in maximum degrees:
  \bean \label{md''} \mbox{md}(g_{U|_{V''}})<\mbox{md}(g_U). \eean

 Assuming $V''\not=\emptyset$, we note that $g_U(V'')={W_1}$. This is true as $g_U(V'')$ is closed in $U$ due to the properness of $g_U$, and ${W_1}\setminus W' \subseteq g_U( V'')$.
  Indeed, as $W_1 \bigcap W'$ has positive codimension in $W_1$, while $\dim g_U(V'')=\dim  W_1$, it follows that $ g_U(V'') \bigcap ({W_1}\setminus W') \not=\emptyset$. Due to the definition of $V''$, the degree of $g_U$ on this set is at least $2$.  But  equation (\ref{Y1}) and the condition that $\deg g_{|Y\setminus Y_1}$ is constant imply that $\deg g_U$ is constant on ${W_1}\setminus W'$ and as such, at least $2$ at every point there. This can only happen if  ${W_1}\setminus W' \subseteq g_U( V'')$ and therefore, if $g_U(V'')={W_1}$.

 Next we show that for appropriate choices of $U$, there is also an inequality in degrees
 \bean \label{md'} \mbox{md}(g_{U|_{V'}})<\mbox{md}(g_U).\eean
Consider $E$ a connected, closed subscheme of $g(Y)\times_XU$, and assume that the restriction of $g_U$ on  $g_U^{-1}(E)$ is \'etale on its image. Then we can assume that either $E \bigcap {W_1}=\emptyset$ or $E\subseteq {W_1}$. Furthermore, in the
 first case, $U$ can be replaced by $U \setminus E$ without changing ${V_1}$, or any of the properties of $U$ above.

 Indeed, the restriction of $g_U$ on  $g_U^{-1}(E)$ can be split as
\bea  (g_U^{-1}(E)\bigcap ({V_1}\bigsqcup V''))     \bigsqcup  (g_U^{-1}(E)\bigcap V')   \to     (E\bigcap {W_1}) \bigcup (E\bigcap W')=E,\eea
and thus either $E\bigcap {W_1}=\emptyset$, or $E\bigcap W'=\emptyset$ or $E \subseteq {W_1} \bigcap W'.$

 In particular, taking  $E=\pi_U^{-1}(g(Z_m))$, for an appropriate choice of $U$ we may assume that
 \bea  \pi_U^{-1}(g(Z_m)) \subseteq {W_1}.\eea
 This, together with (\ref{degree ineq}), imply that $\mbox{md}(g_{U|_{V'}})<\mbox{md}(g_U).$

 Moreover, the degrees of $g_{U|_{V'}}$ and $g_{U|_{V''}}$ are constant over all points in $W'\setminus W_1$ and $W_1 \setminus W'$, respectively.
 Due to equations (\ref{md''}) and (\ref{md'}), we can now apply the induction hypothesis to $g_{U|_{V'}}$ and $g_{U|_{V''}}$, obtaining two surjective, \'etale morphisms $U'\to U$ and $U''\to U$ and $\{V^{a'}_{l'}\}_{l'\in L'}$,  $\{V^{a''}_{l''}\}_{l''\in L''}$ such that
 \bea  \begin{array}{ll}  V'\times_UU' =\bigsqcup_{l', a'}V^{a'}_{l'},    &  V''\times_UU'' =\bigsqcup_{l'', a''}V^{a''}_{l''},  \end{array} \eea
the maps $V^{a'}_{l'} \to U'$ and $V^{a''}_{l''} \to U''$ are closed embeddings, while $V^{a'}_{l'} \to V'$ and $V^{a''}_{l''} \to V''$ are surjective, \'etale. Consider now $U^0=U'\times_UU''$, as well as $V^{0 a'}_{l'}:=V^{a'}_{l'}\times_{U'}U^0$ and $V^{0 a''}_{l''}:=V^{a''}_{l''}\times_{U''}U^0$. Finally, let $V^0_1=V_1\times_UU^0$ and $L=L'\sqcup L''\sqcup\{1\}$. The schemes $\{V^{0 a}_{l}\}_{l\in L}$ thus obtained satisfy
\bea  Y\times_XU^0= \bigsqcup_{l, a\in A_l}V^{0 a}_{l}   \eea
and each $V^{0 a}_{l}$ is naturally embedded into $U^0$.

The surjectivity of the maps $V^{0 a}_{l}\to Y$ remains to be established. Since $V^{0 a'}_{l'} \to V'$ and $V^{0 a''}_{l''} \to V''$ are known to be surjective from above, this reduces to showing that both $V'$ and $V''$ map surjectively to $Y$, (unless $V''=\emptyset$).

 Now, if for any  closed point $y_1$ of  $Y\setminus Y_1$ we have $(\{y_1\}\times_XU)\bigcap V'=\emptyset$, then consider $v_1\in (\{y_1\}\times_XU)\bigcap {V_1}$  and replace $U$ by $U\bigsqcup U_{g_U(v_1)}$, where  $U_{g_U(v_1)}\subset U\setminus W'$ is an open neighboorhood of $g_U(v_1)$ in $U$. Thus $V'$ is replaced by $V' \bigsqcup g_U^{-1}(U_{g_U(v_1)})$, while the maximum degree of $g_U$ remains unchanged. Since $Y$ is Noetherian,  we may thus assume that all points in $Y\setminus Y_1$ have  preimages in $V'$. They have preimages in $V''$ as well, as long as $V''\not=\emptyset$.
 Indeed, if $\deg g_p\geq 2$ for any $p\in g(Y)$, then for any $y_1$ closed point in  $Y\setminus Y_1$, choose $y_2\in Y$ such that $g(y_1)=g(y_2)$, and $v_2\in (\{y_2\}\times_XU)\bigcap {V_1}$. Then since $g_{U_{|V_1}}$ is injective, $\emptyset\not=\{y_1\}\times_X\{ g_U(v_2) \} \subset V''$.

If $y_1\in Y_1$, we can prove $(\{y_1\}\times_XU)\bigcap V'\not=\emptyset$ due to the structure of $g(Y)\times_XU$ around $\{g(y_1)\}\times_XU$. For this, first let $y_2\in Y_1$ be such that $g(y_2)=g(y_1)$, and $v_2\in (\{y_2\}\times_XU)\bigcap V'$ (conform relation (\ref{Y1})). Consider a valuation ring $R_1$ and a map $T_1=\Spec R_1 \to Y$, taking the closed point to $y_1$ and the generic point $(0)$ to $z_1\in Y\setminus Y_1$. Construct another map $T_2=\Spec R_2 \to Y$ taking the closed point of the valuation ring $R_2$ to $y_2$ and the generic point  to $z_2\in Y\setminus Y_1$, such that $g(z_1)=g(z_2)$ (such a map exists because $g$ is proper), as well as $T'_2=\Spec R'_2 \to V'$ taking the closed point to $v_2$ (since $V'\to Y$ is \'etale). Let $C_2$ denote its image in $U$. Then the generic point of $\ol{\{z_1\} }\times_X C_2$ must be in $V'$ and by specialization, we get a point in $V'$ whose image in $Y$ is $y_1$.

  The same argument works for $V''$ as well. With this, the proof that $V'$ and $V''$ map surjectively to $Y$ is complete.

   Consider now the \'etale atlas $U^0$ of $X$ constructed above and let $W^0_l$ denote the image of $V^{0 a}_{l} \to U$ for each $l$.
    The arguments in the proof of relation (\ref{Y1}), when applied successively to $g_{U^0}$ and its restrictions to each $Y_k\times_XU_0$,     yield
\bea  g(Y_k)\times_XU^0 = \bigcup_{I \in \cP_k} W^0_I.   \eea
However, each $ W^0_I$ may not necessarily map surjectively to $g(Y_k)$. To adjust this, we consider the permutation group $S_L$ of $L$ and relabel \bea U:=\bigsqcup_{\sigma\in S_L} U^0, \mbox{ while } W_l:= \bigsqcup_{\sigma\in S_L} W^0_{\sigma(l)} \mbox{  and }   V^a_l:= \bigsqcup_{\sigma\in S_L} V^{0 a}_{\sigma(l)} \mbox{ for each } l\in L, a\in A_l,\eea
(for fixed bijections between the sets $A_l$). Then $U$ satisfies all the conditions required in the Proposition. Moreover, we note that now $W_l\cong W_j$ for any $l$ and $j$ as above.


\end{proof}


\begin{definition} \label{notations for covers}
Consider a proper local embedding of Noetherian stacks $g: Y \to X$. Assume that there exists  an \'etale atlas $U$ of $X$ with all the properties listed in Proposition \ref{U}.
With the notations from the same Proposition, let
\bea V:=Y\times_XU=\bigsqcup_{i,a}V_i^a \mbox{ and } V_i:= \bigsqcup_aV_i^a\eea for each $i \in L$, where $a\in A_i$. For any $i,j\in L$, $a\in A_i$ and $b\in A_j$,  we denote \bea  S_{ij}^{ab}:= \Im ( V_i^a \times_Y V_j^b \to W_i \times_X W_j).\eea
 For some fixed  $i\in L$ and $a\in A_i$, we define $R_{Z,i}:=\bigsqcup_{b\in A_i} S_{ii}^{ab}$.
\end{definition}

\begin{lemma}
 With the notations from Definition \ref{notations for covers}, $R_{Z,i}$ does not depend on the choice of $a$.
\end{lemma}

 \begin{proof} For any $i,j\in L$, $a,c\in A_i$ and $b\in A_j$,  the sequences of Cartesian diagrams
 \bea \diagram {(V_i^a\times_Y V_j^b)\times_{U\times_XU}(V_i^c\times_Y V)} \rto \dto & {V_i^c\times_Y V} \rto \dto & V_i^c  \dto \\
 V_i^a\times_Y V_j^b \rto & { U\times_XU} \rto & U \enddiagram
 \eea
yield a canonical isomorphism $F: V_j^b \times_Y V_i^a\times_{U}V_i^c   \cong  (V_i^a\times_Y V_j^b)\times_{U\times_XU}(V_i^c\times_Y V)$. These spaces are also isomorphic to $(V_i^a\times_U V_i^c)\times_{Y\times_XY} (V_j^b \times_U V)$ over $V$, e.g. due to the sequence of Cartesian diagrams
\bea \diagram {(V_i^a\times_U V_i^c)\times_{Y\times_XY}(V_j^b\times_U V)} \rto \dto & {V_j^b\times_U V} \rto \dto & V_j^b  \dto \\
 V_i^a\times_U V_i^c \rto & { Y\times_XY} \rto & Y. \enddiagram
 \eea
 \bean \label{indep2} (V_i^a\times_Y V_j^b)\times_{U\times_XU}(V_i^c\times_Y V_j)\cong (V_i^a\times_U V_i^c)\times_{Y\times_XY}(V_j^b\times_U V_j)\eean
 contains the image of  $(V_j^b\setminus \bigcup_{k\not=j}V^b_{jk})\times_Y V_i^a\times_{U}V_i^c$ through $F$ and, as the codimension of $\bigcup_{k\not=j}V^b_{jk}$  in $V_j^b$ is at least 1,  $F$ can also be understood as
 \bean \label{indep of upper indices} V_j^b \times_Y V_i^a\times_{U}V_i^c  \cong  (V_i^a\times_Y V_j^b)\times_{U\times_XU}(V_i^c\times_Y V_j).\eean  In particular, when $i=j$, this yields $\bigsqcup_{b\in A_i} S_{ii}^{ab}= \bigsqcup_{b\in A_i} S_{ii}^{cb}$ for any $a, c\in A_i$.

\end{proof}

\begin{lemma} \label{split into etale and generically deg. 1}
Let $g: Y \to X$ be a proper local embedding of Noetherian stacks,   and assume that the degree of $g$ is the same at all points of $g(Y\setminus Y_1)$.
 Assume that there exists  an \'etale atlas $U$ of $X$ satisfying the conditions in Proposition \ref{U}. Then there exists a stack $Z$, an \'etale morphism $f: Y\to Z$ and a proper local embedding $h: Z\to X$ of generic degree 1, such that $g=h\circ f$.

Moreover, with the notations from Proposition \ref{U}, \bea \bigsqcup_{l\in L} W_l\cong  Z\times_XU.\eea
\end{lemma}

\begin{proof}
 The restrictions to $R_{Z,i}$ of the two natural projections on $W\times_XW \rightrightarrows W$ yield a groupoid scheme $ R_{Z,i} \rightrightarrows W_i$. Indeed, the identity $e: W_i \to S_{ii}^{aa}\subset R_{Z,i}$ is induced by the diagonal $V_i^a \to V_i^a\times_YV_i^a$, and there are natural inverse and multiplication maps, built from $i: S_{ii}^{ab}\to S_{ii}^{ba}$ and  $m:  S_{ii}^{ab}\times_{U}S_{ii}^{bc} \to S_{ii}^{ac} $, the last of which can be identified with the projection on the first and third factors $V_i^a\times_Y V_i^b \times_Y V_i^c \to V_i^a\times_YV_i^c$. We will denote by $Z$ the Deligne-Mumford with groupoid presentation $ \left[ R_{Z,i} \rightrightarrows W_i \right]$.

 Due to (\ref{indep of upper indices}), the following is a Cartesian diagram of groupoid schemes,
\bea    \begin{array}{ccc}  \bigsqcup_{c,d} V_i^c\times_YV_i^d & \longrightarrow  & R_{Z,i} \\
\downdownarrows &  &\downdownarrows \\
              \bigsqcup_{c} V_i^c    & \longrightarrow & W_i,
\end{array}  \eea
proving the existence of an \'etale morphism $f:Y\to Z$.

There is also a natural morphism of groupoid schemes from $ R_{Z,i} \rightrightarrows W_i$ to $W_i\times_XW_i  \rightrightarrows W_i$, which is moreover an isomorphism over $W_i\setminus (\bigcup_{j\not=i}W_j)$. This yields $g: Z\to g(Y) \hookrightarrow X$, generically one-to-one.

 It remains to construct an \'etale map $\bigsqcup_jW_j \to Z$ and prove that \bea Z\times_XU \cong Z\times_{g(Y)}(g(Y)\times_XU)\cong Z\times_{g(Y)}(\bigcup_{j}W_j)\cong \bigsqcup_{j}W_j. \eea Let $W:=\bigcup_{j}W_j$. As $W_i\times_XW \cong V_i^a\times_XU\cong V_i^a\times_Y(Y\times_XU) \cong \bigsqcup_{j,b} V_i^a\times_YV_j^b,$ the projection from $W_i\times_XW$ to the second term factors through $p_2: W_i\times_XW \to \bigsqcup_jW_j$. Moreover, $p_2$ is \'etale and surjective. Next, we check the existence of a canonical isomorphism of groupoids
\bean  \label{W_itimesW}  \begin{array}{ccc}  (W_i\times_XW)\times_{\bigsqcup_jW_j}(W_i\times_XW)  & \longrightarrow  & \bigsqcup_bS_{ii}^{ab}\times_XW \\
\downdownarrows &  &\downdownarrows \\
             W_i\times_XW     &  \longrightarrow & W_i\times_XW,
\end{array}
 \eean
which, after descent, will induce the map $\bigsqcup_jW_j \to Z$. Equation (\ref{W_itimesW}) follows from a sequence of isomorphims
\bea  &  (W_i\times_XW)\times_{\bigsqcup_jW_j}(W_i\times_XW) \cong (V_i^a\times_YV)\times_{\bigsqcup_jW_j}(V_i^e\times_YV) \cong &  \\
&\cong \bigsqcup_{j,c,d} (V_i^a\times_YV_j^c)\times_{U}(V_i^e\times_YV_j^d)  \cong  \bigsqcup_{j,c,d} V_i^a\times_Y\left( (V_j^c\times_{U}V_j^d)\times_{Y\times_XY}(V_i^e\times_UV_i) \right) & \\
&  \cong \bigsqcup_{j,c}V_i^a\times_Y\left( (V_j^c\times_{U}V)\times_{Y\times_XY}(V_i^e\times_UV_i) \right) \cong \bigsqcup_{j,c}V_i^a\times_Y(V_j^c\times_{Y}(V_i^e\times_UV_i)) &\eea
(based on isomorphisms (\ref{indep2}) and (\ref{indep of upper indices})), and furthermore
  \bea  \cong \bigsqcup_{j,c}V_i^a\times_Y(V_j^c\times_{Y}(V_i^e\times_{W_i}V_i)) \cong V_i^a\times_Y V\times_{Y}V_i \cong \bigsqcup_bS_{ii}^{ab}\times_XW.
\eea
  Finally, we note that $(\bigsqcup_jW_j)\times_XW\cong (\bigsqcup_jW_j)\times_U(U\times_XU)\rightrightarrows \bigsqcup_jW_j$ is the pull-back on  $\bigsqcup_jW_j$ of the groupoid presentation of $X$, and that the fibered product via $p_2$ on the left, and then $p_1: W_i\times_XW \to W_i\to \bigsqcup W_j$ on the right end,
  \bea  & (W_i\times_XW)\times_{\bigsqcup_jW_j}(\bigsqcup_jW_j\times_XW)\times_{\bigsqcup_jW_j}(W_i\times_XW)\cong &\\
  &\cong W\times_X (W_i\times_XW)\times_{\bigsqcup_jW_j}(W_i\times_XW) \cong &\\
  &\cong  W\times_X (\bigsqcup_bS_{ii}^{ab}) \times_XW \cong (W_i\times_XW)\times_{W_i}(\bigsqcup_bS_{ii}^{ab})\times_{W_i}(W_i\times_XW) \eea
give alternative groupoid presentations of $Z$, and so $Z\times_XU\cong \bigsqcup_{j}W_j.$

\end{proof}

\begin{definition}\label{network} Let $g: Y \to X$ be a proper local embedding of Noetherian stacks,  and assume that the degree of $g$ is the same at all points of $g(Y\setminus Y_1)$. Assume that there exists  an \'etale atlas $U\to X$ satisfying the properties from Proposition \ref{U}. The network of local embeddings of $g$ and $U$  is a set of stacks $\{Y_I\}_{I\in \cP(L)}$ and morphisms
$\phi_J^I: Y_J \to Y_I$ for each pair $I\subseteq J$, $I \in \cP_i$ and $J\in \cP_j$, constructed as follows:
\begin{enumerate}
\item If $g$ factors through $f:Y\to Z$ \'etale and $h:Z\to X$ of generic degree one on its image, like in the previous Lemma, then we define $Y_I:=Z_I\times_ZY$, where the network $\{ Z_I,  \varphi_J^I\}$ of $Z$ is constructed as below. The morphisms  $\phi_J^I: Y_J \to Y_I$ are also obtained by pull-back from the network of $Z$.
\item If $g$ is generically one-to-one on its image, then:
\end{enumerate}
 Let $Y_{\emptyset}:=X$ with the given presentation $ \left[ R_{\emptyset}:= U\times_XU \rightrightarrows U \right]$. When $I=\{ i\} \in \cP_1$, then $Y_i\cong Y$, having thus a groupoid presentation $R_i \rightrightarrows V^a_i $ where \bea R_i = V^a_i\times_YV^a_i\cong W_i\times_XW_i\setminus \bigcup_{j\not=i} S_{ij}^{ab}, \eea
for $ S_{ij}^{ab}= \Im ( V_i^a \times_Y V_j^b \to W_i \times_X W_j)$. (In this case the indices $a$ and $b$ are uniquely associated to $i$ and $j$, respectively, but we keep employing upper indices as for $i, j\in I$, we can thus discriminate between $V_i^a \supseteq V_I^a\not=V_I^b\subseteq V_j^b$).

  For any  $I \in \cP_k$,  define $Y_I$ as the stack of groupoid presentation
\bea  \left[  R_I = (\prod_{i\in I})_{R_{\emptyset} } R_i \rightrightarrows   V_I^a
  \cong W_I =( \prod_{i\in I} )_{U}W_i \right], \eea
  where $(\prod)_{R_{\emptyset} }$ denotes the fiber product over  $R_{\emptyset}$, the groupoid structure is induced from  $ \left[ R_i \rightrightarrows V_i^a \cong W_i \right]$, and $a$ is the index associated to some $i\in I$.
\end{definition}

  For any $K, J$ there is a natural isomorphism $R_{J \cup K} \cong R_J\times_{R_{J\cap K}}R_K$.  By induction
  we obtain that for a fixed index $a$,
  \bean \label{R_I} R_I = V_I^a\times_{Y_I} V_I^a \cong W_I\times_X W_I \setminus \bigcup_{j\not= i \in I} S_{ij}^{ab}. \eean

  The morphisms $\phi_J^I: Y_J \to Y_I$ for $J\supseteq I$ correspond to the natural morphism between the groupoid presentations
  $ \left[  R_J \rightrightarrows V^a_J \right]$ and $\left[  R_I \rightrightarrows V^a_I \right]$. In particular, $\phi_I^I=\mbox{id}_{Y_I}$.

 \begin{notation} If $I$ is a set and $h\not\in I$, we will write $Ih:= I \cup \{ h\}$.

 Alternatively, the objects $Y_I$ and the morphisms $\phi_J^I: Y_J \to Y_I$ are uniquely defined by the following Lemma.

 \begin{lemma} \label{fibered product} For any $I$, $K\in \cP_k$ and $b\in A_K$, there exists a natural \'etale map $V^b_K \to Y_I$ and
  \bean \label{fibered product1} Y_{Ih }\times_{Y_I} V_K^b \cong \bigsqcup_{j\in L\setminus K} V_{Kj}^b. \eean
  More generally, for all
$J\in \cP_l$ and $I\in \cP_k$ such that $J\supset I$,
\bean \label{fibered product2} Y_J\times_{Y_I} V_I^a \cong \bigsqcup_{K=K_l\supset K_{l-1} \supset ... \supset K_{k+1} \supset K_{k} =I } V_K^a,   \eean
 for all chains $K_l\supset K_{l-1} \supset ... \supset K_{k+1} \supset K_{k} =I$ with $K_s\in \cP_s$ for each $s$. When $J=\emptyset,$
 \bean \label{fibered product3} Y_J\times_{X} U \cong \bigsqcup_{a, K=K_k\supset K_{k-1} \supset ... \supset K_{1} \supset K_{0} =\emptyset } V_K^a, \eean
for $a\in K_1$.
 \end{lemma}

 \begin{proof} It is enough to prove the Lemma in the case when $g$ is generically one-to-one, due to Lemma \ref{split into etale and generically deg. 1} and the construction of the network. Once (\ref{fibered product1}) is known for all $I\in \cP_{k-1}$, the  isomorphisms (\ref{fibered product2}) and (\ref{fibered product3}) follow for all $I\in \cP_{k}$ by successive applications of (\ref{fibered product1}). We will prove (\ref{fibered product1}) by increasing induction on $k$. The case $k=1$ results from the definition of $Y_i$-s. Assume that (\ref{fibered product1}) holds for all $I\in \cP_{k-1}$. Choose $I$ and $K\in \cP_{k}$.

 We will denote $V:=Y\times_XU$ and $V_I:=\bigsqcup_{a, K=K_k\supset K_{k-1} \supset ... \supset K_{1} \supset K_{0} =\emptyset } V_K^a \cong Y_I\times_{X} U$ (from the induction hypothesis).
  We will construct an \'etale, surjective morphism $\bigsqcup_{j\in L} V_{Kj}^b \to Y_{Ih}$. For this, recall the groupoid presentation $\left[ R_{Ih} \rightrightarrows V^a_{Ih}  \right] $ of $Y_{Ih}$. We note that the first projection $p_1: V^a_{Ih}\times_{Y_I}V_K^b \to V^a_{Ih}$ is \'etale and surjective. We will prove the existence of another \'etale, surjective morphism
 \bean  \label{moph of groupoids} p_2:   V^a_{Ih}\times_{Y_I}V_K^b = V^a_{Ih}\times_{Y_I}\bigcup_jV^b_{Kj} \to \bigsqcup_{j\in L} V_{Kj}^b,\eean
 and a morphism of groupoid schemes
 \bean \label{R_{Ih}} \begin{array}{ccc}  (V^a_{Ih}\times_{Y_I}V_K^b)\times_{\bigsqcup_jV^b_{Kj}}(V^a_{Ih}\times_{Y_I}V_K^b) & \longrightarrow  & R_{Ih} \\
\downdownarrows &  &\downdownarrows \\
              V^a_{Ih}\times_{Y_I}V_K^b    & \longrightarrow & V^a_{Ih}.
\end{array} \eean
  Indeed, $V^a_{Ih}\cong W_{Ih}\cong V^a_I\times_U V^c_h$, inducing an isomorphism \bea \varphi: V^a_{Ih}\times_{Y_I}V_K^b \to (V^a_I\times_U V^c_h)\times_{Y_I}V_K^b.\eea As $V_K^b\times_UV\cong V_K^b\times_U (Y\times_XU)\cong V_K^b\times_XY$, there is a sequence of Cartesian diagrams
 \bea  \diagram  {(V_{I}^a\times_{U}V_h^c)\times_{Y_I\times_XY}(V_{K}^b\times_{U}V)}\rto \dto^{\pi_2} & {V_{K}^b\times_{U}V} \rto \dto  &  V_K^b
\dto\\
              {V_{I}^a\times_{U}V_h^c}       \rto &   {Y_I\times_XY }  \rto & {Y_I},
\enddiagram \eea
 giving a canonical isomorphism $F:(V_{I}^a\times_{U}V_h^c)\times_{Y_I}V_K^b \cong (V_{I}^a\times_{U}V_h^c)\times_{Y_I\times_XY}(V_{K}^b\times_{U}V)$. Via the isomorphisms $\varphi$ and $V^b_K\times_U V^d_j \cong V_{Kj}^b$ for any $j$,  we can now define $p_2:  V^a_{Ih}\times_{Y_I}V_K^b \to \bigsqcup_{j\in L} V_{Kj}^b$ as $\pi_2$, which is moreover \'etale.

  Via the isomorphism $F$, the fiber product space $(V^a_{Ih}\times_{Y_I}V_K^b)\times_{\bigsqcup_jV^b_{Kj}}(V^a_{Ih}\times_{Y_I}V_K^b)$ is isomorphic to
  \bea & (V_{I}^a\times_{U}V_h^c)\times_{Y_I\times_XY}(V_{K}^b\times_{U}V)\times_{\bigsqcup_jV^b_{Kj}}(V_{K}^b\times_{U}V)\times_{Y_I\times_XY}(V_{I}^a\times_{U}V_h^c) \cong &\\
  &   (V_{I}^a\times_{U}V_h^c)\times_{Y_I\times_XY}(V_{K}^b\times_{U}V)\times_{Y_I\times_XY}(V_{I}^a\times_{U}V_h^c)  \cong  &\\ &  (V_{I}^a\times_{U}V_h^c)\times_{Y_I\times_XY}(V_{I}^a\times_{U}V_h^c)\times_{Y_I} V_{K}^b. & \eea
 On the other hand,  as \bea R_{Ih}\cong (V_I^a\times_{Y_I}V_I^a)\times_{U\times_XU}(V^c_h\times_YV^c_h) \cong (V_I^a\times_UV^c_h)\times_{Y_I\times_XY}(V_I^a\times_UV^c_h), \eea there is a natural projection
  \bea (V^a_{Ih}\times_{Y_I}V_K^b)\times_{\bigsqcup_jV^b_{Kj}}(V^a_{Ih}\times_{Y_I}V_K^b)  \longrightarrow   R_{Ih} \eea
making (\ref{R_{Ih}}) into a morphism of groupoid schemes. Finally, due to the isomorphisms above, the \'etale atlas $V_{Ih}^a\times_{Y_I}V_K^b$ of $Y_{Ih}$ yields a groupoid presentation with relations
\bea  & (V_{Ih}^a\times_{Y_I}V_K^b)\times_{Y_{Ih}}(V_{Ih}^a\times_{Y_I}V_K^b) \cong
 (V_{Ih}^a\times_{Y_I}V_K^b)\times_{V_{Ih}^a}R_{Ih}\times_{V_{Ih}^a}(V_{Ih}^a\times_{Y_I}V_K^b) \cong  &\\
&\cong V_K^b\times_{Y_I}(V_I^a\times_UV^c_h)\times_{Y_I\times_XY}(V_I^a\times_UV^c_h)\times_{Y_I} V_{K}^b \cong &\\
&\cong V_K^b\times_{Y_I}(V^a_{Ih}\times_{Y_I}V_K^b)\times_{\bigsqcup_jV^b_{Kj}}(V^a_{Ih}\times_{Y_I}V_K^b) \cong & \\
&\cong (V^a_{Ih}\times_{Y_I}V_K^b)\times_{\bigsqcup_jV^b_{Kj}}(\bigsqcup_jV^b_{Kj} \times_{Y_I}V_K^b)\times_{\bigsqcup_jV^b_{Kj}}(V_K^b\times_{Y_I}V^a_{Ih}),& \eea
while $(\bigsqcup_jV^b_{Kj} \times_{Y_I}V_K^b) \rightrightarrows \bigsqcup_jV^b_{Kj}$  is the pull-back of $V_K^b\times_{Y_I}V_K^b \rightrightarrows V^b_{K}$. This completes the proof of isomorphism (\ref{fibered product1}).

 \end{proof}

\begin{corollary}  \label{self-similarity} The morphisms $\phi_J^I: Y_J \to Y_I$ for $J\supset I$ are proper local embeddings.

Let $J=I \bigcup\{j\}$ for some $j\not\in I$. If $U$ is an \'etale atlas of $X$ satisfying the properties listed in Proposition \ref{U} for the morphism $Y\to X$, then $W_I\cong V^a_I$ is an \'etale atlas of $Y_I$ satisfying the same set of properties for the morphism $\phi_J^I: Y_J \to Y_I$, and the network of local embeddings associated to the local embedding $\phi_J^I$ with the \'etale atlas $W_I$ consists of $\{ \phi_K^H: Y_K\to Y_H \}_{K\supset H \supseteq J}$.

\end{corollary}
\begin{proof} The properness of the morphisms $\phi_J^I: Y_J \to Y_I$  is a direct consequence of formula (\ref{fibered product2}) and the fact that
$g:Y \to X$ is proper. Formulas (\ref{fibered product1}) and (\ref{fibered product2}) also show that $W_I\cong V^a_I$ satisfies the properties listed in Proposition \ref{U}. Thus for $J=I \bigcup\{j\}$, the network of local embeddings associated to $\phi_J^I$ with the \'etale atlas $V^a_I$ is made out of stacks with \'etale atlases given by $W_K$ for some $K\supseteq J$, and relations  given by
\bea  (\prod_{k\in K\setminus I})_{R_{I}}R_{Ik} \cong R_{ K}, \eea
 due to Definition \ref{network}. The stacks in the network are thus $\{Y_K\}_{K\supseteq J}$.
Accordingly, the morphisms of the network are exactly $\phi_K^H$ with $K\supset H \supseteq J$.

\end{proof}

Although the stacks $Y_I$ are not fibered products of stacks $Y_i$ with $i\in I$, the above arguments show that they can be constructed intrinsically from a succession of fibered products, after removing the diagonal components. In particular, there exists a closed embedding
$$ Y_I \hookrightarrow ( \prod_{i\in I} )_{X} Y_i.$$
More precisely,
\begin{corollary}
Each $\iota_{Ih}:=(\phi_{Ih}^I, \phi_{Ih}^h): Y_{Ih}\hookrightarrow Y_I\times_XY$ is a closed embedding, and
 \bea Y_I\times_XY \cong (\bigsqcup_{h\not\in I, b\in A_h}\Im\iota_{Ih})\bigsqcup (\bigsqcup_{i\in I}\Im (\phi_I^I, \phi_I^i) ),\eea  with the  diagonal morphisms $(\phi_I^I, \phi_I^i) : Y_I \to Y_I\times_XY$ yielding the higher  dimensional components of $Y_I\times_XY$.
\end{corollary}
\begin{proof}
This is also a direct consequence of Lemma \ref{fibered product}, due to the Cartesian diagrams:
 \bea  \diagram  {\bigsqcup_{j\not\in I}V_{Ij}^a}\rto \dto & {V_{I}^a\times_{U}V} \rto \dto  &  V_I^a
\dto\\  {Y_{Ih}}       \rto &   {Y_I\times_XY }  \rto & {Y_I},
\enddiagram \eea
where $V=\bigsqcup_{j, b}V_j^b$,   while $V_{Ij}^a\cong V_{I}^a\times_{U}V_j^b$ for fixed $b\in A_j$.
\end{proof}
\begin{remark} \label{minimality}
 By the previous corollary applied successively to each index $I$, the objects of the network of local embeddings associated to the proper local embedding $g:Y \to X$ and the \'etale atlas $U$ are in fact independent of the choice of atlas satisfying the properties listed in Proposition \ref{U}. The choice of $U$ determines only the number of copies of each $Y_I$ contained in the associated network. For example, by replacing the \'etale atlas $U$ of $X$ with $U \bigsqcup U$ and keeping the choice of \'etale atlases for  $Y$ unchanged, we obtain a network which, apart from the morphisms $Y_i\to X$, is a disjoint union of two copies of the network for $U$.

\end{remark}

\begin{notation}
Let $g: Y \to X$ be a proper local embedding of Noetherian stacks satisfying the assumptions from Proposition \ref{U}.
For any two generic points $\xi_J$ and  $\xi_K$ of $Y_J$ and $Y_K$ respectively, such that $K \supset J$ and such that $\xi_J$ specializes to  $\varphi^J_K(\xi_K)$, we denote by $\left[ \xi_K \to \xi_J \right]$ the degree of $\varphi^J_K$  at $\varphi^J_K(\xi_K)$.
\end{notation}


 For $K\in \cP_k$, $J\in \cP_j$  as above, the following relation follows directly from Lemma \ref{fibered product}:  \bea   \left[ \xi_K \to \xi_J \right] &=&N(K,J) \nu(\xi_K, \xi_J), \eea
where $N(K,J)$ is the number of all maximal chains $K=K_k\supset K_{k-1} \supset ... \supset K_{j+1} \supset K_{j} =J$ and
\bea \nu(\xi_K, \xi_J) =|\{ K'\in \cP_{k}\mbox{; } \exists \xi_{K'} \mbox{ generic point of } Y_{K'} \mbox{ such that } \varphi^J_{K'}(\xi_{K'})=\varphi^J_{K}(\xi_{K}) \}|.\eea

 Thus if $K\in \cP_k$, $J\in \cP_j$ and $I\in \cP_i$ satisfy $K \supset J \supset I$, then by a count of chains
\bean \label{deg} \frac{ [\xi_{K} \to  \xi_{J}][ \xi_{J} \to  \xi_{I} ] }{ [ \xi_{K} \to  \xi_{I}]}=\frac{ |\{ K_k\in \cP_k; K_k \supset I\}| }{|\{ K_k\in \cP_k; K_k \supset J\} | |\{ K_j\in \cP_j; K_j \supset I \} |,}
\eean
for any generic points $\xi_I$, $\xi_J$ and  $\xi_K$ of $Y_I$, $Y_J$ and $Y_K$ respectively, such that $\xi_J$ specializes to  $\varphi^J_K(\xi_K)$ and $\xi_I$ to  $\varphi^I_J(\xi_J).$

\end{notation}

\begin{theorem}\label{X'}

Consider a network of proper local embeddings $\phi_J^I: Y_J \to Y_I$ for
$I\subseteq J$, $I \in \cP_i$ and $J\in \cP_j$, associated to a
proper local embedding $Y\to X$ by Definition \ref{network} under the assumptions of Proposition \ref{U}, where by
convention $Y_{\emptyset }=X$. For each such morphism $\phi_J^{I}$,
there exists a closed embedding of stacks $\phi'^I_J: Y'_J
\hookrightarrow Y'_I$, together with \'etale surjective morphisms $p_J: Y'_J
\to Y_J$ and $p_I: Y'_I \to Y_I$
 making the diagram $$   \diagram     Y'_J  \rto^{ \phi'^I_J } \dto_{ p_J } & Y'_I \dto^{ p_I } \\ Y_J \rto^{ \phi^I_J } & Y_I \enddiagram  $$
commutative, and such that $$ Y_J\times_{Y_I}Y'_I= \bigsqcup_{J'\in \cP_j, J'\supseteq I} Y'_{J'}$$
for $J\in \cP_j$.
\end{theorem}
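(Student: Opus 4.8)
The plan is to prove the theorem by descending induction on the stratification level, starting from the deepest stratum, by equipping each $Y_I$ with an open subgroupoid presentation $[R'_I \rightrightarrows V_I^a]$ of the presentation $[R_I \rightrightarrows V_I^a]$ recorded above, in direct analogy with the passage from $R$ to $R'$ in Proposition \ref{etale}. Starting from $R_I \cong W_I \times_X W_I \setminus \bigcup_{(j,b)\neq(i,a);\, i\in I} S^{ab}_{ij}$, I would define $R'_I$ by further deleting the off-diagonal relations $S^{ab}_{ij}$ that identify the distinguished sheet $V_I^a$ with sheets other than those indexed by the sets $J' \supseteq I$, retaining only the diagonal $\Im e$ and the self-relations of the distinguished cover. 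The deepest stratum furnishes the base case: there the relevant restriction $g_n : Y_n \to g(Y_n)$ is \'etale on its image, so Proposition \ref{etale} and the constructions following it apply to produce the lifts $Y'_I$ for $I\in\cP_n$ together with the \'etale maps $p_I : Y'_I \to Y_I$.

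First I would check that each $[R'_I \rightrightarrows V_I^a]$ is a groupoid. Symmetry and reflexivity hold by construction exactly as in Proposition \ref{etale}, so the content is closure under the multiplication $m$, namely $m(R'_I \times_{V_I} R'_I) \subseteq R'_I$. Here I would reproduce the intersection bookkeeping of Proposition \ref{etale} --- the vanishing of cross-intersections of the form $S^{ab}_{ij}\cap S^{a'b'}_{i'j'}$ and the identification of the traces $S^{ab}_{ij}\cap \Im e$ with diagonal images --- now carried out over the poset $\cP(L)$, using the injectivity of the distinguished lifts $V_I^a \to W_I$ and the Cartesian squares relating the $S^{ab}_{ij}$ along inclusions $I\subseteq J$. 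The flattening stratification is exactly what guarantees that the deleted loci are closed, so that $R'_I$ is open: on each stratum the relevant restriction of $g$ is \'etale, which is precisely the hypothesis under which $S_{22}\setminus S_{11}$ was closed in Proposition \ref{etale}.

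Next I would produce the morphisms and the commuting square. The inclusion of open subgroupoids $R'_I \hookrightarrow R_I$ over $V_I^a$ yields an \'etale morphism $p_I : Y'_I \to Y_I$, as in the proposition following Proposition \ref{etale}. For an inclusion $I\subseteq J$ the fiber-product presentation $R_J = (\prod_{i\in J})_{R_{\emptyset}} R_i$ and the gluing isomorphism $R_{J\cup K}\cong R_J\cup_{R_{J\cap K}} R_K$ restrict to the primed groupoids, since $R'_I$ and $R'_J$ are cut out by the same deletion rule; this produces the lift $\phi'^I_J : Y'_J \to Y'_I$ and the commutativity of the square with $\phi^I_J$, $p_I$, $p_J$. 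Uniqueness of $\phi'^I_J$ and of $Y'_I$ I would deduce from the universality of Theorem \ref{universality} applied to each edge of the network: $Y'_I$ represents morphisms to $Y_I$ equipped with a section of the \'etale cover $V_I^a\to W_I$, and this moduli description pins down both the stacks and the lifted morphisms.

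The heart of the argument, and the step I expect to be the main obstacle, is the decomposition $Y_J\times_{Y_I} Y'_I = \bigsqcup_{J'\in\cP_j,\, J'\supseteq I} Y'_{J'}$ together with the assertion that each $\phi'^I_J$ is an embedding. I would establish both at once by the groupoid pullback computation modeled on the isomorphism $V\times_U R' \cong (V_1\times_Y V_1)\sqcup \phi_{22}^{-1}(S_{11}\cup\Im e_2)$ from the proof following Proposition \ref{etale}: pulling $R'_I$ back along $\phi^I_J$ breaks the surviving relations into blocks indexed precisely by the sets $J'\supseteq I$ of level $j$, the block $J'=J$ being the image of $Y'_J$ and the remaining blocks being disjoint from it. The delicate point is to match this block decomposition with the chain combinatorics recorded above --- the counts $N(k,j)$ and the degree identity \Ref{deg} --- so as to identify each block with the correct $Y'_{J'}$ and, crucially, to see that the deletion rule defining $R'_I$ renders the blocks genuinely disjoint, so that $\phi'^I_J$ is an embedding rather than merely \'etale. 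Carrying this out simultaneously and compatibly across the entire network, rather than for a single embedding as in Proposition \ref{etale}, is where the induction on strata must be organized with care, and is the crux of the proof.
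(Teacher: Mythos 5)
Your overall architecture --- descending induction on the index poset, with each $Y'_I$ presented by an open subgroupoid of $\left[ R_I \rightrightarrows W_I\right]$ obtained by deleting off-diagonal relations, terminating in a presentation of the form $R'_I \cong W_I\times_XW_I \setminus \bigcup_{(j,b)\not=(i,a);\, i\in I} S_{ij}^{ab} \setminus \bigcup_{(k,c)\not=(l,a);\, k,l\not\in I} S_{lk}^{ac}$ --- matches the skeleton of the paper's proof, and your deletion rule agrees with the paper's property (4). But at the inductive step you propose to verify closure of $R'_I$ under multiplication, and the disjointness of the blocks of $Y_J\times_{Y_I}Y'_I$, by redoing the intersection bookkeeping of Proposition \ref{etale} ``over the poset,'' and you yourself flag this as the main unresolved obstacle. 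That is a genuine gap, and it is precisely where the paper inserts an idea your proposal lacks: before lifting to level $I$, the already-constructed stacks $Y'_{I\cup\{h\}}$ are glued pairwise along $Y'_{I\cup\{h,h'\}}$ (via \cite{abramovich}, Proposition A.1.1 and Corollary A.1.2) into a single non-separated network stack $N_I$ with groupoid presentation $\left[\bigcup_{J\supset I}R'_J \rightrightarrows \bigcup_{J\supset I}W_J\right]$, and the compositions $Y'_{I\cup\{h\}}\to Y_{I\cup\{h\}}\to Y_I$ glue to a morphism $N_I\to Y_I$ which is a local embedding \emph{\'etale on its image}. This packages the entire constellation of deeper strata into one morphism satisfying verbatim the hypothesis of Proposition \ref{etale}, which is then invoked as a black box at each level to produce $Y'_I$ together with the embedding $N_I\hookrightarrow Y'_I$; properties (1)--(4), including the decomposition $Y_J\times_{Y_I}Y'_I=\bigsqcup_{J'\supseteq I}Y'_{J'}$, fall out of that single application rather than from a fresh combinatorial verification.

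Without this gluing, or an equivalent device, your direct check does not go through as sketched. The two-sheet analysis in Proposition \ref{etale} relied on the dichotomy $V=V_1\sqcup V_2$ and on a dimension argument for the closedness of $S_{22}\setminus S_{11}$, valid because $g$ was \'etale on its image; in the nested situation, compositions in $m(R'_I\times_{W_I}R'_I)$ can pass through intermediate sheets $V_J^b$ for various $J\supset I$ of different levels, so the pairwise intersection identities you invoke do not reduce to the proven case, and flatness of the strata only gives \'etaleness of each restriction $g_k^o$ on a locally closed stratum, not closedness of the deleted loci at level $I$. Note also a small discrepancy at the base of your induction: the paper sets $Y'_I=Y_I$ for $I$ maximal and first applies Proposition \ref{etale} at level $n-1$, to $N_I=\bigsqcup_{J\supset I}Y_J\to Y_I$, rather than to $g_n$ itself. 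Your appeal to Theorem \ref{universality} for uniqueness is sound and consistent with the paper.
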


\begin{proof}

We construct $Y'_I$ in decreasing order of $I$, with $Y'_I=Y_I$ for $I$ maximal.
 For  $I\in \cP_{n-1}$, the stack $N_I:= \bigsqcup_{J\supset I} Y_J$ comes with a natural map $n_I: N_I \to Y_I$ \'etale on its image, and the \'etale atlas $$\bigsqcup_{J\supset I} W_J\cong n_I(N_I)\times_{Y_I}W_I.$$
  $Y'_I$ is constructed as in Proposition \ref{etale}, such that it admits
 \begin{enumerate}
 \item a surjective \'etale morphism $p_I:Y'_I\to Y_I$,
 \item an embedding $N_I \hookrightarrow Y'_I$ whose composition with $p_I$ is $n_I$,
 \item an \'etale atlas $W_I$ satisfying  $Y'_J\times_{Y'_I}W_I\cong W_J$, and
 \item a groupoid presentation $R'_I \rightrightarrows W_I$ given by
  \end{enumerate}
  $$R'_I \cong W_I\times_X W_I \setminus \bigcup_{i\not=j; i \in I} S_{ij}^{ab} \setminus \bigcup_{k\not= l; k,l \not\in I} S_{lk}^{ac}.$$
 Given any $k<n-1$, $I \in \cP_k$ and assuming the stacks $Y'_J$ with the above properties constructed for all $J\supset I$, the network stack $N_I$ is constructed
out of all stacks $Y'_{ I\cup\{h\}}$, by gluing each pair $Y'_{I\cup\{h\} }$ and $Y'_{I\cup\{h'\} }$  along $Y'_{I\cup\{h, h'\} }$ as in \cite{abramovich}, Proposition A.1.1 and Corollary A.1.2. Accordingly, $N_I$ admits a natural groupoid presentation $\left[ \bigcup_{J\supset I}R'_J \rightrightarrows \bigcup_{J\supset I}W_J\right]$, where the unions are considered inside $U\times_XU$ and $U$, respectively.
From here, the composition maps $Y'_{I\cup\{h\} } \to Y_{I\cup \{h\} }\to Y_{I} $ glue together
to a morphism $n_I: N_I \to Y_{I} $ which is \'etale on its image, as noted from the groupoid presentations of $N_I$ and $Y_I$. Moreover, for every $J\supseteq I$ there exists a canonically defined closed embedding $Y'_J \hookrightarrow N_I$.  If $n_I$ were proper, the construction in Proposition \ref{etale}  applied to $n_I$ would yield the stack $Y'_I:= (Y_I)_{N_I}$ with the desired properties (1)--(4). However, $n_I$ is not necessarily proper, so we will obtain the same construction indirectly. We consider a canonical stratification
\bea  N^n_I \hookrightarrow N^{n-1}_I \hookrightarrow ... \hookrightarrow N^{k+1}_I=N_I \eea
 and a sequence of lifts $(Y_I)_{N_I^l}$, for $0\leq k \leq l \leq n-1$, where  $N^{n}_I=\bigsqcup_{J\supset I, J\in \cP_{n}}Y_J$ has a proper map into $Y_I$, and $N^{k+1}_I=N_I$ has a proper map into $(Y_I)_{N_I^{k+2}}$. This is discussed in the next lemma.

 Although the process described in the lemma is indirect,  the groupoid presentation of the resulting space can be constructed directly as in Proposition \ref{etale}, as indicated by  property (2) in the lemma. Moreover, the \'etale atlas $\bigcup_{J\supset I}W_J$ of $N_I$ is embedded in the \'etale atlas  $W_I$ of $Y_I$, and so the relations $R'_I$ defining  $(Y_I)_{N_I}$ are obtained, via Proposition \ref{etale}, by restricting the preimage  of  $\bigcup_{J\supset I}W_J$ in $R_I$ such that  $R'_{I|_{\bigcup_{J\supset I}W_J}}$  coincides with the image of $(\bigcup_{J\supset I}R'_J)$ in $R_I$.  This directly yields the presentation given at point (4) above.

At the final step, the stack $X'$ admits a groupoid presentation $\left[ R' \rightrightarrows U\right]$, with
 \bean \label{relations_for_X'} R' = U\times_XU \setminus \bigcup_{i\not=j}S_{ij}^{ab}.\eean

 \end{proof}

\begin{lemma}
With the notations above, let $k$ and $l$ be any integers such that $0\leq k \leq l \leq n-1$, and let any $I\in \cP_k$.  Define
\bea N_I^l:= \Im ( \bigsqcup_{J\supseteq I, J\in \cP_l} Y'_J  \to N_I ). \eea Then there exists a sequence of \'etale, surjective morphisms
\bea  Y'_I=(Y_I)_{N_I^{k+1}} \to (Y_I)_{N_I^{k+2}}\to ...\to (Y_I)_{N_I^{n-1}} \to (Y_I)_{N_I^{n}} \to Y_I, \eea
and morphisms $n^l_I: N_I=N^{k+1}_I \to (Y_I)_{N^l_I}$  for each $l$, \'etale on their images, such that the following properties hold:
\begin{enumerate}
\item  The restriction of $n^{l+1}_I$ to $N_I^{l}$ is a proper morphism. In particular, for  $l=k+1$,
the morphism $n_I: N_I \to (Y_I)_{N_I^{k+2}}$ is proper.
\item  With the notations from Proposition \ref{etale}, $((Y_I)_{N_I^{l+1}})_{N_I^{l}}\cong (Y_I)_{N_I^{l}}.$
\item For each $J\supset I$, $J\in \cP_s$ and $l> s$, there exist canonical Cartesian diagrams
\bea  \diagram  N^l_J \rto \dto & {N^l_I}\dto \\
  (Y_J)_{N^{l+1}_J} \rto &  {(Y_I)_{N^{l+1}_I}}. \enddiagram   \eea
\end{enumerate}

\end{lemma}

\begin{proof} The construction of $(Y_I)_{N^l_I}$ and the proofs of the properties (1)--(3) are completed by decreasing induction on $l$. When $l=n$, consider $N^{n+1}_I=\emptyset$ and $(Y_I)_{N^{n+1}_I}=Y_I$.  The map
\bea n^n_{I|_{N^{n}_I}}: N^{n}_I=\bigsqcup_{J\supset I, J\in \cP_{n}}Y_J \to Y_I \eea
is proper and \'etale on its image, so  $(Y_I)_{N_I^{n}}$ is constructed like in Proposition \ref{etale}. As $n_I: N_I \to Y_I$ is \'etale on
 its image and ${N^n_I}$ has a close embedding in $N_I$ satisfying the conditions of Corollary \ref{transitive} a), the map $n_I$ lifts to $n^n_I: N_I \to (Y_I)_{N_I^{n}}$.
Furthermore, for all $J\in \cP_{l}$, $l\geq k$, the Cartesian diagram
\bea  \diagram  {N^n_J} \rto \dto & {N^n_I}\dto \\
                  Y_J \rto &  Y_I\enddiagram   \eea
induces a proper morphism $(Y_J)_{N^n_J} \to (Y_I)_{N_I^{n}}$ as in Corollary \ref{X_Y functorial}. Gluing all $Y'_J$-s with $J \supset I$, $J\in \cP_{n-1}$ gives
$N^{n-1}_I$ with a proper morphism to $(Y_I)_{N_I^{n}}$, the restriction of $n^n_I$. This leads to the next step of induction, with the construction of
$(Y_I)_{N_I^{n-1}}:=((Y_I)_{N_I^{n}})_{N_I^{n-1}}.$
     Assume now the constructions of $(Y_J)_{N^{l'}_J}$, $n^{l'}_J$ and properties (1)--(3) known for all $l'> l>s-1$, $J\supseteq I$, $J\in \cP_s$.
     Then by property (3) and Corollaries \ref{X_Y functorial}, \ref{transitive}, there is a proper morphism
     \bea (Y_K)_{N^{l}_K}:= ((Y_K)_{N^{l+1}_K})_{N^{l}_K} \to  {((Y_J)_{N^{l+1}_J})_{N^{l}_J}}=:{(Y_J)_{N^{l}_J}}  \eea
  for any $K\supset J \supseteq I$.   Gluing the stacks $Y'_K=(Y_K)_{N^{l}_K}$ for all such $K\in \cP_{l-1}$ gives $N^{l-1}_J$ with a proper morphism to $(Y_J)_{N_J^{l}}$. This is the restriction of the morphism $n^l_J: N_J \to (Y_J)_{N^l_J}$ \'etale on its image, which was obtained  from $n^{l+1}_J$ by Corollary \ref{transitive}. Finally, these proper morphisms fit together in Cartesian diagrams like in (3) with  $l$ replaced by $l-1$, as the diagrams
  \bea  \diagram  {\bigsqcup_{K\supset J, K\in \cP_{l-1}}(Y_K)_{N^s_K}} \rto \dto & {\bigsqcup_{K\supset I, K\in \cP_{l-1}}(Y_K)_{N^s_K}}\dto \\
  (Y_J)_{N^{s}_J} \rto &  {(Y_I)_{N^{s}_I}}. \enddiagram   \eea
  are Cartesian for all $s\geq l$, due to Corollary \ref{X_Y functorial} and decreasing induction on $s$.
\end{proof}

\begin{definition}
For a proper local embedding $g: Y\to X$ and an \'etale atlas $U$, the morphism $p=p_{\emptyset} :X'\to X$
 introduced in Theorem \ref{X'} (in the particular case when $I=\emptyset$) will be called the \'etale lift of $g$ with respect to the \'etale atlas  $U$.
\end{definition}

\begin{remark} \label{self-similarity for etale lifts}
We note that with this terminology, the \'etale morphisms $p_I: Y'_I \to Y_I$ introduced in Theorem \ref{X'} are the \'etale lifts of $\phi_J^I$ with respect to  the \'etale atlas $W_I$, for any $J=I\cup\{ j\}$ and $j\not\in I$. Indeed, this is a direct consequence of Corollary \ref{self-similarity}, as all the stacks $N^s_J$ and $Y'_J$ for $J\supseteq I$ constructed in the course of the proof of Theorem \ref{X'} depend only on the network $\{ \phi_K^H: Y_K \to Y_H\}_{K\supset H \supseteq J}$.

\end{remark}

\begin{example} Consider a projective curve $X$ whose singular locus consists of  a simple node $x$, and let $g:Y\to X$ be its normalization, with $g^{-1}(x)=\{y_1, y_2\}$.  Then $X'$ is the union of two copies $Y^1$ and $Y^2$ of $Y$, glued together along $g^{-1}(x)$ such that $y_1^1=y_2^2$ and $y_2^1=y_1^2$.
 \end{example}

\begin{example}\label{example}
Let  $U^0:=\Spec k[x_1,x_2,x_3]$ and consider the action of $\ZZ_3\cong A_3$ on $U_0$ which permutes the coordinates,
\bea \sigma([x_1,x_2,x_3]):=[x_{\sigma(1)}, x_{\sigma(2)}, x_{\sigma(3)}].\eea
Let $V^0:=\Spec k[x_1,x_2] \bigsqcup \Spec k[x_2,x_3] \bigsqcup \Spec k[x_1,x_3]$, with the natural local embedding $g^0:V^0\to (x_1x_2x_3=0) \hookrightarrow U^0$, and the natural action of $\ZZ_3$ on $V^0$ which is compatible with $g^0$. Taking quotients yields a local embedding
 \bea g: \AA^2  \to [ \AA^3/\ZZ_3 ]. \eea
We denote $Y:=\AA^2$,  $X:= [ \AA^3/\ZZ_3 ]$. Then  $U:=\bigsqcup_{\alpha\in S_3}U^0$ is an \'etale atlas of $X$ satisfying all the properties listed in Proposition \ref{U}, where $Y\times_XU=V_1\bigsqcup V_2 \bigsqcup V_3$ and $V_i\cong W_i=\bigsqcup_{\alpha\in S_3}(x_{\alpha(i)}=0)\hookrightarrow \bigsqcup_{\alpha\in S_3}\AA^3$. The associated network of local embeddings will thus be of the form
\bea  \diagram
 & Y_{12} \rto \drto & Y_{1}  \drto &\\
 Y_{123} \urto \rto \drto & Y_{31} \urto \drto & Y_2 \rto & X, \\
 & Y_{23} \urto\rto & Y_3 \urto &
\enddiagram \eea
where $Y_1\cong Y_2 \cong Y_3 \cong Y=\AA^2$ with the map $g:Y \to X$. The spaces $Y_{12}\cong Y_{31} \cong Y_{23} = \AA^1\bigsqcup \AA^1$, and  $\phi_{12}^1: Y_{12} \to Y_1=\Spec k[x_1, x_2]$ maps each copy of $\AA^1$ into $(x_1=0)$ and $(x_2=0)$, respectively, while $\phi_{12}^2=\phi_{12}^1\circ\tau$, where $\tau$ is the transposition switching the two copies of $\AA^1$, and similarly for the other morphisms $\phi_{ij}^{i}$. Finally, $Y_{123}$ is a copy of two points, each mapped to zero on one of the lines of $Y_{ij}$, respectively. Thus, with the notations from Theorem \ref{X'},
$Y'_i=(Y_i)_{N_i}$ is obtained by gluing two copies of $\AA^2$ outside the union of two lines, $N$ is the union of $Y'_i$-s for $i\in \{1,2,3\}$, glued along the $Y_{ij}$-s as indicated by the arrows in the network.  The colors in Figure 1 show which pairs of lines are identified in $N$.  $X'=X_N$ is isomorphic to $X$ outside $N$, while
\bea N= g(Y)\times_XX' \mbox{ and } Y\times_XX'=\bigsqcup Y'_i, \eea
as shown in the figure.

\end{example}

\begin{figure}
  \includegraphics[width=5.5 in]{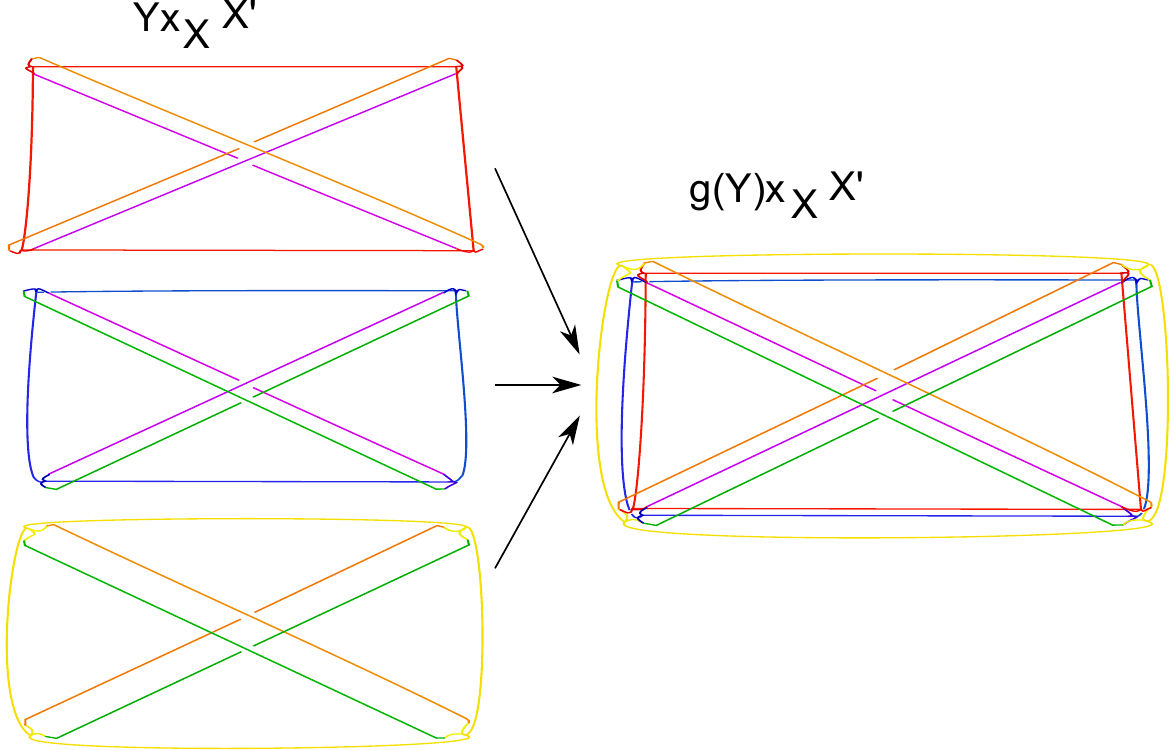}\\
  \caption{Example \ref{example}.}
\end{figure}

\subsection{Chow rings and universally closed push-forwards}

Let $g: Y \to X$ be a proper, local embedding of Noetherian stacks. Assume that $Y$ is reduced and geometrically unibranch, that the morphism on the image $Y \to g(Y)$ is equidimensional, and that its degree is equal to a fixed number $d$ at all generic points.
The stacks $Y'_I$ constructed in the previous sections are in general non-separated. However, they do satisfy the existence part in the valuative criterion of properness.
\begin{definition} \label{preproper}
A morphism of stacks $f: F\to G$ will be called universally closed
if it is of finite type and, for any complete valuation ring $R$
with field of fractions $K$ and any commutative diagram
\bea \begin{CD} \Spec (K) @>{u} >> F\\
                 @VVV  @VV{f} V \\
                 \Spec(R) @>>> G,  \end{CD} \eea
there exist finite extensions $K'$ of $K$ such that, for the integral closure $R'$ of $R$ in $K'$, the composition morphism $\Spec(K')\to F$
extends to $\Spec(R')$.
 Like in \cite{vistoli}, we are mostly interested in stacks having
 coarse moduli schemes. Then by Proposition 2.6. and Definition 2.1
 in \cite{vistoli}, the image of such a morphism of stacks is defined
 and the definition above is a natural extension to stacks of the notion of
 universally closed morphism of schemes.
\end{definition}
\begin{definition} \label{probabilistic weight} Consider a universally closed morphism of stacks $f: F\to G$. A probabilistic weight $w$ of $f$ is a map defined on the set of all integral substacks of $F$, with values in the interval $[0,1]$, such that the weight of the generic point of $F$ is 1 and, for any commutative diagram like in Definition \ref{preproper},
$$ w(D) =\sum_i w(P_i),$$
where $D$ is the image in $F$ of the unique point in $\Spec (K)$, and $P_i$ are the images of the closed point in $\Spec(R')$ for all map extensions $\Spec(R')\to F$ as above.
\end{definition}

\begin{definition} Given a universally closed morphism of integral stacks $f: F\to G$ with probabilistic weight $w$, let
  \bea f_*[V]= w(V) \deg(V/W) [W]  \eea
  for any closed integral substack $V$ of $F$, where $W=f(V)$ and $\deg(V/W)$ is as in Definition 1.15, \cite{vistoli}. A homomorphism $f_*: Z_k(F)\to Z_k(G)$ is then defined by linear extension.
  \end{definition}
\begin{proposition}
The homomorphism $f_*: Z_k(F)\to Z_k(G)$ induces a well defined
universally closed push-forward homomorphism $f_*: A_k(F)\to
A_k(G)$.
\end{proposition}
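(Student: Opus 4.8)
The plan is to reduce the statement to the classical fact that a proper push-forward of a principal divisor is principal (\cite{fulton}, Theorem 1.4), replacing the use of properness in that argument by the additivity of the probabilistic weight $w$. Since $A_k(F)=Z_k(F)/\operatorname{Rat}_k(F)$, where (working with real coefficients, so that the weights make sense) $\operatorname{Rat}_k(F)$ is the $\RR$-span of the cycles $[\operatorname{div}_W(r)]$ for $W\subseteq F$ an integral substack of dimension $k+1$ and $r\in k(W)^{\ast}$, it suffices to prove that $f_\ast[\operatorname{div}_W(r)]\in\operatorname{Rat}_k(G)$. First I would restrict attention to $W$: because $W$ is closed in $F$, every extension $\Spec(R')\to F$ of a map $\Spec(K)\to W$ factors through $W$, so the restriction of $w$ to the substacks of $W$ still satisfies the additivity of Definition \ref{probabilistic weight} for $f|_W$, and after dividing by the constant $w(W)$ (the case $w(W)=0$ being trivial) it becomes a probabilistic weight for $f|_W\colon W\to W':=f(W)$ taking the value $1$ at the generic point. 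Thus I may assume $F=W$ and $G=W'$, carrying along the scalar factor $w(W)$.

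Following the structure of Fulton's proof, I would then distinguish two cases according to $\dim W'$. When $\dim W'<\dim W$ the claim is that $f_\ast[\operatorname{div}_W(r)]=0$: every prime divisor $V$ occurring in $\operatorname{div}_W(r)$ has dimension $k$, and either $\dim f(V)<k$, in which case $\deg(V/f(V))=0$ in the sense of \cite{vistoli}, Definition 1.15, or $\dim f(V)=k$, and a fibration argument over the codimension-one points of $W'$ shows that these contributions cancel. When $\dim W'=\dim W$ the morphism $f|_W$ is generically finite and the candidate is the norm: set $s:=N_{k(W)/k(W')}(r)\in k(W')^{\ast}$ and aim to prove the cycle identity
\be
f_\ast[\operatorname{div}_W(r)]=w(W)\,[\operatorname{div}_{W'}(s)].
\ee
Since $[\operatorname{div}_{W'}(s)]\in\operatorname{Rat}_k(G)$, this settles the proposition. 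Both sides are supported on codimension-one integral substacks of $W'$, so it is enough to compare coefficients at each codimension-one point $Q$ of $W'$; localizing at $Q$ reduces the computation, after passing to coarse moduli schemes and normalizing, to a universally closed dominant map of one-dimensional local situations, i.e.\ to a weighted order--norm comparison over a discrete valuation.

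The hard part will be exactly this local order--norm identity. In the proper case one has, for a finite morphism of normal curves, the standard relation
\be
\sum_{P\mapsto Q}\operatorname{ord}_P(r)\,[k(P):k(Q)]=\operatorname{ord}_Q\bigl(N(r)\bigr),
\ee
whose proof uses that the fibre over $Q$ is complete, so that every branch is accounted for. Here $f$ is only universally closed, so a fibre may fail to be complete; the point is that the probabilistic weight is designed precisely to repair this. Applying the valuative criterion to the local ring at $Q$ together with each branch of $r$, the defining relation $w(D)=\sum_i w(P_i)$ expresses the weight of the generic contribution as the sum of the weights of its specializations $P_i$ lying over $Q$, and feeding this into the order computation the weighted sum $\sum_i w(P_i)\deg(P_i/Q)\operatorname{ord}_{P_i}(r)$ reduces to $w(W)\operatorname{ord}_Q(N(r))$, the desired coefficient. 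Thus the additivity of $w$ plays the role that global completeness plays in the proper case, and establishing this identity is the crux of the argument; once it is in hand, assembling the codimension-one coefficients yields the displayed equality, and linear extension over all $W$ and $r$ gives $f_\ast(\operatorname{Rat}_k(F))\subseteq\operatorname{Rat}_k(G)$, so that $f_\ast$ descends to a well-defined homomorphism $A_k(F)\to A_k(G)$.
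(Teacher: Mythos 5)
Your proposal takes essentially the same route as the paper's proof: both follow Fulton's Proposition 1.4 via normalization down to the codimension-one order--norm comparison, and both use the additivity of the probabilistic weight --- the weights of the several divisors $V_i^l$ sharing the local ring of one extension of the valuation summing to $w(W)$ --- as the exact substitute for properness, so that the classical identity $\sum_i \operatorname{ord}_{V_i}(r)\deg(V_i/W)=\operatorname{ord}_W(N(r))$ yields $f_*[\operatorname{div}(r)]=[\operatorname{div}(N(r))]$. Your additional bookkeeping (restricting to $W$, renormalizing by $w(W)$, and treating the dimension-dropping case, where the cancellation in fact occurs over the generic point of $f(W)$ rather than its codimension-one points) only makes explicit steps the paper leaves implicit.
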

\begin{proof}
Propositions 3.7 in \cite{vistoli} and Proposition 1.4,
\cite{fulton} deal with the proper push-forward in the cases of
stacks and schemes, respectively. The difference for universally
closed maps lies in the proof of the later, in the case when $\dim
F=\dim G$. The case when $f$ is finite is identical to Case 2 in
Proposition 1.4, \cite{fulton}. Following \cite{fulton} closely, we
take normalizations of the source and target, and the problem is
thus reduced to the case of a universally closed morphism of normal
varieties. Let $W$ be a codimension one subvariety of $G$, let $A$
be the local ring of $W$ on $G$, and $B$ the integral closure of the
$A$ in the field $k(F)$ of rational functions on $F$, such that $B$
is a discrete valuation ring. Then by Definition \ref{preproper},
for each maximal ideal $m_i$ of $B$ there is a finite number of
codimension one subvarieties $V_i^l$ of $F$ such that $B$ dominates,
and is therefore equal to, the local ring of each $V_i^l$ in $X$.
Then for any $r\in k(F)^*$,
$$ \sum_{i} \mbox{ ord}_{V_i^l}(r) \deg(V_i^l/W)= \mbox{ ord}_W(N(r)),$$
where for each $i$ a choice of the index $l$ has been fixed,
and $N(r)$ is the determinant of the $k(G)$-linear endomorphism of $k(F)$ given by multiplication by $r$.
Finally, $$f_*[ \mbox{ div }(r)]=\sum_{V_i^l}\mbox{ ord}_{V_i^l}(r) f_*[V_i^l]= \sum_{V_i^l}\mbox{ ord}_{V_i^l}(r) w(V_i^l)\mbox{ deg}(V_i^l/W)[W].$$
For $i$ fixed, $\mbox{ ord}_{V_i^l}(r)\mbox{ deg}(V_i^l/W)$ is constant and $\sum_lw(V_i^l)= w(F)=1$ from Definition \ref{probabilistic weight}. Thus
$$ f_*[ \mbox{ div }(r)]=\sum_{W}\mbox{ ord}_W(N(r)) [W]$$
 like in the case of proper morphisms. This finishes the part of the proof specific to the universally closeness of $f$.
\end{proof}
  Universally closed push-forwards enjoy the usual properties of their proper relatives: for example, they commute with flat pullbacks, and the usual projection formula holds for $f$ universally closed and flat.

\begin{theorem} \label{universally closed, probabilistic weight}
 Let $g: Y \to X$ be a proper, local embedding of Noetherian stacks. Assume that $Y$ is reduced and geometrically unibranch, that the morphism on the image $Y \to g(Y)$ is equidimensional, and that its degree is equal to a fixed number $d$ at all points of $g(Y\setminus Y_1)$.
 There exists a
Deligne--Mumford stack $X'$ with a surjective \'etale morphism to $X$,
such that the fiber product
$Y'=g(Y)\times_XX'$ is a finite union of stacks $Y'_i$ mapping \'etale onto
$Y$, and such that the maps $Y'_i \to Y$ and $p: X'\to X$
 are universally closed. Moreover, $p$ admits a
probabilistic weight $w$.

\end{theorem}


\begin{proof}

Consider a network of local embeddings for  $g: Y \to X$ as in Definition \ref{network}. Let $X'$ be the corresponding lift of $X$ constructed in Theorem \ref{X'}.
Let $\zeta_I$ denote a generic point of  $Y'_I$, with $\zeta_{\emptyset }$ the generic point of $X'$ which specializes to the image of $\zeta_I$. With the notations from equation (\ref{deg}), define
$$w(\zeta_I):= \frac{1}{|\cP_k| [ \xi_I \to \xi_{\emptyset} ]}$$
for any $I \in \cP_k$. Here for any $K$, we let $\xi_K$ denote the image of $\zeta_K$ in $Y_K$. Then equation (\ref{deg}) becomes \bean \label{weight-degree} w(\zeta_I)= \sum_{} [\xi_J \to \xi_I ]w(\zeta_J), \eean
where the sum is taken after all $J \in \cP_j, J \supset I$ such that $\xi_I$ specializes to $\varphi^I_J(\xi_J)$.
As $\{ Y'_I \setminus ( \bigcup_{J\supset I} Y'_J )\}_I$ forms a locally closed stratification of $X'$,  the generic point of any integral substack $D$ of $X'$ will be found in exactly one of the above strata. We extend $w$ to a function on all points of $X'$ by identifying $w(D)$ with the weight of the generic point of its associated stratum which specializes to it.

The universally closeness property will result during the proof that
$w$ is a probabilistic weight of $p:X'\to X$. Consider a complete
discrete valuation ring $R$ with field of fractions $K$, a
commutative diagram
\bea \begin{CD} \Spec (K) @>{u} >> X'\\
                 @VVV  @VV{p} V \\
                 \Spec(R) @>{v}>> X,  \end{CD} \eea
such that the image $q_0$ in $X'$ of the generic point of $\Spec(R)$  lies in a stratum $Y'_I \setminus (\bigcup_{K\supset I} Y'_K)$, and the image $q_1$ in $X$ of the closed point of $\Spec(R)$  lies in $\Im (Y_J\setminus \bigcup_{K\supset J} Y_K \to X)$ for some $J\supseteq I$. Since $(Y'_I \setminus (\bigcup_{K\supset I} Y'_K))\cong (Y_I \setminus (\bigcup_{K\supset I} Y_K))$ and the map $Y_I \to X$ is proper, there is a unique extension  $v':\Spec(R')\to Y_I$ of a composition $\Spec(K') \to \Spec(K) \to Y'_I\to Y_I$, with the notations of Definition \ref{preproper}. Let $q\in \Im(Y_J \to Y_I)$ be the lift of $q_1$ through this extension. Then through each point in the preimage of $q$ in $Y'I\subset X'$ there is a unique lift  $\Spec R' \to Y'_I \hookrightarrow X'$ of the map $v'$. The generic point of each such lift has to be $q_0$, because the map $p_i:Y'_I \to Y_I$ restricts to the above mentioned isomorphism $(Y'_I \setminus (\bigcup_{K\supset I} Y'_K))\cong (Y_I \setminus (\bigcup_{K\supset I} Y_K))$. Let $y_i$ be the images of the closed points of these lifts. They all have the same weight $w(\xi_J)$. Then by equation (\ref{weight-degree}),
$$w(q_0)=\sum_i w(y_i),$$
which proves that $w$ is a probabilistic weight for the universally
closed morphism $p$.
\end{proof}

\begin{corollary} \label{pullback injective}
   For each $i\in \{0, ...., n\}$ and $I \in \cP_i$, there is a universally closed push-forward map $p_{I *}: A_k(Y'_I) \to A_k(Y_I)$, such that for any connected component $Z$ of $Y_I$, the restriction of the map
   $$p_{I *}\circ p_{I}^* :  A_k(Y_I) \to A_k(Y_I)$$
 to $A_k(Z)$ is $d\cdot \mbox{ id}_{A_k(Z)}$,
   where $d$ is the degree of the morphism $p_I^{-1}(Z) \to Z$.
   In particular, the flat pullback $p_{I}^*$ is injective.
\end{corollary}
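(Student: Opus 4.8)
The plan is to bootstrap from the preceding proposition and then reduce the composite $p_{[I*]}\circ p_{[I]}^*$ to the standard degree formula for a flat, generically finite morphism, with the probabilistic weight supplying precisely the correction needed along the non-separated locus.

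First I would produce the push-forward $p_{[I*]}$. By Theorem~\ref{X'} the morphism $p_{[I]}: Y'_I \to Y_I$ is \'etale, hence flat of relative dimension zero, so the flat pullback $p_{[I]}^*$ is defined and (by the remark following the push-forward proposition) commutes with the operations used below. To see that $p_{[I]}$ is universally closed and carries a probabilistic weight, I would run the argument of the preceding proposition with $Y_I$ in the role of $X$: the stack $Y'_I$ is built in Theorem~\ref{X'} from the local embedding $N_I \to Y_I$ by the same inductive recipe that produces $X'$ from $Y\to X$, so the strata $\{Y'_J \setminus \bigcup_{K\supset J} Y'_K\}_{J \supseteq I}$ and the renormalized weight $w_I(\xi_J) := w(\xi_J)/w(\xi_I)$ (the global weight of the preceding proposition, rescaled so that $w_I(\xi_I)=1$) satisfy Definition~\ref{preproper} and the defining relation of a probabilistic weight; linearity in $w$ makes the rescaling harmless. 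The push-forward proposition then yields $p_{[I*]}: A_k(Y'_I)\to A_k(Y_I)$.

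Next I would evaluate the composite on a generator $[W]$, with $W\subseteq Y_I$ an integral closed substack of dimension $k$. Since $p_{[I]}$ is \'etale, $p_{[I]}^{-1}(W)$ is reduced, and writing its irreducible components as $V_1,\dots,V_r$ gives $p_{[I]}^*[W]=\sum_j [V_j]$ with all multiplicities equal to one. Each $V_j$ has dimension $k$ and $p_{[I]}$ is quasi-finite, so $p_{[I]}(V_j)=W$; hence $p_{[I*]}[V_j]=w_I(V_j)\deg(V_j/W)[W]$ and $p_{[I*]}p_{[I]}^*[W]=\big(\sum_j w_I(V_j)\deg(V_j/W)\big)[W]$. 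The crux is the identity $\sum_j w_I(V_j)\deg(V_j/W)=d$, the global form of the stratum-wise relation in equation~\ref{weight-degree}. This is exactly the content of the probabilistic weight: the components $V_j$ are the lifts of $W$ governed by the valuative criterion of Definition~\ref{preproper}, and summing the defining relation $w(D)=\sum_i w(P_i)$ over them, weighted by the residue degrees $\deg(V_j/W)$ and normalized by $w_I(\xi_I)=1$, recovers the full generic degree $d=\deg(Y'_I/Y_I)$. Equivalently and more slickly, since $p_{[I]}$ is flat of relative dimension zero the projection formula for universally closed flat maps reduces the composite to multiplication by $p_{[I*]}[Y'_I]=w_I(\xi_I)\,\deg(Y'_I/Y_I)\,[Y_I]=d\,[Y_I]$, that is, to $d\cdot\mathrm{id}$.

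Finally, injectivity of $p_{[I]}^*$ is formal: if $p_{[I]}^*\alpha=0$ then $d\alpha=p_{[I*]}p_{[I]}^*\alpha=0$, so $\alpha=0$ once $d$ is invertible (with $\QQ$-coefficients, or whenever $A_k(Y_I)$ is $d$-torsion free; in the cases of interest $p_{[I]}$ is generically an isomorphism, so $d=1$). I expect the main obstacle to be the crux identity $\sum_j w_I(V_j)\deg(V_j/W)=d$: one must verify that the components of $p_{[I]}^{-1}(W)$ are exactly the extensions counted in the valuative criterion, so that no lift is omitted and none is double-counted, and that the renormalized weights $w_I$, although each bounded by $1$, reassemble after weighting by the relative degrees into the full degree $d$. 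This is precisely the step where the probabilistic weight, rather than a naive proper push-forward, is indispensable, and it is cleanest to isolate through the projection formula recorded after the push-forward proposition.
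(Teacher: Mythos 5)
Your proposal is correct and takes essentially the same route as the paper, which states the corollary without a separate proof as an immediate consequence of the preceding proposition: the construction applies verbatim to $p_{[I]}: Y'_I \to Y_I$ with the weight rescaled so that the generic point of $Y'_I$ has weight $1$, and the identity $p_{[I*]}\circ p_{[I]}^* = d\cdot \mathrm{id}$ is just the definition of the universally closed push-forward combined with the weight--degree relation (\ref{weight-degree}). Your isolation of $\sum_j w(V_j)\deg(V_j/W)=d$ as the crux, checked stratum-wise through the valuative criterion (and equivalently via the projection formula the paper records for universally closed flat maps), is exactly the intended argument.
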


We note that all maps $p_I$ are universally closed due to Proposition \ref{universally closed, probabilistic weight} in conjunction with Remark \ref{self-similarity for etale lifts}.

By convention, $X'=Y'_{\emptyset}$, and thus the Corollary shows how $A(X)$ can be regarded as a subgroup of $A(X')$, and how classes in $A(X)$ can be recovered from $A(X')$ via push-forward.

In addition to the assumptions of Theorem \ref{universally closed, probabilistic weight}, for the remainder of this section we will assume $X$ to be smooth, and the morphisms $\phi_I^J: Y_I \to Y_J$ to be local regular embeddings. An extended Chow ring of the network $\{ \phi_I^J: Y_I \to Y_J \}_{I,J}$, was introduced in Definition 3.6 of \cite{noi1}. We recall this definition with a slight variation that does without the action of a symmetry group on $\cP=\bigcup\cP_k$.

\begin{notation} Fix $I$ and $I\cup \{h\} \in \cP$.
 For any cycle $\alpha=[V] \in Z_l(\Im (Y_{I\cup \{h\} }\to Y_{I} ))$, let $\alpha_h \in  Z_l(Y_{I\cup \{h\} })$ be defined as follows:
 $$\alpha_h=\frac{\sum_i [V^i_h]}{\deg\left( (\phi^{I}_{I\cup\{ h\} })^{-1}(V)/V\right) }$$
where $\{V^i_h\}_i$ are the $l$--dimensional components of $(\phi^{I}_{I\cup\{ h\} })^{-1}(V)$.
\end{notation}
\begin{definition}
 The vector spaces $A_l(\{ Y_I\}_{I\in \cP};\QQ)$  are defined by
\bea    A_l(\{ Y_I\}_{I\in \cP};\QQ) :=\oplus_{I}
Z^{l-\codim_{Z}Y_{I}}(Y_{I}) / \sim , \eea  the  sum taken after all
$I\in \cP$ with $ \codim_{X}Y_I \leq l$. The equivalence relation
$\sim $ is generated by rational equivalence together with relations
of the type: $$ \alpha \sim \sum_{ h' }\alpha_{h'}, $$ for
any cycle $\alpha=\phi^I_{I\cup \{ h \}*}\alpha_h \in Z_l(  \Im (Y_{I\cup \{h\} }\to Y_{I})   )$.
\end{definition} The field  of coefficients $\mathbb{Q}$ will be
omitted in the notations throughout the rest of the text.
\begin{definition}
Multiplication is defined as
$$ \alpha\cdot_r\beta := {\phi}^{I*}_{I\cup J}(\alpha) \cdot {\phi}^{J*}_{I\cup
J}(\beta) \cdot \frac{c_{top}({\phi}^{I*}_{I\cup J}\cN_{Y_{I}|X})c_{top}({\phi}^{J*}_{I\cup J}\cN_{Y_{J}|X})}{c_{top}(\cN_{Y_{I\cup J|X}})}$$ in $A(Y_{I\cup J})$, for any two
classes $\alpha \in A(Y_I)$ and $\beta \in A(Y_J)$. Here ${\phi}^{I*}_{I\cup J},
{\phi}^{J*}_{I\cup J}$ are the (generalized) Gysin homomorphisms, as defined in \cite{vistoli}, while $c_{top}(\cN_{Y_{K}|X})$ denotes the highest Chern class of the normal bundle $\cN_{Y_{K}|X}$.
\end{definition}

\begin{theorem} \label{compare to network}
The following rings are isomorphic
$$A(X') \cong A(\{ Y_I\}_{I\in \cP}).$$
\end{theorem}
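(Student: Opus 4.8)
The plan is to realize both sides as assemblies of the Chow groups $A_*(Y_I)$ glued along their boundary strata, and to match these assemblies through the stratification of $X'$ furnished by Theorem \ref{X'}. Write $U_I := Y_I \setminus \bigcup_{J \supsetneq I} Y_J$ for the open stratum of $Y_I$, and let $\iota_I : Y'_I \hookrightarrow X'$ and $p_I : Y'_I \to Y_I$ be the closed embeddings and \'etale covers of Theorem \ref{X'}. The locally closed substacks $U'_I := p_I^{-1}(U_I) \subseteq Y'_I$ satisfy $U'_I \cong U_I$, and by the disjointness $Y_J \times_{Y_I} Y'_I = \bigsqcup_{J' \supseteq I} Y'_{J'}$ they form a stratification of $X'$ in which the level-$k$ strata $\bigsqcup_{I \in \cP_k} U'_I$ are pairwise disjoint. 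I would then define a comparison map $\Phi : A(\{Y_I\}_{I\subseteq N}) \to A(X')$ on the $I$-th summand by $\Phi(\alpha) = \iota_{I*}\, p_I^*\, \alpha$, extending by linearity.

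The heart of the argument is that $\Phi$ descends to the quotient defining the extended Chow ring. That it respects rational equivalence on each $Y_I$ is immediate, since $p_I^*$ is a flat pullback and $\iota_{I*}$ is the proper push-forward along a closed embedding. The essential point is that $\Phi$ annihilates the gluing relations $j^I_{Ih *}\alpha \sim \sum_{h'} \alpha_{h'}$. For $\alpha = [V]$ supported on $\Im(Y_{I \cup \{h\}} \to Y_I)$, flat base change along the \'etale cover $p_I$ gives $p_I^*\, j^I_{Ih *}[V] = [\,p_I^{-1}(V)\,]$, and by the property $Y_{I\cup\{h\}} \times_{Y_I} Y'_I = \bigsqcup_{h'} Y'_{I \cup \{h'\}}$ of Theorem \ref{X'} this preimage decomposes as a sum over all branches $h'$ of cycles lying on the disjoint pieces $Y'_{I \cup \{h'\}}$. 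Matching each piece with $\phi'^{I}_{I\cup\{h'\}*}\, p_{I\cup\{h'\}}^*\, \alpha_{h'}$ through the commutative square of Theorem \ref{X'} --- where the normalization $\deg((\phi^I_{Ih'})^{-1}(V)/V)$ in the definition of $\alpha_{h'}$ is exactly compensated by the degrees of the \'etale maps --- shows that $\iota_{I*}\, p_I^*\, j^I_{Ih *}[V] = \sum_{h'} \iota_{I\cup\{h'\}*}\, p_{I\cup\{h'\}}^*\, \alpha_{h'}$ in $A_*(X')$, so both sides of the gluing relation have the same image. Thus $\Phi$ is well defined.

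For the inverse I would use the stratification directly: every integral substack of $X'$ has its generic point in a unique $U'_I$, and sending its closure $\ol{V}\subseteq Y'_I$ to $\tfrac{1}{\deg(Y'_I/Y_I)}\, p_{I*}[\,\ol{V}\,] \in A_*(Y_I)$ defines a candidate inverse $\Psi$, the normalization being dictated by Corollary \ref{pullback injective}. Rather than compose on individual cycles --- which produces correction terms supported on deeper strata --- I would compare the two filtrations by stratum depth: both $A(X')$ and the network group are filtered with associated graded $\bigoplus_{I \in \cP_k} A_*(U_I)$ at level $k$, the network side because the gluing relation identifies a boundary class on $Y_I$ with classes on the level-$(k{+}1)$ pieces $Y_{I\cup\{h'\}}$, realizing the localization sequence $A_*(\bigcup_{J \supsetneq I} Y_J) \to A_*(Y_I) \to A_*(U_I) \to 0$. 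Since $\Phi$ and $\Psi$ induce the identity on each associated graded and the filtrations are finite, by the five lemma they are mutually inverse isomorphisms, giving the additive isomorphism. It remains to promote this to a ring isomorphism: one checks that the product $\alpha \cdot_r \beta$, carrying the excess factor $c_{top}(\bar\phi^{I\cap J*}_{I \cup J}\cN_{Y_{I\cap J}|Z})$, matches the honest intersection of $Y'_I$ and $Y'_J$ inside $X'$, which meet along $Y'_{I\cup J}$ with self-intersection contributing exactly this top Chern class through the regular-embedding hypothesis. I expect this last compatibility --- reconciling the combinatorially defined network multiplication, with its normal-bundle correction, against the geometric intersection product on the non-separated stack $X'$ --- to be the main obstacle, with the degree and weight bookkeeping of equation \ref{deg} the principal source of technical friction.
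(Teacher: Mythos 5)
Your forward map is literally the paper's: the paper defines $F$ on the $I$-th summand as the composite of the flat pullback $p_I^*$ with the push-forward $\phi'^{\emptyset}_{I*}$, and your verification that the gluing relations $j^I_{Ih*}\alpha \sim \sum_{h'}\alpha_{h'}$ die under $\Phi$ --- flat base change along the \'etale $p_I$ combined with the decomposition $Y_{I\cup\{h\}}\times_{Y_I}Y'_I=\bigsqcup_{h'}Y'_{I\cup\{h'\}}$ from Theorem \ref{X'}, with the degree normalization in the definition of $\alpha_{h'}$ absorbing the \'etale degrees --- is exactly the content behind the paper's one-line assertion that this sum of composites ``factors through'' $A(\{Y_I\}_{I\subseteq N})$; here you supply detail the paper omits. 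For the inverse, both arguments rest on the same stratification of $X'$ by the $U_I$ and the same localization sequences, but the organization differs: the paper never defines a map on cycles. Instead it runs a descending induction through the commutative ladder of open/closed exact sequences
$$\oplus_{k \not\in J} A( Y^{(\prime)}_{ J\cup\{ k\} }) \longrightarrow  A( Y^{(\prime)}_{ J })  \longrightarrow  A(U_J) \longrightarrow 0,$$
peeling an arbitrary $\alpha'_J\in A(Y'_J)$ into $p^*_J\alpha_J$ plus classes pushed forward from deeper strata, and then --- this is the step carrying the real weight --- proves that the resulting collection $[\alpha_I]$ is unique up to $\sim$, using Corollary \ref{pullback injective} and an induction on the index set via Lemma 3.13 of \cite{noi1}. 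That uniqueness statement is the paper's substitute for your ``$\Phi$ and $\Psi$ induce the identity on each associated graded.''

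The soft spot in your version is the well-definedness of $\Psi$. Defined cycle-by-cycle via $\frac{1}{\deg(Y'_I/Y_I)}p_{I*}$, it is not obviously a map on $A(X')$: the divisor of a rational function on an integral $W'\subseteq Y'_I$ has components lying in deeper strata $U'_K$, which your recipe routes to the $K$-summands rather than to $A(Y_I)$, so the vanishing of $\Psi(\mbox{div}(r))$ in the network group is not a consequence of the universally closed push-forward alone --- it is equivalent to the $\sim$-relations, i.e.\ to a piece of the very statement being proved. Your filtration/unipotence argument cannot start until this is settled (the paper's ladder-plus-uniqueness construction is precisely the repair, producing the inverse by surjectivity rather than by a cycle-level formula); note also that the level-$k$ associated gradeds are a priori only quotients of $\oplus_{I\in\cP_k}A(U_I)$, though for the ``identity on gr implies isomorphism'' trick this is harmless. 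Finally, you predict the ring structure will be the main obstacle, but the paper spends no effort there: the product $\cdot_r$ with its excess factor $c_{top}(\bar{\phi}^{I\cap J*}_{I\cup J}\cN_{Y_{I\cap J}|Z})$ was engineered in \cite{noi1} precisely to match the intersection product under these identifications, and in the paper the genuinely delicate point is the additive inverse, specifically the uniqueness-up-to-$\sim$ step you glossed.
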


\begin{proof}

Compositions of the flat pullbacks $p_I^*$  with the push-forward of
embeddings $\phi'^{\emptyset }_{I *}$ add up to a morphism
$$\oplus_{I\in \cP} A( Y_I) \to A(X').  $$ which moreover factors through
     $$    F: A(\{ Y_I\}_{I\in \cP})  \to A(X').                    $$
The compatibility of the group morphism $F$ with the product operations is a direct consequence of the excess intersection formula for the embeddings of $Y'_I$ and $Y'_J$ into $Y'_{I\cap J}$, with intersection $Y'_{I\cup J}$.

Construct an inverse for $F$ as follows. For each $J\in \cP$, let $U_J$ denote the complement in $Y'_J$  of all the images of $Y'_I$, with $J\subset I$. (To define this complement  one can work with supports of the corresponding coarse moduli schemes, but there is a canonical stack structure on $U_I$). Note that  $U_J= Y_J \setminus (\bigcup_{I\supset J} \Im (Y_I \to Y_J) )$ as well. Working with the commutative diagram of open/closed exact sequences
$$ \begin{CD} \oplus_{k \not\in J} A( Y_{ J\cup\{ k\} }) @>>>  {A( Y_{ J })}  @>>>  A(U_J) @>>> 0  \\
@V { p^*_{ J\cup\{ k\} }  } VV    @V{ p^*_{ J }  }VV  @V{ = }VV \\
\oplus_{k \not\in J} A( Y'_{ J\cup\{ k\} }) @>>> A( Y'_{ J }) @>>> A(U_J) @>>> 0.
\end{CD} $$
one finds, for each $\alpha'_J \in A( Y'_{ J })$, classes $\alpha_J \in A( Y_{ J })$ and $\alpha'_{J\cup\{ k\}}
\in A( Y'_{ J\cup\{ k\} })$ such that $$\alpha'_J=p^*_{ J }\alpha_J +\sum_{k \not\in J} \phi'^{J\cup\{ k\} }_{J *}\alpha'_{J\cup\{ k\}}.$$
Reiterating this argument one finds a collection of classes $[\alpha_I ]\in\oplus_I A( Y_{ I }) $ for all $I$ containing $J$, such that
$$   \alpha'_J=\sum_{ I \supseteq J} p^*_{ J } \phi^I_{J *}\alpha_I.  $$
The choice of $\alpha_I $ is unique up to the equivalence relation $\sim$. Indeed, if $\sum_{ I \supseteq J} p^*_{ J } \phi^I_{J *}\beta_I=0$ then $\sum_{ I \supseteq J} \phi^I_{J *}\beta_I=0$ by Corollary \ref{pullback injective}, or, equivalently, $\beta_J =-\sum_{ I \supset J} \phi^I_{J *}\beta_I$. After applying the equivalence relation to each $\phi^I_{J *}\beta_I$, $\beta_J$ can then be replaced by a sum of classes from $A(Y_I)$ with $I \supset J$, and induction on the index set can then be applied by Lemma 3.13 in \cite{noi1}.
  When $J=\emptyset$, the collection  $[\alpha_I ]\in\oplus_I A( Y_{ I }) $  defines the desired inverse of $F$.

\end{proof}



\section{The Chern classes of a weighted projective blow-up}

In this section we extend the notion of a blow-up along a closed embedding to the case of a proper local embedding $g:Y \to X$ of Noetherian stacks.
We assume that $Y$ is reduced and geometrically unibranch, that the morphism on the image $Y \to g(Y)$ is equidimensional, and that its degree is equal to a fixed number $d$ at all generic points.

 We note that for practical purposes, it is often enough to work with just $\Bl_{Y'_i}X'$,
for universally closed \'etale lifts $p: X'\to X$, and a corresponding \'etale morphism $Y_i'\to Y$ constructed as in Section 1. For example, this is the case when one is interested in intersection theory on a smooth Deligne-Mumford stack. However, in other contexts, a stack $\tilde{X}$ with a proper morphism $f: \tilde{X} \to X$ is required. A natural construction of $\tilde{X}$ follows. For special morphisms $g:Y \to X$, the stack $\tilde{X}$ was defined in \cite{noi1}.

\begin{definition} \label{blow-up local emb}
 Let $g:Y \hookrightarrow X$ be a proper local embedding of smooth stacks, and consider a groupoid presentation  $ \left[ R\rightrightarrows U \right]$ of
$X$  such that  $Y \times_X U = \bigsqcup_{i,a} V_i^a$ has all the
properties listed in Proposition \ref{U}. The blow-up $\tilde{X}=\Bl_YX$ of
$X$ along $Y$ is defined as the Deligne-Mumford stack of \'etale groupoid presentation
$\tilde{X}=[\tilde{R} \rightrightarrows \tilde{U} ],$ where $\tilde{U}$ is the fibered product over $U$ of the blow-ups $\Bl_{W_i}U$  for all $i$, and $\tilde{R}$ is the fibered product over $R$ of the blow-ups $\Bl_{S_{ij}^{ab}}R$  for all $i,j, b$,
 where $a$ is fixed.

 The natural morphisms $\tilde{s}, \tilde{t}: \tilde{R} \rightrightarrows \tilde{U}$, as well as $\tilde{e}: \tilde{U} \to  \tilde{R}$, $\tilde{i}: \tilde{R}\to \tilde{R}$ and $\tilde{m}: \tilde{R}\times_{\tilde{U}}\tilde{R}\to \tilde{R}$ making up the groupoid structure are induced from the groupoid morphisms $s,t,e,i,m$ of $ \left[ R\rightrightarrows U \right]$ by the universal property of the blow-up. Indeed, this is due to the following Lemma.
  \end{definition}
\begin{lemma} With the notations from Proposition \ref{U}, the following hold:
\begin{enumerate}
\item $V_i^a\times_UR \cong \bigsqcup_{j,b} S^{ab}_{ij}$ via each of the \'etale morphisms $s$, $t$.
\item $\Bl_{\bigsqcup_{j,b}S_{ij}^{ab}}R \cong (\Bl_{W_i}U) \times_U R$ via each of the \'etale morphisms $s$, $t$.
\item $e^{-1}\cI_{V_i^a\times_UR} = \cI_{W_i}$, while $i^{-1}\cI_{S_{ij}^{ab}}= \cI_{S_{ji}^{ba}}$
 and $m^{-1}\cI_{S_{ij}^{ab}}= \cI_{\bigsqcup_{k,c}S_{ik}^{ac}\times_US_{kj}^{cb}}$.
\item $\tilde{R}\times_{\tilde{U}}\tilde{R} \cong (\prod_{i,a,j,b,k,c})_{R\times_UR}\Bl_{S_{ik}^{ac}\times_US_{kj}^{cb}}(R\times_UR)$.
\end{enumerate}
\end{lemma}

\begin{proof} We will identify $V_i^a$ with its image $W_i$ in $U$. Statements (1) and (3) follow from definitions of $V_i^a$ and $S_{ij}^{ab}$. Statement (2) is a consequence of (1), together with the observation that
$\cI_{\bigsqcup_{j,b} S^{ab}_{ij}}^n \cong (s^{-1}\cI_{W_i})^n \cong s^{*}\cI_{W_i}^n$ and
$\cI_{\bigsqcup_{j,b} S^{ab}_{ij}}^n \cong (t^{-1}\cI_{W_i})^n \cong t^{*}\cI_{W_i}^n$
due to the fact that $s$, $t$ are \'etale.
Statement (4) follows in the same way, due to (3) and the fact that $m$ is \'etale.
\end{proof}

In particular, statement (2) leads to
\begin{corollary}
Let $g:Y \hookrightarrow X$ be a proper local embedding of smooth stacks, and consider a groupoid presentation  $ \left[ R\rightrightarrows U \right]$ of
$X$  such that  $Y \times_X U = \bigsqcup_{i,a} V_i^a$ has all the
properties listed in Proposition \ref{U}.
Then the blow-up morphism  $f: \tilde{X} \to X$ is proper, and there exists a Cartesian diagram
\bea \diagram  \tilde{X}'
\rto^{f'}\dto_{\tilde{p}} & X' \dto^{p} \\ \tilde{X} \rto_{f} &X,
\enddiagram \eea
where $p: X'\to X$ is the
universally closed \'etale cover constructed in Section 1, and with the notations from Section 1,
\bea  \tilde{X}':=(\prod_i)_{X'} \Bl_{Y'_i}X'.\eea
\end{corollary}

\begin{proof} With the notations from Definition \ref{blow-up local emb}, we have
$\tilde{U}\cong \tilde{X}\times_XU$, due to the previous Lemma, (2). Thus $f$ is proper since $\tilde{U}\to U$ is. On the other hand, $U$ is also an \'etale atlas of $X'$, and so $\tilde{U}$ is also an \'etale atlas of $\tilde{X}'$.
It remains to study the morphisms induced at the level of relations. Due to formula (\ref{relations_for_X'}), we have $\bigsqcup_{j,b} S^{ab}_{ij}\bigcap R' = \bigsqcup_cS^{ac}_{ii}\bigcap R'$, which is the relation space for $Y'_i$ (Definition \ref{network}). This is enough to deduce that the diagram of stacks is Cartesian.

\end{proof}

The blow-up $\tilde{X}$ is independent of the choice of \'etale atlas $U$.  This follows by standard functorial arguments (a detailed proof to be part of \cite{noi5}).

 For the remainder of this article, whenever we will talk about the Chern class of $\tilde{X}$, we will assume that $X$ and $Y$ are smooth stacks, and that the images $W_i$ of $V_i^a$ in $U$ for all $i$ intersect each other, as well as their intersections, transversely, so that $\tilde{X}$ is a smooth stack.
 In this case, as \bea
\begin{array}{cccc} p^*\cT_{X}\cong \cT_{X'}, &  \tilde{p}^*\cT_{\tilde{X}}\cong \cT_{\tilde{X}'}, & p_i^*\cN_{Y|X}\cong \cN_{Y'_i|X'}, & q_i^*\cN_{{\tilde{Y}}|\tilde{X}}\cong \cN_{{\tilde{Y}}'_i|\tilde{X'}}  \end{array}   \eea
for $p_i: Y'_i\to Y$, and the map between exceptional divisors $q_i:
{\tilde{Y}}'_i\to {\tilde{Y}}$. Thus calculating the Chern invariants of $\tilde{X}$ reduces
to the calculation for $\tilde{X}'$ due to Corollary \ref{pullback
injective}. We can thus reduce the problem for a local
embedding of smooth stacks to that for a succession of smooth embeddings. If,
moreover, the ideals $\cI_i$ of $Y'_i$ have compatible filtrations
with weights such that weighted blow-ups can be defined, then the
above reasoning applies to weighted blow-ups as well.

\begin{remark}
If $X$ and $Y$ are smooth, but the images $W_i$ of $V_i^a$ in $U$  do not intersect each other or their intersections transversely for all $i$, then in general  $\tilde{X}$ and $\tilde{X}'$  are not smooth stacks. However, even in this case they come equipped with a Chow class suitable enough for intersection theory. Indeed, in this case $ \tilde{X}':=(\prod_i)_{X'} \Bl_{Y'_i}X'$ is still a fibered product of smooth stacks over a smooth stack, and the Cartesian diagram
\bea  \diagram  X' \rto^{\Delta} &  \prod_i X'    \\
\tilde{X}' \uto \rto & \prod_i \Bl_{Y'_i}X' \uto
 \enddiagram  \eea
produces a Chow class $\Delta^{!}([\prod_i \Bl_{Y'_i}X'])$ on $\tilde{X}'$, invariant under deformations of the map $g$ to $X$.
\end{remark}

\subsection{Weighted projective blow-up and locally trivial weighted projective
fibration.}\label{general blowup}

In this section we will work with stacks over $\CC$.
  Let $Y$ be a smooth substack of a smooth
stack $X$. Consider an increasing filtration $\{  \cI_n\}_{n\geq 0}$ of the ideal
 $\cI_Y$ of $Y$ in $X$, such that  $\cI_0=\cO_X$,  $\cI_1=\cI_Y$ and $\cI_n\cI_m\subseteq\cI_{m+n}$ for all $m,n\geq 0$.
 \begin{lemma}\label{filtration} Assume that $\{  \cI_n\}_{n\geq 0}$
 satisfies the following properties (\cite{noi1}, section 3)
\begin{enumerate}
\item  $\cI_k\cap \cI_Y^2=\sum_{j=1}^{k-1} \cI_j\cI_{k-j}$,
\item $ \cI_k/(\cI_k\cap\cI_Y^2)$ is a subbundle of the conormal bundle
$\cI_Y/\cI_Y^2$.
\end{enumerate}
Then $\Proj (\oplus_{n\geq 0} \cI_n)$ has only quotient singularities. This implies the existence of a natural desingularization $\tilde{X}$ of $\Proj (\oplus_{n\geq 0} \cI_n)$, (constructed as in \cite{vistoli}, Proposition 2.8.) such that locally in the \'etale topology the morphism
$f: \tilde{X} \to \Proj (\oplus_{n\geq 0} \cI_n)$ is of the form
\bea [ W/H ] \to W/H,\eea
where $W$ is a scheme and $H$ is a finite group acting on it.
\end{lemma}
\begin{proof}
Consider an \'etale atlas of $X$ made of affine schemes $U=\Spec R$, such that $V:=Y\times_XU$ is complete intersection in $U$, and such that there exists a set of generators $\{ x_{ni}\}_{n,i}$  of  $\cI(U)$ with $ x_{ni}\in \cI_n(U)\setminus \cI_{n+1}(U)$, and with the property that the images of $\{ x_{ni}\}_{n\leq k,i}$ in $ \cI_k(U)/(\cI_k(U)\cap\cI_Y^2(U))$ form a basis for
$ \cI_k(U)/(\cI_k(U)\cap\cI_Y^2(U))$.
Let \bea R_1:=\frac{R[\{ y_{ni}\}_{n,i}]}{< \{ y_{ni}^n-x_{ni}\}_{n,i}>},\eea and let $Y_1\cong V$ be the zero locus of $I_1:=\{ y_{ni}\}_{n,i}$ in $U_1:=\Spec R_1$. Then $U_1$ is smooth and $Y_1$ is smooth too, because $Y$ is.
The finite group $G\cong \oplus_{n,i} \ZZ_n$ has a natural action on $R_1$.
Due to condition (1),
\bean \label{powers of ideal}(I_1^n)^G = \sum_{\sum_kka_k=n} \prod_{k} \cI_k^{a_k}(U) = \cI_n(U). \eean
Here $a_k$ are non-negative integers.

 If $\bigsqcup_R\Spec R$ is an \'etale atlas for $X$, then $\bigsqcup_R (\Bl_{Y_1}U_1/G)$ is an \'etale atlas for the stack $\Proj (\oplus_{n\geq 0} \cI_n)$, where $\Bl_{Y_1}U_1$ represents the blow-up of $U_1$ along $Y_1$, with the natural action of $G$ induced from the action on  $R_1$.

As $\bigsqcup_R (\Bl_{Y_1}U_1/G)$  has only quotient singularities, we can construct a natural desingularization of  $\Proj (\oplus_{n\geq 0} \cI_n)$ by following \cite{vistoli}, Proposition 2.8: We can choose $T = \bigsqcup_{R,x}\tilde{U}_x/M$, for finitely many choices of $x\in \Bl_{Y_1}U_1$ and suitable neighborhoods $\tilde{U}_x$ in $\Bl_{Y_1}U_1$,
where $M$ is the largest small subgroup of the stabilizer for $x$, (which insures that $T$ is smooth).  Then the normalization $R_T$ of $T\times_{\Proj (\oplus_{n\geq 0} \cI_n)}T$, with the morphisms induced by the two projections on $T$, define an \'etale groupoid structure on $T$. Indeed, the multiplicative structure comes naturally via the isomorphism of $R_T\times_{T} R_T$ with the normalization of \bea (T\times_{\Proj (\oplus_{n\geq 0} \cI_n)}T)\times_T (T\times_{\Proj (\oplus_{n\geq 0} \cI_n)}T)\cong T\times_{\Proj (\oplus_{n\geq 0} \cI_n)}T\times_{\Proj (\oplus_{n\geq 0} \cI_n)}T.\eea (by the Purity of the Branch Locus).

By equation (\ref{powers of ideal}), the quotient $W/(G/M)$ of the smooth scheme $W:=\tilde{U}_x/M$ is an open in $\Proj (\oplus_{n\geq 0} \cI_{n|U})$. Moreover,
\bea W \times (G/M) \cong \ol{W\times_{\tilde{U}_x/G}W}, \eea
where $\ol{W\times_{\tilde{U}_x/G}W}$ denotes the normalization of $W\times_{\tilde{U}_x/G}W$ (by the Purity of the Branch Locus). The construction of $\tilde{X}$ now implies that the following diagram is Cartesian:
\bea  \diagram   [W/(G/M)] \rto \dto & \tilde{X} \dto \\
W/(G/M) \rto &  \Proj (\oplus_{n\geq 0} \cI_n),  \enddiagram  \eea
and the horizontal arrows are \'etale.

\end{proof}

\begin{definition} Let $Y$ be a smooth substack of a smooth
stack $X$, satisfying conditions (1) and (2) in Lemma \ref{filtration}. Then $\tilde{X}$ constructed above, with the natural morphism
 $\pi : \tilde{X}\to X$,  will be called the weighted blow--up of $X$
 along $Y$, with the filtration $\{  \cI_n\}_{n\geq 0}$.

  \end{definition}

 Let $X^{\#}:=\Proj (\oplus_{n\geq 0} \cI_n)$, with the morphism $\pi^{\#}:X^{\#}\to X$. By Lemma 3.1 in \cite{noi1},  the reduced structure of $X^{\#}\times_XY$ is
$Y^{\#}:=\Proj(\oplus_{n\geq 0} \cI_n/\cI_{n+1}), $,
and  $\cI_n = \pi^{\#}_*\cI_{Y^{\#}}^n$.

Recall the morphism $f:\tilde{X}\to X^{\#}$. The closure in $\tilde{X}$ of $f^{-1}(Y^{\#}\setminus \mbox{ Sing }(X^{\#}))$ will be called the exceptional divisor of $\pi$, and denoted by $\tilde{Y}$. With the notations from the proof of Lemma \ref{filtration},  $\tilde{Y}$ is given locally in the \'etale topology
by $[(\tilde{Y}_{1 x}/M)/(G/M)]$, where $\tilde{Y}_{1 x} \subset \tilde{U_x}$ is the restriction of the exceptional divisor $\tilde{Y}_1$ in $\Bl_{Y_1}\Spec R_1$ to the open set $\tilde{U}_x$.

 Lemma  \ref{filtration} also implies $f_*\cO_{\tilde{X}}\cong \cO_{X^{\#}}$ and $f_*\cI_{\tilde{Y}}^n\cong \cI_{Y^{\#}}^n$, as $\tilde{X}$ and $X^{\#}$ coincide outside a codimension two locus. Thus $\pi_*\cI_{\tilde{Y}}^n\cong \cI_n$ for any $n$.

\begin{definition} \label{equivar affine fibration}
Consider the weighted blow-up $\tilde{X} \to X$ constructed in Lemma \ref{filtration}. Define
 \bea A:=\Spec(\oplus_{n\geq 0}\cI_n/\cI_{n+1}). \eea
 $A\to Y$ is a $\CC^*$-equivariant affine fibration, with a trivialization (locally in the Zariski topology) on which $\CC^*$ acts linearly on the fibres. Thus the fixed point locus of $A$ with respect to $\CC^*$ is a section of $A\to Y$. We will call it the zero section of the fibration, and identify it with $Y$.

 Denote $ \cL:=\cO_{\tilde{Y}}\otimes_{\cO_{\tilde{X}}}\cO_{\tilde{X}}(-\tilde{Y})=\cI_{\tilde{Y}}/\cI_{\tilde{Y}}^2,$ the conormal bundle of the regularly embedded $\tilde{Y}$ in $\tilde{X}$.
\end{definition}

\begin{lemma} \label{simplest blow-up} Let $\pi : \tilde{X}\to X$ be a weighted blow--up of $X$
 along $Y\hookrightarrow X$, with the filtration $\{  \cI_n\}_{n\geq 0}$, and let $\tilde{Y}$ denote its exceptional divisor.
With the notations set up above, $\pi_*\cL^n \cong \cI_n/\cI_{n+1}$, and \bea \tilde{A}=\Spec(\oplus_n\cL^n)\to A = \Spec (\oplus_n \pi_* \cL^n)\eea
is the weighted blow-up of $A$ along its zero section $Y$, for the filtration $\{\cJ_n\}_n$  of the ideal $\cJ=\oplus_{n\geq 0}\cI_n/\cI_{n+1}$ given by $\cJ_n:= \oplus_{k\geq n}\cI_k/\cI_{k+1}$.
Then $\tilde{Y}=\Proj (\oplus_n\cL^n)$ is also the exceptional divisor of $\tilde{A}$.
\end{lemma}
\begin{proof}

We noted that $f_*\cI_{\tilde{Y}}^n\cong \cI_{Y^{\#}}^n$ and thus also  $f_*(\cI_{\tilde{Y}}^n/\cI_{\tilde{Y}}^{n+1}) \cong \cI_{Y^{\#}}^n/\cI_{Y^{\#}}^{n+1}$ as $f$ has finite dimensional fibres. This implies $\pi_*\cL^n \cong \cI_n/\cI_{n+1}$ due to \cite{noi1}, proof of Lemma 3.1.

Consider an \'etale atlas of $X$ made of affine schemes $U=\Spec R$, like in the proof of Lemma \ref{filtration}. We construct the weighted blow-up $\tilde{A}\to A$ by the method outlined in the above mentioned proof.  We will keep the notations found there throughout this proof as well. Let  $V=\Spec S:=Y\times_XU$. Then an \'etale atlas of $A$ is made of affine schemes $\Spec S[\{ x_{ni}\}_{n,i}]$. Consider covers $T_1:=\Spec S[\{ y_{ni}\}_{n,i}]$ with $y_{ni}^n=x_{ni}$, let $Z_1\cong V$ be the zero locus of $J_1:=\{ y_{ni}\}_{n,i}$ in $T_1$, and consider the blow-up $\tilde{T}_1=\Bl_{Z_1}T_1$,  with exceptional divisor $\tilde{Z}_1$.
The group $G\cong \oplus_{n,i} \ZZ_n$ acts on $T_1$ and its blow-up.

With these data, $Z_1\cong Y_1\cong V$. Moreover, there is an isomorphism of normal bundles $\cN_{Z_1|T_1} \cong \cN_{Y_1|U_1}$, compatible with the action of $G$ on them.  Thus
$\tilde{Z}_1\cong \tilde{Y}_1$, as well as $\cN_{\tilde{Z}_1|\tilde{T}_1} \cong \cN_{\tilde{Y}_1|\tilde{U}_1}$, and the largest small subgroups of the stabilizer for corresponding points in the exceptional divisors coincide as well (while for points not in the exceptional divisor, the stabilizer itself is a small group). An \'etale atlas of  $\tilde{A}$ is $\bigsqcup_x\tilde{T}_{1x}/M$,   for finitely many choices of $x\in \tilde{Z}_1$, with the corresponding small subgroup $M$ and suitable open neighborhoods $\tilde{T}_{1x}\subset \tilde{T}_1$. Then
\bea  & \tilde{T}_1\cong \Spec (\oplus_{n\geq 0} \cI^n_{\tilde{Z}_1}/\cI^{n+1}_{\tilde{Z}_1}) \cong  \Spec (\oplus_{n\geq 0} \cI^n_{\tilde{Y}_1}/\cI^{n+1}_{\tilde{Y}_1}) \Rightarrow &  \\
 & \tilde{T}_1/M\cong \Spec (\oplus_{n\geq 0} \cI^n_{\tilde{Z}_1}/\cI^{n+1}_{\tilde{Z}_1})^M  \cong \Spec (\oplus_{n\geq 0} (\cI^n_{\tilde{Y}_1}/\cI^{n+1}_{\tilde{Y}_1})^M),
& \eea
thus $\tilde{T}_{1 x}/M \cong \Spec(\oplus_{n\geq 0} \cI^n_{\tilde{Y}_{1 x}/M}/\cI^{n+1}_{\tilde{Y}_{1 x}/M})$. This proves $\tilde{A}=\Spec(\oplus_n\cL^n)$.

\end{proof}

 Conversely, given a morphism $p: P\to Y$, with a sheaf $\cL$ on $P$ such that $P\cong \Proj(\oplus_n \pi_* \cL^n)$, then $P$ can be understood as the
exceptional divisor of the following weighted blow--up:
$$\Spec(\oplus_n\cL^n)\to \Spec (\oplus_n \pi_* \cL^n).$$


 \begin{lemma}
 Let $A\to Y$ be a $\CC^*$-equivariant affine fibration  with a trivialization (locally in the Zariski topology) on which $\CC^*$ acts linearly on the fibres, like in Definition \ref{equivar affine fibration}. Consider the natural filtration of the ideal $\cI$ of the zero section $Y$ in $A$  induced by the weights of the  $\CC^*$-- action, and let $\tilde{A} \to A$ be the corresponding weighted blow-up, with exceptional divisor $\tilde{Y}$. Then
 \bea \tilde{Y} \cong [ (A\setminus Y) /\CC^* ].  \eea
 \end{lemma}

 \begin{proof}
 Here we will employ the same notations as in the proof of Lemma \ref{simplest blow-up}. For $x\in \tilde{Z}_1$, with the corresponding small subgroup $M$ and suitable open neighborhood $\tilde{T}_{1x}\subset \tilde{T}_1$, let $\tilde{Z}_{1x}:=\tilde{T}_{1x}\bigcap \tilde{Z}_1$, and let $T_{1x}$ be the preimage of $\tilde{Z}_{1x}$ in the $\CC^*$-- bundle
 $ (T_1\setminus Z_1) \to \tilde{Z}_1.$
Consider the commutative diagram of GIT quotients
 \bea   \diagram  T_{1x} \rto \dto^{/\CC^*} & T_{1x}/M  \dto^{/\CC^*} \\
                   \tilde{Z}_{1x}  \rto & \tilde{Z}_{1x}/M. \enddiagram \eea
  Let $O_x$ denote the $\CC^*$-orbit parametrized by $x$.
 We claim that $M$ keeps $O_x$ pointwise fixed, so $\CC^*$ acts freely on $O_x/M \cong O_x$, and thus for suitable choice of $T_{1x}$ we have \bean  \label{stacky} \tilde{Z}_{1x}/M \cong [(T_{1x}/M) /\CC^*].\eean Indeed, any element in $\sigma_i \in M$ is a reflection, meaning that its fixed point locus is a divisor $\tilde{D}$ in $\tilde{T}_{1x}$. Moreover, from the definition of the $G$--actions, the divisor  $\tilde{D}$  is a strict transform of a divisor $D=(y_{ni}=0)$ fixed by $\sigma_i$. But $ x \in \tilde{D} \Leftrightarrow O_x \subseteq D$, and thus $M=\mbox{ Stab}_{q}$ for any  $q\in O_x$.

  From (\ref{stacky}) it follows that
  \bea  [(\tilde{Z}_{1x}/M)/(G/M)] \cong [[(T_{1x}/M) /\CC^*]/(G/M)]\cong  [[(T_{1x}/M) /(G/M)]/\CC^*] \cong [(T_{1x}/G)/\CC^*], \eea
  where $T_{1x}/G$ is an open in $A\setminus Y$.

 \end{proof}

\begin{notation}
  For any $\CC^*$-equivariant affine fibration   $A\to Y$  with a trivialization on which $\CC^*$ acts linearly on the fibres, we denote
\bea  \cP^w(A):= [ (A\setminus Y) /\CC^* ].\eea
 We will say that $\cP^w(A)$ is a weighted projective fibration.
\end{notation}

We return now to the weighted blow--up $\pi : \tilde{X}\to X$ of $X$
 along $Y$, with filtration $\{  \cI_n\}_{n\geq 0}$.
 We saw that the relation $\pi_*\cI_{\tilde{Y}}^n\cong \cI_n$ holds for any $n$.
The proof of Lemma \ref{filtration} also implies isomorphisms between Chow groups:
\bea    A(\tilde{X}) \cong A(X^{\#}) \mbox{ and }  A(\tilde{Y}) \cong A(Y^{\#}),   \eea as $\tilde{X}$ and $X^{\#}$ have the same coarse moduli space.
With this, we have the following  description of the Chow ring of ${\tilde{Y}}$ (Lemma 3.2 in \cite{noi1}).

\begin{lemma} \label{Chow ring of projective fibration}
a) The normal bundle in $A=\Spec(\oplus_{n\geq 0}\cI_n/\cI_{n+1})$
of the fixed locus $Y$ under the natural $\CC^*$ action on $A$ is
$$\cN_{Y|A}= \oplus_{n\geq 1}\cN_n/\cN_{n+1},$$
where $\{\cN_n\}_n$ is the filtration of the normal bundle
$\cN_{Y|X}$ dual to the filtration $ \{\cI_n/(\cI_n\cap\cI_Y^2)\}_n
$ of $\cI_Y/\cI_Y^2$.

b) There is a ring isomorphism $$A({\tilde{Y}}; \QQ)\cong \frac{A(Y; \QQ)[\tau ]}{ < P_{Y|X}(\tau )>},$$ where $P_{Y|X}(t)$ is the top
equivariant Chern class of the bundle $ \cN_{Y|A}$. In particular,
the free term of $P_{Y|X}(t)$ is the top Chern class of $\cN_{Y|X}$. Here
$\tau$ is the first Chern class of $\cO_{\tilde{Y}}(1):= \cN^{\surd}_{\tilde{Y}|\tilde{X}}$.

\end{lemma}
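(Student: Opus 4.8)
The plan is to prove (a) by identifying $Y$ with the vertex section of the weighted normal cone $A=\Spec(\oplus_{n\geq 0}\cI_n/\cI_{n+1})$ and computing its conormal sheaf directly from the grading. Writing $S_n:=\cI_n/\cI_{n+1}$, the ideal of $Y$ in $A$ is $\mathfrak{m}=\oplus_{n\geq 1}S_n$, and the conormal sheaf $\mathfrak{m}/\mathfrak{m}^2$ is graded with $n$-th piece $S_n/\sum_{i+j=n,\,i,j\geq 1}S_iS_j$. First I would note that the maps $\cI_i\cI_j\subseteq\cI_{i+j}$ descend to $S_i\otimes S_j\to S_{i+j}$, so that $\sum_{i+j=n}S_iS_j$ is the image of $\sum_{i+j=n,\,i,j\geq 1}\cI_i\cI_j$ in $S_n$; condition (1) identifies this sum with $\cI_n\cap\cI_Y^2$, giving $(\mathfrak{m}/\mathfrak{m}^2)_n=\cI_n/(\cI_n\cap\cI_Y^2+\cI_{n+1})$. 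Setting $F_n:=\cI_n/(\cI_n\cap\cI_Y^2)$, a short diagram chase using $\cI_{n+1}\subseteq\cI_n$ shows the kernel of the surjection $F_n\to(\mathfrak{m}/\mathfrak{m}^2)_n$ is exactly $F_{n+1}$, so $(\mathfrak{m}/\mathfrak{m}^2)_n\cong F_n/F_{n+1}$. By condition (2) each $F_n$ is a subbundle of $\cI_Y/\cI_Y^2$, so the conormal sheaf is locally free and its dual is $\cN_{Y|A}=\oplus_{n\geq 1}(F_n/F_{n+1})^\vee=\oplus_{n\geq 1}\cN_n/\cN_{n+1}$, with $\{\cN_n\}$ the dual filtration of $\cN_{Y|X}$. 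This gives (a).

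For (b) the plan is to read off $A^*(E)$ from the $\CC^*$-equivariant geometry of the cone. The grading action makes $A$ a cone with fixed locus the vertex section $Y$, and $E=\Proj(\oplus_n S_n)$ is the quotient $(A\setminus Y)/\CC^*$, the weighted projectivization of $\cN_{Y|A}$ in which the summand $\cN_n/\cN_{n+1}$ sits in degree $n$ and hence carries $\CC^*$-weight $n$. I would therefore identify $A^*(E)$ with $A^*_{\CC^*}(A\setminus Y)$. Since $A$ retracts $\CC^*$-equivariantly onto $Y$, homotopy invariance gives $A^*_{\CC^*}(A)\cong A^*_{\CC^*}(Y)\cong A^*(Y)[t]$, where the equivariant parameter $t$ is identified, under the quotient presentation, with $\tau=c_1(\cO_E(1))$ (the sign convention for $\tau$ is immaterial to the final statement).

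The key step is the localization/excision sequence for the closed $\CC^*$-substack $Y\subseteq A$ with complement $A\setminus Y$:
\[ A^*_{\CC^*}(Y)\xrightarrow{\ i_*\ }A^*_{\CC^*}(A)\longrightarrow A^*_{\CC^*}(A\setminus Y)\longrightarrow 0. \]
By the self-intersection formula $i^*i_*(\beta)=\beta\cdot e_{\CC^*}(\cN_{Y|A})$, and since $i^*$ is the homotopy isomorphism above, $i_*$ becomes multiplication by the equivariant top Chern class, so its image is the ideal generated by $e_{\CC^*}(\cN_{Y|A})$. Using part (a) together with the weight $n$ on $\cN_n/\cN_{n+1}$, this Euler class is $P_{Y|X}(t)=\prod_{n\geq 1}\prod_k(a_{n,k}+nt)$, where the $a_{n,k}$ are the Chern roots of $\cN_n/\cN_{n+1}$; its free term is $\prod_n c_{top}(\cN_n/\cN_{n+1})=c_{top}(\cN_{Y|X})$ since the top Chern class is multiplicative on the associated graded of the filtration $\{\cN_n\}$. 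This yields a surjection $A^*(Y)[\tau]/\langle P_{Y|X}(\tau)\rangle\to A^*(E)$.

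The main obstacle is making these equivariant identifications rigorous in the weighted, possibly stacky, setting and ruling out relations beyond $P_{Y|X}(\tau)$. For this I would invoke the Euler sequence of the weighted projective bundle from the Appendix, which exhibits $A^*(E)$ as generated over $A^*(Y)$ by $\tau$; combined with the fact that the leading coefficient $\prod_n n^{r_n}$ of $P_{Y|X}$ (where $r_n=\rank(\cN_n/\cN_{n+1})$ and $\sum_n r_n=\rank\cN_{Y|X}$) is a nonzero rational, this guarantees that over $\QQ$ the quotient $A^*(Y)[\tau]/\langle P_{Y|X}(\tau)\rangle$ is free of rank $\rank\cN_{Y|X}$, exactly the fibre dimension of the weighted projective bundle. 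The rank count then forces the surjection above to be an isomorphism, completing (b); this is precisely where $\QQ$-coefficients are essential.
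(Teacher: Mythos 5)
The paper does not actually prove this lemma: it is recalled verbatim from Lemma 3.1 of \cite{noi1}, so your argument should be measured against that reference and against the related machinery in Section 2 of the present paper. Your part (a) --- computing $(\mathfrak{m}/\mathfrak{m}^2)_n$ from the grading, using condition (1) to identify $\sum_{i+j=n,\,i,j\geq 1}S_iS_j$ with the image of $\cI_n\cap\cI_Y^2$ and condition (2) for local freeness of $F_n/F_{n+1}$ --- is exactly the intended content of those two conditions and is correct. Your part (b) is also correct, and it is a genuinely more direct route than the one the surrounding text takes: you work equivariantly on the cone $A$ itself, computing $A^*(E;\QQ)\cong A^*_{\CC^*}(A\setminus Y;\QQ)$ via the excision sequence and identifying the image of $i_*$ with the ideal $\langle P_{Y|X}(t)\rangle$ through the self-intersection formula and the isomorphism $i^*=(\pi^*)^{-1}$. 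By contrast, the paper's own use of this lemma (Section 2.1) deforms $A$ to the total space $N_{Y|A}$ of the normal bundle, reducing to an honest weighted projective bundle whose Chow ring and Chern classes are controlled by the Euler sequence of the Appendix, with the localization-theoretic inversion of the exceptional class appearing only later, in Proposition \ref{special case}. Your approach buys a self-contained presentation of $A^*(E)$ without the deformation; the paper's route buys the Chern class formula at the same time.

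Two refinements. First, ``$A$ retracts $\CC^*$-equivariantly onto $Y$, hence homotopy invariance'' is not by itself a Chow-theoretic argument: a contracting action on a singular cone does not give $\pi^*$ an isomorphism. What makes it work here is that conditions (1)--(2) force $A\to Y$ to be a locally trivial fibration with fibre a weighted affine space (the paper asserts exactly this when it states that $E$ is a locally trivial weighted projective fibration), so $\pi^*$ is surjective by the usual noetherian induction and injective because the zero section gives $i^*\pi^*=\mathrm{id}$; one should also record that positivity of the weights gives finite stabilizers on $A\setminus Y$, so that $[(A\setminus Y)/\CC^*]$ is Deligne--Mumford and the identification of its rational Chow ring with $A^*_{\CC^*}(A\setminus Y;\QQ)$ is legitimate. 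Second, your final paragraph is superfluous: the excision sequence is exact at the middle term, so once the image of $i_*$ is the ideal generated by $P_{Y|X}(t)$ the isomorphism follows outright --- no extra relations can occur and no rank count is needed. (As written that paragraph also contains a small slip: $A^*(Y)[\tau]/\langle P_{Y|X}(\tau)\rangle$ is free over $A^*(Y)$ of rank $\rank \cN_{Y|X}$, which is one more than the fibre dimension of $E\to Y$, matching instead the number of Chow classes of a weighted projective space of that dimension.)
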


  Lemma \ref{Chow ring of projective fibration} sets up the context
  for calculating the Chern classes of the locally trivial weighted projective fibration
$p: {\tilde{Y}}\to Y$ by deforming ${\tilde{Y}}$ to a weighted projective bundle on $Y$
and applying the Euler's sequence from Appendix. Here by a weighted projective bundle on $Y$ we mean a stacky quotient
$[(N\setminus Y)/\CC^*]$ where $N\to Y$ is a vector bundle with a linear $\CC^*$--action.

Indeed, with the
notations above, consider the standard deformation \bea D=\Bl_{Y\times\{
\infty\} }(A\times \PP^1) \setminus \tilde{A}\eea  of $A$ to the total
space $N_{Y|A}$ of the normal bundle $\cN_{Y|A}$, where $\tilde{A}=
\Bl_YA$ is one of the components of the fibre over $\infty$ of
$\Bl_{Y\times\{ \infty\} }(A\times \PP^1)\to \PP^1$. Thus the fibre
over $\infty$ of $D \to \PP^1$ is $N_{Y|A}$, and the action of
$\CC^*$ over $A\times \PP^1$, with fixed locus $Y\times \PP^1$,
induces a natural $\CC^*$-- action on $D$. Moreover, due to Lemma
\ref{Chow ring of projective fibration}, a), the quotient of
$N_{Y|A}\setminus Y$ by $\CC^*$ is a weighted projective bundle, where we have identified the zero section in $N_{Y|A}$
with $Y$. Push-forward by the composition \bea  [(N_{Y|A}\setminus Y)/\CC^* ] \hookrightarrow
[(D\setminus Z)/\CC^*] \to {\tilde{Y}} \times \PP^1 \to {\tilde{Y}} \eea (where $Z$ is the fixed locus in $D$) induces an isomorphism between
the Chow rings of ${\tilde{Y}}$ and of the weighted projective bundle
 $[(N_{Y|A}\setminus Y)/\CC^* ]$. We obtain the following
\begin{proposition}\label{proj fibr}
Let $p: \tilde{Y}\to Y$ be a weighted projective fibration as above, and
$Q_n:=\cN_{w_n}/\cN_{w_{n+1}}$, for all indices $w_n$ such that
$\cN_{w_n}\not= \cN_{w_{n+1}}$, on which $\CC^*$ acts with weight
$w_n$. Then the total Chern class
$$c(\tilde{Y})= p^*c(Y)\prod_n c(Q_n\otimes \cL^{\otimes w_n}),$$
with $\cL:=\cO_{\tilde{Y}}(1)$ and $c(Q_n\otimes \cL^{\otimes
w_n}):=\prod_i(1+a_i+w_nc_1(\cL))$ where $a_i$ are the Chern roots
of $Q_n$ .
\end{proposition}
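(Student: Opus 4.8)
The plan is to reduce the computation to the model weighted projective bundle $N_{Y|A}/\CC^*$, perform the Chern class calculation there by means of the relative Euler sequence of the Appendix, and then transport the answer back to $P$ along the Chow ring isomorphism produced by the deformation preceding the statement.

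First I would use the converse construction to present $P$ as the exceptional divisor $E$ of the weighted blow--up $\tilde X=\Spec(\oplus_n \cL^n)\to X=\Spec(\oplus_n \pi_*\cL^n)$, so that on the total space $A=\Spec(\oplus_n \pi_*\cL^n)$ the fixed locus $Y$ acquires the normal bundle $\cN_{Y|A}\cong \oplus_n Q_n$ by Lemma~\ref{Chow ring of projective fibration}a), with $\CC^*$ acting on the summand $Q_n$ with weight $w_n$. The model object is then the weighted projective bundle $P_0:=N_{Y|A}/\CC^*$, and by the paragraph preceding the statement the push--forward along $N_{Y|A}/\CC^*\hookrightarrow D/\CC^*\to E\times\PP^1\to E$ gives a ring isomorphism $A^*(P_0)\cong A^*(E)=A^*(P)$ that is compatible with $p^*$ and carries the $\cO_{P_0}(1)$--class to the $\cO_P(1)$--class $\tau$.

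Next I would compute $c(P_0)$ directly on the model. The Appendix provides the relative Euler sequence
$$ 0 \to \cO_{P_0} \to \bigoplus_n p^*Q_n \otimes \cL^{\otimes w_n} \to \cT_{P_0/Y} \to 0, $$
with $\cL=\cO_{P_0}(1)$, the weight--$w_n$ summand $Q_n$ contributing the twist $\cL^{\otimes w_n}$. Multiplicativity of the total Chern class together with $c(\cO_{P_0})=1$ gives $c(\cT_{P_0/Y})=\prod_n c(Q_n\otimes \cL^{\otimes w_n})$, where $c(Q_n\otimes\cL^{\otimes w_n})=\prod_i(1+a_i+w_nc_1(\cL))$ in terms of the Chern roots $a_i$ of $Q_n$; feeding this into the tangent sequence $0\to\cT_{P_0/Y}\to\cT_{P_0}\to p^*\cT_Y\to 0$ (valid since the fibration is smooth) yields $c(P_0)=p^*c(Y)\prod_n c(Q_n\otimes\cL^{\otimes w_n})$, exactly the asserted formula on the model.

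Finally I would transport this identity to $P$. Every factor on the right is built from data---$Q_n=\cN_{w_n}/\cN_{w_{n+1}}$, the weights $w_n$, $\cL=\cO_P(1)$, and $p^*c(Y)$---that is defined on both $P$ and $P_0$ and matched by the isomorphism above, so the formula descends to $c(P)$. I expect the main obstacle to be precisely this last transfer: one must check that the push--forward isomorphism of Chow rings genuinely identifies the tangent Chern class of the general fibre with that of the special fibre, i.e.\ that $c(\cT_{P/Y})$ and $c(\cT_{P_0/Y})$ correspond rather than merely sitting in isomorphic rings. Here the $\CC^*$--equivariance of the whole deformation $D/\CC^*\to\PP^1$ is essential: the relative tangent bundle of the family restricts on the two distinguished fibres to $\cT_{P/Y}$ and $\cT_{P_0/Y}$, its Chern classes are constant along the family, and the compatibility of $\cO(1)$ and of the subquotients $Q_n$ across the deformation forces the two sides to agree.
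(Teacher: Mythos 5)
Your argument is correct and takes essentially the same route as the paper: the paper likewise computes on the model $N_{Y|A}/\CC^*$ via the Appendix Euler sequence and Lemma \ref{Chow ring of projective fibration}, and performs the transfer you flag as the main obstacle by restricting $c(\cT_{D/\CC^*})$ to the two rationally equivalent fibres and pushing forward along $q: D/\CC^*\to P$ with $q\circ i_0=\mathrm{id}_P$, i.e.\ the projection-formula identity $i_{0*}c(\cT_P)=i_{\infty*}c(\cT_{N_{Y|A}/\CC^*})$. One cosmetic correction: the relative tangent bundle of $D/\CC^*\to\PP^1$ restricts on the fibre over $0$ to $\cT_P$ rather than $\cT_{P/Y}$ (work relative to $Y\times\PP^1$ if you want the relative tangents), but since the two differ by the visibly constant factor $p^*c(Y)$ this does not affect your argument.
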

\begin{proof}
Let ${ i_{\infty} }: [(N_{Y|A}\setminus Y)/\CC^* ] \hookrightarrow [(D\setminus Z)/\CC^*]$ and
$i_0: \tilde{Y} \to [(D\setminus Z)/\CC^*]$ be embeddings of fibres in the flat family $
[(D\setminus Z)/\CC^*] \to \PP^1$ and let $q: [(D\setminus Z)/\CC^*] \to \tilde{Y}$ be the natural
projection obtained after taking quotients of $D \to A$, such that
$q\circ i_0= \mbox{ id}_{\tilde{Y}}$. Then by the projection formula and the
rational equivalence of fibres, \bea  i_{0 *}c(\cT_{\tilde{Y}})=i_{0
*}i_0^*c(\cT_{[(D\setminus Z)/\CC^*]})=  i_{\infty
*}i_{\infty }^*c(\cT_{[(D\setminus Z)/\CC^*]}) = i_{\infty
*}c([(N_{Y|A}\setminus Y)/\CC^* ]), \eea
and thus after composing with $q_*$,
  $$c(\cT_{\tilde{Y}})=q_*i_{\infty *}c([(N_{Y|A}\setminus Y)/\CC^* ]),$$
  which by Appendix and Lemma \ref{Chow ring of projective fibration}
  is of the form described in this Proposition.
\end{proof}

\subsection{Model for a weighted blow-up}

We start our Chern class calculations with the most approachable
type of weighted blow-ups: when the blow-up locus is the fixed locus
of a $\CC^*$-action on the entire space. In this case, equivariant
cohomology techniques permit the recovery of Chern classes of the
blow-up from their pull-backs to the exceptional divisor, which in
turn are easily computable.

Let $Y$ be a stack, $A$ a $\CC^*$--equivariant affine fibration on $Y$ like in Definition \ref{equivar affine fibration},
such that the $\CC^*$--action on $A$ induces a decomposition of the
normal bundle of the fixed locus $Y$ $$\cN_{Y|A}=\oplus_h \cQ_n$$
with weights $\{w_n\}_n$ and $\mbox{ rk }\cQ_n =k_n$. Let $O_Y$
denote the trivial line bundle on $Y$. Consider the torus
$T:=\CC^*\times\CC^*$ action on $A\times O_Y$ coming from the
individual action of the first $\CC^*$ on $A$ and the second on
$\cO_Y$. In this subsection we will denote by $X := \cP^w(A \oplus
\cO_Y)$, the locally trivial weighted projective fibration  obtained
as a quotient of $A\times O_Y\setminus Z$ by $\CC^*$ embedded diagonally in
$T$ (where $Z$ denotes the fixed locus), by $\tilde{Y}:=\cP^w(A)$, and by
$\tilde{X}:=\cP(\cO_{\tilde{Y}}(-1) \oplus \cO_{\tilde{Y}})$.

We obtain a  blow-up diagram
\[ \diagram
   \tilde{Y}      \rto^{    j} \dto_{g}        &  \tilde{X} \dto^{f}\\
   Y     \rto^{ i } & X,
\enddiagram \]
where $Y=\PP(\cO_Y) \hookrightarrow \cP^w(A\oplus
\cO_Y)$, and similarly $j: \tilde{Y} \hookrightarrow \tilde{X}$ is the
embedding $\tilde{Y}=\cP(\cO_{\tilde{Y}} ) \hookrightarrow
\cP(\cO_{\tilde{Y}}(-1) \oplus \cO_{\tilde{Y} })$, the exceptional
divisor in the weighted projective blow-up $f: \tilde{X}\to X$.


\begin{proposition} \label{special case}
Keeping notations from above, assume that for each $n$, the total
Chern class $c(\cQ_n)=c_{k_n}(\cQ_n)+...+c_1(\cQ_n)+1$ can be
written as the pullback of a class
$p(\cQ_n)=p_{k_n}(\cQ_n)+...+p_1(\cQ_n)+1 \in A(X; \QQ)$. Then
 \bea c(\tilde{X})= f^*c(X)\frac{ (E
+1) \prod_{n=1}^l p(\cQ_n (-w_n E))}{\prod_{n=1}^lp(\cQ_n)}, \eea
where $E$ is the class of the exceptional divisor  and $p(\cQ_n
(w_ns)):=\prod_{i=1}^{k_n}(a_i+w_ns+1)$ where the pullbacks of $a_i$
on $Y$ are the Chern roots of $\cQ_n$.
\end{proposition}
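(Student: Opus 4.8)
The plan is to retrieve $c(\tilde{X})$ from its restriction to the exceptional divisor $E=j(\tilde{Y})$, exploiting that $f$ is $\CC^*$--equivariant with $E$ a component of the fixed locus. First I would observe that the statement has the shape $c(\tilde{X})=f^*c(X)+(\text{class supported on }E)$: indeed $f$ is an isomorphism over $X\setminus i(Y)$, and setting $E=0$ in the proposed correction factor $\frac{(E+1)\prod_n p(\cQ_n(-w_nE))}{\prod_n p(\cQ_n)}$ yields $1$, so this factor equals $1$ plus a multiple of $E$ and the correction $f^*c(X)\bigl(\text{factor}-1\bigr)$ is of the form $j_*\gamma$ for some $\gamma\in A^*(E)$. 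Writing the difference of the two sides of the asserted identity as $j_*\delta$, the whole proposition reduces to proving $\delta=0$, and I would access $\delta$ through the restriction map $j^*$.

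Next I would compute the restriction to $E$ in two ways and check they agree. The regular (codimension one) embedding $j$ gives the tangent-sequence relation $j^*c(\tilde{X})=c(T_E)\,c(N_{E|\tilde{X}})$. Here $E=\tilde{Y}=\PP^w(A)$ is precisely a locally trivial weighted projective fibration over $Y$, so the preceding Proposition evaluates $c(T_E)=g^*c(Y)\prod_n\prod_i(1+a_i+w_n\tau)$, with $\tau=c_1(\cO_E(1))$ and $a_i$ the Chern roots of $\cQ_n$; moreover the section $E=\PP(\cO_{\tilde{Y}})$ has $N_{E|\tilde{X}}=\cO_{\tilde{Y}}(-1)$, so $c(N_{E|\tilde{X}})=1-\tau$ and $E|_E=-\tau$. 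On the other hand, restricting the proposed formula to $E$ and using $f\circ j=i\circ g$, the identity $c(X)|_Y=c(Y)\prod_n c(\cQ_n)$ (as $\cN_{Y|X}\cong\oplus_n\cQ_n$), and $p(\cQ_n)|_E=g^*c(\cQ_n)=\prod_i(1+a_i)$, the factors $\prod_n p(\cQ_n)$ cancel, the factor $E+1$ becomes $1-\tau$, and the roots of $p(\cQ_n(-w_nE))$ become $a_i+w_n\tau$. The two expressions then coincide term by term, which is exactly the statement $j^*(j_*\delta)=e(N_{E|\tilde{X}})\cdot\delta=-\tau\,\delta=0$.

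The hard part is the final retrieval: non-equivariantly $\tau$ is a zero-divisor on $A^*(E)=A^*(Y)[\tau]/\langle P_{Y|X}(\tau)\rangle$ (Lemma~\ref{Chow ring of projective fibration}), so $-\tau\,\delta=0$ does not yet force $\delta=0$. This is where the $\CC^*$--action is indispensable. I would run the entire argument in $\CC^*$--equivariant Chow groups, where $f$ stays equivariant and $E$ is fixed, so that the equivariant Euler class $e^{\CC^*}(N_{E|\tilde{X}})=t-\tau$ --- with $t$ the generator of $A^*_{\CC^*}(\mathrm{pt})$ recording the $\CC^*$--weight of the normal direction --- becomes a unit after inverting $t$, since $\tau$ is nilpotent on $E$; this invertibility is the Atiyah--Bott localization input. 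The equivariant version of the restriction computation then gives $e^{\CC^*}(N_{E|\tilde{X}})\cdot\delta^{\CC^*}=0$, forcing $\delta^{\CC^*}=0$, and specializing $t\to 0$ yields $\delta=0$. I expect the delicate points to be the equivariant bookkeeping --- verifying that the twist by $-w_nE$ is exactly what reproduces the $+w_n\tau$ twist of the preceding Proposition on $E$, and that the equivariant lifts of both sides genuinely restrict equally to $E$ --- whereas the cancellation of $\prod_n p(\cQ_n)$ and the identification $N_{E|\tilde{X}}=\cO_{\tilde{Y}}(-1)$ are routine.
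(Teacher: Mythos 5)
Your proposal is, in substance, the paper's own proof, built from the same three ingredients: (i) the restriction to the exceptional divisor computed from the tangent/normal sequences of $j$ and $i$ together with the preceding Proposition (the Chern class of the weighted projective fibration, plus $\cN_{\tilde{Y}|\tilde{X}}=\cO_{\tilde{Y}}(-1)$); (ii) passage to $\CC^*$--equivariant theory, where $e^{\CC^*}(\cN_{\tilde{Y}|\tilde{X}})=t-\xi$ is invertible after localization, which is what retrieves the global class from its restriction; (iii) the classical limit $t\to 0$. Your excision step (``the difference is $j_*\delta$'') is a repackaging of the paper's use of the Atiyah--Bott formula, in which the contribution of the second fixed locus, the section at infinity, vanishes because $l^*\cT_{\tilde{X}}=l^*f^*\cT_{X}$ --- the same fact underlying your observation that $f$ is an isomorphism off $E$; and your ``$(t-\xi)\delta^{\CC^*}=0$ forces $\delta^{\CC^*}=0$'' plays the role of the paper's explicit $\beta=P(t)/(t-\xi)$ and $\alpha=j_*\beta$.

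However, the step you defer as ``equivariant bookkeeping'' is exactly where the content of the proof sits, and with the lifts your text suggests it fails. Two constraints must hold simultaneously: the equivariant correction factor must restrict to $1$ on $\tilde{X}\setminus E$ (otherwise the equivariant difference is not of the form $j_*\delta^{\CC^*}$ and there is no $\delta^{\CC^*}$ to kill), and its restriction to $E$ must match the equivariant tangent-sequence computation, in which $j^*f^*c^{\CC^*}(X)$ carries the factor $\prod_n\prod_i(1+a_i+w_nt)$ coming from the weights of $\CC^*$ on $\cN_{Y|X}$. If you keep the denominator $\prod_n p(\cQ_n)$ with its zero-weight lift and use the tautological equivariant class of the divisor $E$ (which satisfies $j^*[E]=t-\xi$), the first constraint holds but the second fails: already for a single Chern root it would require $(1+a+w\xi)(1+a)=(1+a+wt)(1+a+w\xi-wt)$, and the two sides differ by $w^2t(\xi-t)$. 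The discrepancy is of order $t$, so it is invisible in your non-equivariant cross-check at $t=0$, but the equivariant identity you posit is false as stated, and the argument then yields $(t-\xi)\delta^{\CC^*}\neq 0$ rather than $\delta^{\CC^*}=0$. The forced repair is the paper's: lift the denominator as $\prod_n p(\cQ_n(w_nt))$, i.e.\ as the equivariant Chern class of $\cN_{Y|X}$ with its fibre weights, and normalize the lift of the divisor class so that $j^*E=-\xi$, writing the linear factor as $(E+t+1)$; then both constraints hold (on the complement the numerator twist produces exactly $\prod_n p(\cQ_n(w_nt))$, cancelling the denominator) and $t\to 0$ gives the Proposition. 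This is precisely what the paper encodes in $P(t):=\frac{(-\xi+t+1)\prod_n p(\cQ_n(w_n\xi))}{\prod_n p(\cQ_n(w_nt))}-1$, with $P(\xi)=0$ making $\beta=P(t)/(t-\xi)$ well defined; once these lifts are in place, your $\delta$-argument is a sound and faithful version of the paper's localization step.
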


\begin{proof}
The morphism $f$ is equivariant with respect to the natural $\CC^*:=
T/\CC^*$--actions on $\tilde{X}$ and $X$ with the weights specified
above, such that $\tilde{Y}$ and the section at infinity are the
fixed loci for the $\CC^*$--action on $\tilde{X}$, and $Y$ and the
section at infinity are the fixed loci for the $\CC^*$--action on
$X$. Thus the diagram above yields another weighted blow-up diagram
\[ \diagram
   \tilde{Y}_{\CC^*}     \rto^{    j_{\CC^*}} \dto_{g_{\CC^*}}        &  \tilde{X}_{\CC^*} \dto^{f_{\CC^*}}\\
   Y_{\CC^*}     \rto^{ i_{\CC^*} } & X_{\CC^*},
\enddiagram \]
where $Z_{\CC^*}:= Z\times_{\CC^*} E\CC^*$ for each stack $Z$, and
 $B\CC^*$ is the classifying space of $\CC^*$, with universal family $E\CC^*$. As noticed above, $Y_{\CC^*}\cong Y$ and $\tilde{Y}_{\CC^*}\cong
\tilde{Y}$.

 In the following, for easiness of notation,
 we will drop the subscript $\CC^*$ for maps, and only employ it to denote equivariant classes.

In the equivariant Chow ring $A_{\CC^*}(\tilde{Y})$
\bea\begin{array}{cc}  e^{\CC^*}(j^*(\cT_{\tilde{X}}))=
e^{\CC^*}(\tilde{Y})e^{\CC^*}(\cN_{\tilde{Y}| \tilde{X}}), &
 e^{\CC^*}(i^*(\cT_{X}))= e^{\CC^*}(Y)e^{\CC^*}(\cN_{Y| X}), \end{array} \eea
  $$\Rightarrow \frac{j^*e^{\CC^*}(\tilde{X})}{j^*f^*e^{\CC^*}(X)}=
  \frac{e^{\CC^*}(\cN_{\tilde{Y}| \tilde{X}})}{g^*e^{\CC^*}(\cN_{Y| X})}\frac{e^{\CC^*}(\tilde{Y})}{g^*e^{\CC^*}(Y)}=
  \frac{ (-\xi +t+1) \prod_{n=1}^l c(\cQ_n (w_n \xi))}{\prod_{n=1}^lc(\cQ_n (w_n t))},$$
where $\xi$ is the first Chern class of $\cO_{\tilde{Y}}(1)$.

 Let $\alpha:= e^{\CC^*}(\tilde{X})/f^*e^{\CC^*}(X)-1$ and
$P(t) := \frac{ (-\xi +t+1) \prod_{n=1}^l p(\cQ_n (w_n
\xi))}{\prod_{n=1}^lp(\cQ_n (w_n t))} -1$.
  We note that $P(\xi )=0$.
   Thus $\beta := P(t)/(t-\xi )$ is well defined and by the self
   intersection formula on the exceptional divisor,
\bean \label{$j^*$} j^*\alpha=j^*j_*\beta. \eean By the Atiyah-Bott
localization theorem, \bean \label{a-b} \alpha =j_*\frac{j^*
\alpha}{e^{\CC^*}(\cN_{\tilde{Y}|\tilde{X}})}+ l_*\frac{l^*
\alpha}{e^{\CC^*}(\cN_{E_{\infty}|\tilde{X}})}, \eean where $j:
\tilde{Y} \hookrightarrow \tilde{X}$ and $l: E_{\infty}
\hookrightarrow \tilde{X}$ are the two fixed point loci of
$\tilde{X}$ under the $\CC^*$--action. On the one hand, by relation
(\ref{$j^*$}),  \bea j_*\frac{j^*
\alpha}{e^{\CC^*}(\cN_{\tilde{Y}|\tilde{X}})}= j_*\frac{j^* j_*\beta
}{e^{\CC^*}(\cN_{\tilde{Y}|\tilde{X}})} = j_*\beta \eea and on the
other hand, as the section at infinity $E_{\infty}= \cP^w(N)$ and
$\tilde{Y}$ are disjoint, $l^*\cT_{\tilde{X}} \cong l^* f^* \cT_{X}$ and
thus the second term on the right hand side of equation (\ref{a-b}) is
zero. Thus in fact $ \alpha = j_*\beta $ which implies \bea
e^{\CC^*}(\tilde{X})=f^*e^{\CC^*}(X)\frac{ (E +t+1) \prod_{n=1}^l p(\cQ_n (-w_n
E))}{\prod_{n=1}^lp(\cQ_n (w_n t))}. \eea The classical limit $t\to
0$ yields the desired relation.

\end{proof}

\subsection{Deformation to the weigthed normal cone}

Returning to the general case, let $Y$ be a smooth
substack of a smooth stack $X$. Let $\pi : \tilde{X}\to X$ be the
weighted blow--up of $X$  along $Y$ for an increasing filtration $\{
\cI_n\}_{n\geq 0}$ of the ideal $\cI_Y$, satisfying properties (1)
and (2) in Lemma \ref{filtration}. The ideal sheaf $\cJ$ of $Y\times\{
\infty\}$ in $X\times\PP^1$ admits a filtration formed by the
sheaves $\cJ_n:= \sum_{k=0}^{n} \cI_k \cK^{n-k}$ where $\cK$ is the
ideal of $\infty$ in $\PP^1$ pulled back to $X\times \PP^1$. Let
$M$ be the weighted projective blow-up of
$X\times\PP^1$ along $Y\times\{ \infty\}$ with the filtration $\{
\cJ_n\}_n$.

The usual properties of the deformation to the normal cone carry out
for this construction with the suitable changes in weights:

(1) There is a natural closed regular embedding $J:  Y
\times\PP^1\hookrightarrow M$.

(2)  The composition $\rho= p_2 \circ \Pi$ of the blow-up map $\Pi: M
\to X\times\PP^1$ with the projection $p_2: X\times\PP^1 \to \PP^1$
is a flat morphism of stacks, and the following diagram commutes
\[ \diagram
      Y \times\PP^1  \rto^{    J} \dto_{pr_2}        &  M \dlto^{\rho}\\
   \PP^1.
\enddiagram  \]

(3) Over $\PP^1\setminus\{\infty\}$, the preimage
$\rho^{-1}(\AA^1)=X\times \AA^1$ and $J$ is the trivial embedding.

(4) As a Cartier divisor,
$$M_{\infty}:=\rho^{-1}(\infty)=P + \tilde{X},$$
where $P=\cP^w(\Spec (\oplus_{n\geq 0}\cJ_n/\cJ_{n+1} ))$ is a locally
trivial weighted projective fibration and $\tilde{X}$ is the weighted blow--up of $X$
 along the locus $Y$, with the filtration $\{\cI_n\}$.
Both $P$ and $\tilde{X}$ are Cartier divisors of $M$, intersecting
in $\tilde{Y}:=\cP^w(\Spec( (\oplus_{n\geq 0}\cI_n/\cI_{n+1} ))$, which is embedded
as the section at infinity in $P$ and as the exceptional divisor in
$\tilde{X}$. On the other hand, $Y_{\infty}=Y\times\{\infty\}$
embeds in $M_{\infty }$ as the zero section in $P$ and is thus
disjoint from $\tilde{X}\hookrightarrow M_{\infty }$.

The proof is analogous to \cite{fulton}, (5.1).

\begin{theorem}\label{blow-up}
 Consider  the weighted blow--up $f: \tilde{X}\to X$ of a smooth stack $X$
along a smooth substack $Y$ with an increasing
filtration $\{ \cI_n\}_{n\geq 0}$ of the ideal $\cI_Y$, satisfying
conditions (1)-(2) in \ref{general blowup}. Let $\{\cN_n\}_n$ denote
the filtration of the normal bundle $\cN_{Y|X}$ dual to the
filtration $ \{\cI_n/(\cI_n\cap\cI_Y^2)\}_n $ of $\cI_Y/\cI_Y^2$.
Let $\cQ_n=\cN_{w_n}/\cN_{w_{n+1}}$ for all indices $w_n$ such that $\cN_{w_n}\not=\cN_{w_{n+1}}$.

$\CC^*$ acts with weight $w_n$ on $\cQ_n$.

 Assume that for each $n$, the total Chern class
$c(\cQ_n)=c_{k_n}(\cQ_n)+...+c_1(\cQ_n)+1$ can be written as the
pullback of a class $p(\cQ_n)=p_{k_n}(\cQ_n)+...+p_1(\cQ_n)+1 \in
A(X; \QQ)$. Then
 \bea c(\tilde{X})= f^*c(X)\frac{ (E
+1) \prod_{n=1}^l p(\cQ_n (-w_n E))}{\prod_{n=1}^lp(\cQ_n)}, \eea
where $E$ is the class of the exceptional divisor in $\tilde{X}$ and
$p(\cQ_n (w_ns)):=\prod_{i=1}^{k_n}(a_i+w_ns+1)$ where the pullbacks
of $a_i$ on $Y$ are the Chern roots of $\cQ_n$.
\end{theorem}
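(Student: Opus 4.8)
The plan is to reduce the general formula to the model computation of Proposition \ref{special case} by means of the deformation to the weighted normal cone $\rho:M\to\PP^1$ constructed above. The key observation is that every term on the right-hand side—$f^*c(X)$, the exceptional class $E$, the graded pieces $\cQ_n$ of the dual normal filtration and their weights $w_n$—is attached to the pair $(X,Y)$ together with the chosen filtration, and is insensitive to the global geometry of $X$ away from $Y$. Since $f$ is an isomorphism off the exceptional divisor, the discrepancy $c(\tilde X)-f^*c(X)$ is supported on $E$, and should therefore be computable after degenerating $X$, in a neighbourhood of $Y$, to the weighted completion of its normal cone, where a global $\CC^*$-action becomes available and Proposition \ref{special case} applies verbatim.

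To make this precise I would first blow up the deformation space $M$ along the section $J(Y\times\PP^1)$ of property (1), obtaining a flat family $\tilde\rho:\tilde M\to\PP^1$ whose fibre over every $t\in\AA^1$ is the weighted blow-up $\tilde X$, and whose fibre over $\infty$, by the disjointness $Y_\infty\cap\tilde X=\emptyset$ of property (4), is $\tilde P\cup_E\tilde X$ with $\tilde P:=\Bl_Y P$. I would equip $\tilde M$ with the $\CC^*$-action induced from the scaling action on $\PP^1$ fixing $0$ and $\infty$, so that $\tilde\rho$ is equivariant and the fibre $\tilde\rho^{-1}(0)\cong\tilde X$ is pointwise fixed; writing $i_0$ for the inclusion of this fibre, the relative tangent bundle restricts to $\cT_{\tilde X}$ and hence $c(\tilde X)=i_0^*c(\cT_{\tilde M/\PP^1})$. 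By Lemma \ref{Chow ring of projective fibration}(a) the normal cone of $Y$ in $\Spec(\oplus_n\cI_n/\cI_{n+1})$ is $\oplus_n\cQ_n$, so that $P=\Proj(\oplus_{n\ge0}\cJ_n/\cJ_{n+1})$ is exactly the ambient space $\PP^w(A\oplus\cO_Y)$ of Proposition \ref{special case} with $A=\oplus_n\cQ_n$ and $Y$ its fixed zero section, and $\tilde P$ is the corresponding model blow-up whose Chern class is given by the asserted formula.

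It then remains to transport the computation from the component $\tilde P$ of the fibre over $\infty$ to the generic fibre $\tilde X$. I would carry this out as in the proof of Proposition \ref{special case}, now working in the $\CC^*$-equivariant Chow ring of $\tilde M$: using the rational equivalence of the fibres of the flat family $\tilde\rho$ together with the projection formula, the pushforward $i_{0*}(c(\cT_{\tilde M/\PP^1})\cap[\tilde X])$ is matched with the restriction of $c(\cT_{\tilde M/\PP^1})$ over $\infty$; Atiyah–Bott localization against $E$, whose class becomes invertible after inverting the equivariant parameter and which is the common fixed locus of the two relevant blow-ups, then isolates the contribution of $\tilde P$, while the contribution of the second (a copy of $\tilde X$) vanishes for the same reason as the section at infinity in Proposition \ref{special case}, namely that there the relative tangent bundle is pulled back from $X$. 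Taking the classical limit of the equivariant parameter yields the stated non-equivariant identity, $f^*c(X)$, $E$ and the $\cQ_n$ all being extended as classes over the family.

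The main obstacle is precisely this transfer across a reducible, nodal special fibre: one cannot naively identify the Chern class of the general fibre with that of the special fibre, since the latter is singular along $E$ and carries two components, so $i_{0*}$ is not injective and the formula cannot be read off directly. Three ingredients must be handled with care to push the argument through: the flatness of $\tilde\rho$, which alone guarantees the rational equivalence of fibres used to move classes from $\infty$ to a general point; the disjointness $Y_\infty\cap\tilde X=\emptyset$ of property (4), which confines the entire normal-cone model to the single component $\tilde P$ and leaves the extra $\tilde X$ component inert; and the invertibility of the exceptional class under localization, which makes restriction to the fixed loci injective after inverting the equivariant parameter and is what permits recovering a class on the blow-up from its restriction to $E$, exactly as in the model case. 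Once these points are secured, the remaining algebraic manipulation duplicates that of Proposition \ref{special case}.
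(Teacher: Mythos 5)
Your setup coincides with the paper's: the same deformation space $\tilde{M}$ (the weighted blow-up of $M$ along $Y\times\PP^1$, with filtration $r^*\cI_n$), the same identification of the fibres over $0$ and $\infty$, and the same plan of feeding the component $\widetilde{P}$ of $\tilde{M}_{\infty}$ into Proposition \ref{special case}. The gap is in your transfer step. You propose to work in the $\CC^*$-equivariant Chow ring of $\tilde{M}$ for the action induced by scaling on $\PP^1$, and to invert $\tilde{j}_{0*}$ by Atiyah--Bott localization ``against $E$''. This cannot work as stated: under that action the fibre $\tilde{M}_0\cong\tilde{X}$ is itself a pointwise-fixed component, so the localization formula expresses any equivariant class on $\tilde{M}$ through its restrictions to \emph{all} fixed loci, including the restriction to $\tilde{M}_0$ --- the very class you are trying to compute; the argument is circular. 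Moreover, the rational equivalence $[\tilde{M}_0]\sim[\tilde{M}_{\infty}]$ you invoke holds only non-equivariantly: equivariantly the two fibre classes differ by a multiple of the equivariant parameter $t$ (already $[0]-[\infty]=\pm t$ in $A^*_{\CC^*}(\PP^1)$), so combining fibre equivalence with equivariant localization is inconsistent, and once corrected the equivariant identity degenerates into a tautology. Finally, in the general case $E$ is \emph{not} a fixed locus of any $\CC^*$-action on $\tilde{X}$ (your chosen action is trivial on the fibre over $0$), so the mechanism ``invert $E$ after inverting $t$, hence recover a class from its restriction to $E$'' --- which is legitimate in the model case, where the ambient space genuinely carries the action --- is simply unavailable here.

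What you are missing is the paper's localization-free retrieval device: the second description of the deformation space, $\tilde{M}\cong$ the weighted blow-up of $\tilde{X}\times\PP^1$ along $E\times\{\infty\}$. This yields a map $\tilde{M}\to\tilde{X}\times\PP^1$ whose composition $q$ with the projection satisfies $q\circ\tilde{j}_0=\mathrm{id}_{\tilde{X}}$, and $q_*$ is the needed left inverse of $\tilde{j}_{0*}$. Concretely, the paper works with the quotient sheaf $\cG_M:=\cT_{\tilde{M}}/F^*\cT_M$ --- which also repairs your use of $c(\cT_{\tilde{M}/\PP^1})$, a sheaf that fails to be locally free along the nodal fibre $\tilde{M}_{\infty}$, whereas $\cG_M$ is globally defined with $\tilde{j}_0^*\cG_M=\cG_0=\cT_{\tilde{X}}/f_0^*\cT_X$ --- caps $c(\cG_M)$ with the non-equivariant equivalence $[\tilde{M}_0]=[\tilde{M}_{\infty}]$ to obtain $\tilde{j}_{0*}c(\cG_0)=k_*c(\cG_{\infty|\widetilde{P}})+[\tilde{X}]$, evaluates $c(\cG_{\infty|\widetilde{P}})$ by the model case, and applies $q_*$. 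Note also that the $\tilde{X}$-component over $\infty$ does not contribute zero, as you assert by analogy with the section at infinity: since $\cG_M$ restricts to the zero sheaf there, its total Chern class is $1$, and the component contributes the fundamental class $[\tilde{X}]$, which under $q_*$ produces exactly the summand $1$ in the fraction $(E+1)\prod_n p(\cQ_n(-w_nE))/\prod_n p(\cQ_n)$. Localization belongs only inside the proof of Proposition \ref{special case}; your attempt to lift it to $\tilde{M}$ conflates the two steps and leaves the inversion of $\tilde{j}_{0*}$ unjustified.
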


\begin{proof}
Let $\tilde{M}$ be the weighted blow-up of $M$ along $Y\times
{\PP^1}$ with the filtration $r^*\cI_n$, were $r=p_1\circ \Pi$ is the
composition $M\to X\times\PP^1\to X$. Looking at the fibres over $0$
and $\infty \in \PP^1$,
\bea \diagram
      \tilde{X}  \rto^{ \tilde{j}_0 } \dto_{ f_0 } & \tilde{M} \dto^{ F } &
      \widetilde{P} + \tilde{X} \lto_{ \tilde{j}_{\infty } } \dto_{ f_{\infty } }\\
 {X}  \rto^{ {j}_0} \dto_{ } & {M} \dto^{ \rho } &
      {P} + \tilde{X} \lto_{ {j}_{\infty } } \dto \\
  0 \rto   &\PP^1 & \infty \lto.
\enddiagram  \eea

 The embeddings $\tilde{j}_{\infty }$, respectively ${j}_{\infty }$ split into $\tilde{k}:
  \widetilde{P}\to \tilde{M}$, $\tilde{l}: \tilde{X} \to \tilde{M}$, respectively
  $k: P\to M$,
 $l:\tilde{X} \to M$, where $F\circ \tilde{l}$ and $l$ can be naturally identified.
 Pullbacks of the quotient sheaf $\cG_M:= \cT_{\tilde{M}}/ F^*\cT_M$
yield $\tilde{j}_{0 }^*\cG_M = \cG_0:= \cT_{\tilde{X}}/f_0^*\cT_X$
and $\tilde{j}_{\infty }^*\cG_M = \cG_{\infty }$ which on $
\widetilde{P}$ is the quotient $ \cT_{\tilde{P}}/f^*\cT_{P}$, and on $\tilde{X}$ is the zero sheaf. We note that
the maps of locally free sheaves on stacks
$$\begin{array}{ccc} \cT_{\tilde{M}} \to F^*\cT_M,  & \cT_{\tilde{X}} \to f_0^*\cT_X & \mbox{
and } \cT_{\widetilde{P}} \to f_{\infty }^*\cT_{P}
\end{array}$$ are monomorphisms, as $\tilde{M}$
and $M$ are isomorphic outside the exceptional divisor and its
image, etc. Moreover, \bea \tilde{j}_{0
*}c(\cG_0)= \tilde{j}_{0*}\tilde{j}_{0}^*c(\cG_M)= c(\cG_M)\cdot \left[ \tilde{M}_0
\right] ,\eea which by rational equivalence is identified with \bea
c(\cG_M) \cdot \left[ \tilde{M}_{\infty} \right] = \tilde{j}_{\infty
*}c(\cG_{\infty})= k_* c(\cG_{\infty | \widetilde{P}})+
 \left[ \tilde{X} \right],\eea  as $\tilde{l}^* c(\cG_M) =
c(\tilde{l}^*\cG_M) =1$ on $\tilde{X}$. On the one hand, the formula
for $c(\cG_{\infty | \widetilde{P}})$ is given by Proposition \ref{proj fibr}.


On the other hand, $\tilde{M}$ is isomorphic to the blow-up of
$\tilde{X}\times\PP^1$ along ${\tilde{Y}}\times\{ \infty \}$, where ${\tilde{Y}}$ is the
exceptional divisor of $\tilde{X}$. As such, there is a map
$\tilde{\Pi}: \tilde{M}\to \tilde{X}\times\PP^1$ whose composition $q$
with the projection $\tilde{X}\times\PP^1 \to \tilde{X}$, satisfies
$q\circ \tilde{j}_0= \mbox{ id }_{\tilde{X} }$. From this and the
model case in  Proposition \ref{special case} we recover the general
formula in the theorem.

\end{proof}

\section{The moduli space $\ol{M}_{0,m}'(\PP^n, d)$ }

\subsection{} Let $m, n, d$ be nonnegative integers. In this section we apply
the general theory for weighted blow-ups along local
embeddings of smooth stacks to  calculate the total Chern class of the moduli space
of stable maps $\ol{M}_{0,m}(\PP^n,d)$. We first set up the context
by briefly recalling the blow-up constructions of
$\ol{M}_{0,m}(\PP^n,d)$ from \cite{noi1} (for $m=1$), \cite{noi2}
(for $m>1$), and \cite{noi3} (for $m=0$). They pertain to a family
of smooth Deligne-Mumford stacks $\ol{M}_{0,\cA}(\PP^n,d,a)$, and of
weighted blow-ups
$$\ol{M}_{0,\cA}(\PP^n,d,a) \to \ol{M}_{0,\cA'}(\PP^n,d,a'),$$
where  $a, a'\in \QQ$,  $\cA=(a_1,...,a_m),\cA'=(a'_1,...,a'_m)\in
\QQ^m$ such that
  \bea \begin{array}{cccc}  \sum_{i=1}^{m}a_i+da>2, & \sum_{i=1}^{m}a'_i+da'>2, & 1\geq a\geq a'>0, & 1\geq a_j\geq
  a_j'\geq 0 \end{array} \eea
for all $j=1,...,m$. Here
 $\ol{M}_{0,\cA}(\PP^n,d,a)$ is the stack
of $ (\cA, a)$-- weighted stable maps as defined in \cite{noi2}, parametrizing $(\cA, a)$--stable maps.
\begin{definition} An  $(\cA, a)$--stable map consists of a family of rational curves $\pi \colon C \to S$,
whose fibers are either smooth or with nodes as singularities, with
$m$ marked sections not intersecting the nodes of the fibers, with
 a line bundle $\cL$ on $C$ of degree $d$ on each fiber $C_s$, and a morphism
$e:\cO_C^{n+1}\to \cL$ (specified up to isomorphisms of the target)
satisfying a series of stability conditions:
\begin{enumerate}
\item $\omega_{C|S}(\sum_{i=1}^ma_ip_i)\otimes \cL^a$ is relatively ample over $S$,
\item $\cG :=\Coker e$, restricted over each fiber $C_s$, is a skyscraper sheaf supported only
on smooth points of $C_s$, and
\item for any $s\in S$ and $ p \in C_s$ and for any $ I \subseteq \{ 1,...,m \} $ (possibly empty) such that
$ p= p_i$ for all $i \in I$ the following holds  $$\sum_{i \in I} a_i + a\dim\cG_{p}\leq 1.$$
\end{enumerate}
\end{definition}

Let $C$ be a curve. A tail of $C$ is a closed connected subcurve
$C'$ of $C$ with the property  that $C \setminus C'$ is connected.

We will now consider the case when $\cA=\{ 1\}$. The basic definitions for this case are introduced in \cite{noi1}. All the other cases of weights $\cA$ which will be of interest to us can be deduced from this case (\cite{noi2}, \cite{noi3}).

\begin{definition} Let $D=\{1, ..., d\}$. We say
$I\subset \cP(D) \setminus\{\emptyset, D\}$ is a nested set if, for
any two $h, h'\in I $, the intersection $h\cap h'$ is either $h$,
$h'$ or $\emptyset$.

For any number $l>0$, we denote $I_{\leq l}:=\{ h\in I; |h| \leq l\}$ and $I_{> l}:=\{ h\in I; |h| > l\}$.
\end{definition}

Fix a positive number $a<1$ and a nested set $I\subset \cP\setminus\{\emptyset, D\}$
such that $h\cap h'=\emptyset$ for any distinct  $h, h'\in  I_{\leq 1/a}$.
In \cite{noi1}, (Proposition 2.3), a boundary map $\ol{M}^a_I \to \ol{M}_{0,1}(\PP^n,d,a)$ was described.
\begin{definition}
 With our notations, $\ol{M}^a_I$ is  the
stack of $I$--type, $a$--stable, degree $d$ maps from a rational
curve into $\PP^n$, i.e.
 $$(C, p_1, \{p_h\}_{h\in I_{\leq 1/a}}, \{C_{h}\}_{h\in I_{> 1/a}}, \cL, e)$$ made of an
 $a$--stable, degree $d$ pointed map $(C, p_1, \cL, e)$, together with  marked points
 $\{p_h\}_{h\in I_{\leq 1/a}}$ and connected subcurves $ \{C_{h}\}_{h\in I_{> 1/a}}$
 satisfying the following properties:
\begin{enumerate}
\item $\forall h\in  I_{> 1/a}$,  $p_1 \not\in C_h\subset C$ and $\deg \cL_{|_{C_h}}=|h|$;
\item $\forall h\in  I_{\leq 1/a}$, $\dim \Coker e_{p_h}=|h|$;
\item compatibility of incidence relations:
              \begin{itemize}    \item $\forall h\in  I_{\leq 1/a}$, $\forall h'\in  I_{> 1/a}$,
              $h\subset h'$ iff $p_h\in C_{h'}$;
\item  $\forall h, h'\in  I_{> 1/a}$, if $h'\subset h$, then $C_{h'}\subset
C_{h}$, if $h\subset h'$, then $C_{h}\subset C_{h'}$, otherwise $C_{h}\cap
C_{h'}=\emptyset $; \end{itemize}  \end{enumerate}
\end{definition}



\begin{notation} By convention,  $\ol{M}^a_{\emptyset}= \ol{M}_{0,1}(\PP^n,d,a)$.
When $I=\{ h\}$, we will denote $\ol{M}^a_I$ simply as $\ol{M}^a_h$.

  For each nested set $I$, we let $G_I\subset S_d$ be the largest subgroup that keeps each $h\in I$ invariant. In particular, $|G_h|=|h|!(d-|h|)!$.
\end{notation}

 When $|h|>1/a$, the stack $\ol{M}^a_h$ maps to a codimension 1 substack of $\ol{M}_{0,1}(\PP^n,d,a)$, and its generic point represents a map whose source is split as a union of two curves, one containing the marked point and the other of degree $|h|$. If $|h|\leq 1/a$, the stack $\ol{M}^a_h$ maps to a substack of higher codimension in $\ol{M}_{0,1}(\PP^n,d,a)$, and its generic point represents a map whose source contains the  marked point $p_1$ and another marked point of weight $|h|a$.

 There is a special \'etale atlas for $\ol{M}_{0,1}(\PP^n,d,a)$. Its definition is based on the notion of $\bar{t}$--rigid weighted stable maps, for any system  $\bar{t}$  of homogeneous coordinates on $\PP^n$. These are an adaptation of the rigid stable maps introduced in \cite{fulton-pand}, and were discussed in more detail in \cite{noi1}.
 \begin{definition} A $\bar{t}$--rigid stable map is given by the same data as a point of the moduli space $\ol{M}_{0,1}(\PP^n,d,a)$, together with an extra set of sections $ \{q_{i,j} \}_{0 \leq i \leq n, 1 \leq j \leq d}$ of weights $a_{i,j}=\frac{a}{(n+1)}$, and satisfying, via $e$, $$ ( \bar{t}_i) = \sum_{j=1}^d q_{i,j} $$ for each homogeneous coordinate $\bar{t}_i$ in $\bar{t}$.
 \end{definition}
 The moduli space of $\ol{M}_{0,1}(\PP^n,d,a, \bar{t})$ of $\bar{t}$--rigid weighted stable maps is represented by a torus bundle over an open subset of $\ol{M}_{0,\cA'}$, Hassett's moduli spaces of weighted stable curves. There is a natural action on $\ol{M}_{0,1}(\PP^n,d,a, \bar{t})$ by the finite group $(S_d)^{n+1}$, which permutes the extra marked points $q_{i,j}$ defined above.

 \begin{lemma} \label{etale weighted} (\cite{noi1}, Proposition 1.11, and in more generality in \cite{noi2}, Proposition 1.7.) An \'etale atlas for the stack  $\ol{M}_{0,1}(\PP^n,d,a)$ is given by \bea U:= \bigsqcup_{x, \bar{t}} U_x(\bar{t}), \eea where for suitable choices of finitely many coordinate systems $\bar{t}$   and of finitely many points $x$ in $\ol{M}_{0,1}(\PP^n,d,a, \bar{t})$, the smooth scheme $U_x(\bar{t})$ represents the quotient of an appropriately small affine neighborhood of $x$ by the largest small subgroup $H_x$ of the stabilizer $\mbox{ Stab}_x \subseteq (S_d)^{n+1}$. \end{lemma}
 This construction is based on \cite{vistoli} (proof of Proposition 2.8).
\begin{notation}
We choose $k$ to be the integer in $\{2,...,d\}$ with the property
$\frac{1}{k}\leq a < \frac{1}{k-1}$, or $k=1$ if $a=1$. (We note that if $0<a<\frac{1}{d}$, then the space $\ol{M}_{0,1}(\PP^n,d,a)$ is empty).
\end{notation}
With these notations, the following holds:

\begin{lemma} \label{U_x}

For each nested set $I$ whose elements $h$ satisfy $|h|\geq k$, the map $\phi_I: \ol{M}_I^a \to \ol{M}_{0,1}(\PP^n,d,a)$ is a proper local embedding. Moreover, for choices of $U_x(\bar{t})$ appropriately small, the \'etale atlas $U$ for the stack  $\ol{M}_{0,1}(\PP^n,d,a)$  defined in Lemma \ref{etale weighted} satisfies the properties listed in Proposition \ref{U} for each of these local embeddings $\phi_I$. Thus
\bea  U\times_{\ol{M}_{0,1}(\PP^n,d,a)}\ol{M}^a_I = \bigsqcup_{g\in S_d/G_I}V_{g(I)} \mbox{ and, more generally,  } V_I\times_{\ol{M}^a_I}\ol{M}^a_J = \bigsqcup_{g\in G_I/G_J}V_{g(J)}, \eea
 for $J\supset I$ whose elements also satisfy $|h|\geq k$. Here $V_{g(J)}$ are \'etale atlases of $\ol{M}_J^a$ naturally embedded in $U$, and $V_J \hookrightarrow V_I$ whenever $J\supset I$.

  Moreover, $V_J \bigcap V_K= V_{J\cup K}$ is a  transverse intersection in $V_{J\cap K}$ for all $J, K$ nested sets whose elements $h$ satisfy $|h|\geq k$.

Let $V_{\emptyset}:=U$. For each nested set $I$ and $h\not\in I$ with $|h|\geq k$,
 the network of local embeddings associated to $\phi_{Ih}^I:\ol{M}_{Ih}^a\to \ol{M}_I^a$ and the \'etale atlas $V_I$ consists of morphisms $\phi^J_K:\ol{M}_K^a\to \ol{M}_J^a$, where $K$ and $J$ are nested sets with $K\supset J\supseteq I$, whose elements $h'$ satisfy $|h'|= |h|$.
\end{lemma}

\begin{proof} We recall the relevant construction from  \cite{noi1} (proof of Proposition 2.3).
There exists a sequence of smooth varieties and morphisms:
\bea \ol{M}_{0,1}(\PP^n,d,a,\bar{t}) \hookrightarrow  \begin{CD}
\ol{M}_{0,0}(\PP^n\times\PP^1,(d,1),(a,1),\bar{t}) @>{f_{\bar{t}}}>> \PP^n_d(\bar{t}) @>{p_{\bar{t}}}>>
(\PP^1)^{d(n+1)}, \end{CD} \eea  where $\PP^n_d(\bar{t})$ is a
$(\CC^*)^n$--torus over an open subset of $(\PP^1)^{d(n+1)}$, and
$f_{\bar{t}}$  is a composition of blow--ups along some diagonals and their strict transforms, in a suitable order.

Let $N=\{0,...,n\}$, let $\Delta_{N\times h}$ denote the
diagonal in $(\PP^1)^{d(n+1)}$ where the coordinates corresponding to the set $N\times h \subset
N\times \{1,...,d\}$ agree. More precisely, the blow-ups are along the (strict transforms of) diagonals $\Delta_{N\times h}$ with $|h|\geq k$, in increasing order of dimension. At each blow-up step $l$, all (strict transforms of) diagonals with $|h|\geq l$ will intersect each other, and intersections of other (strict transforms of) diagonals, transversely (\cite{andrei}).

$D_{N\times h}^a(\bar{t}) \subset \ol{M}_{0,1}(\PP^n,d,a,\bar{t})$
is constructed from
$\Delta_{N\times h}^0(\bar{t}) =p_{\bar{t}}^{-1}(\Delta_{N\times h})$ by taking its strict transforms through the successive blow-ups, respectively the exceptional divisor at the $(d-|h|)$-th step, and finally intersecting the resulting space with
$ \ol{M}_{0,1}(\PP^n,d,a,\bar{t})$. We then let $ D_{N\times I}^a(\bar{t}):=\bigcap_{h\in I}
D_{N\times h}^a(\bar{t})$ for any nested set $I\subset\cP$.

Define
 \bean \label{VI}  V_I:= \bigsqcup_{x, \bar{t}}\bigsqcup_{[g'] \in (S_d/G_I)^n}(U_x(\bar{t}) \bigcap D^a_{g'(N\times I)}(\bar{t}))/H'_x, \eean
for finitely many $x$, $\bar{t}$ also employed in the construction of $U$, where the set of orbits $(S_d/G_I)^n$ corresponds to permutations on $(N\setminus\{0\})\times I$, and $H'_x$ is the largest small subgroup of the stabilizer of $x$ for the action of $G_I^{n+1}$ on $D^a_{g'(N\times I)}(\bar{t})$ (via conjugation by $g'$).

With the notations from the previous Lemma,  $H'_x=H_x$  whenever all the elements $h\in I$ satisfy $a|h|\geq 1$. Then each such $V_I$ embeds in $U$ and is an \'etale atlas of $\ol{M}^a_I$, canonically constructed like in \cite{vistoli} (proof of Proposition 2.8). Moreover, from the construction of $V_I$,
\bea  \label{etale for moduli} U\times_{\ol{M}_{0,1}(\PP^n,d,a)}\ol{M}^a_I = \bigsqcup_{g\in S_d/G_I}V_{g(I)}; \mbox{ as well, } V_I\times_{\ol{M}^a_I}\ol{M}^a_J = \bigsqcup_{g\in G_I/G_J}V_{g(J)}, \eea
for $J\supset I$ whose elements also satisfy $a|h|\geq 1$.
These formulae are equivalent to Lemma \ref{fibered product} for the spaces $\ol{M}^a_I$, which uniquely defines networks of local embeddings. Accordingly, for each nested set $I$ and $h\not\in I$ with $|h|\geq k$,
the  morphisms $\phi^J_K:\ol{M}_K^a\to \ol{M}_J^a$, where $K$ and $J$ are nested sets with $K\supset J\supseteq I$, whose elements $h'$ satisfy $|h'|= |h|$, form  the network of local embeddings associated to $\phi_{Ih}^I:\ol{M}_{Ih}^a\to \ol{M}_I^a$ and the \'etale atlas $V_I$.

Furthermore, since the diagonals and their transforms intersect each other, and intersections of other diagonals, transversely, it follows that $V_J \bigcap V_K= V_{J\cup K}$ is a  transverse intersection in $V_{J\cap K}$ for all $J, K$ nested sets whose elements $h$ satisfy $a|h|\geq 1$.

\end{proof}

Let $d$ be a positive integer. As described in \cite{noi2}, contractions $\ol{M}_{0,m}(\PP^n, d, a)$  of the space $\ol{M}_{0,m}(\PP^n, d)$  can be thought of as locally embedded in $\ol{M}_{0,1}(\PP^n, d+m-1,
a)$ when $a<1$. More precisely, after choosing a privileged point $1_M$ among the $m$ marked points and adjoining the rest of marked points $M=\{
2_M, ..., m_M \}$ to the set $\{ 1_D,..., d_D \}$ to form $D'=\{1_D,..., d_D, 2_M, ..., m_M \}$, one can write
\bea \ol{M}_{0,\cA}(\PP^n, d, a) \cong \ol{M}_{J}^{a} \eea for $J=\{\{2_M\}, \{3_M\}, ..., \{d_M\} \}$
and the $m$-tuple $\cA=(1,a,...,a)$.

For this reason we introduce the following notation.
\begin{notation}
From now on, we will let $\cA$ denote the $m$-tuple $(1,a,...,a)$, and work with subsets $h$ of $D'=\{1_D,..., d'_D, 2_M, ..., m_M \}$, and all nested subsets will be elements in $\cP(D')$.
We let $d':=d+m-1$. As before, we choose $k>1$ to be the integer with the property
$\frac{1}{k}\leq a < \frac{1}{k-1}$, or $k=1$ if $a=1$.
\end{notation}


 We note that while for the generic curve parametrized by
 $\ol{M}^a_I$, all tails and base locus points are marked by
 elements of $I$, other curves, represented by points in the
 boundary of $\ol{M}^a_I$, may have unmarked tails and base
 points. It is due to such points that the maps
$\ol{M}^a_J \to \ol{M}^a_I $ are in general only local embeddings. This suggests that marking all components and
base points of curves and their maps will result in moduli stacks $\ol{M}'^a_I$ embedded in $\ol{M}'_{0,\cA}(\PP^n, d, a)$, constructed like in Theorem \ref{X'}.


\begin{definition} \label{semi-rigid}
Consider the moduli functor from schemes to sets, associating to any scheme $S$ the set of
 $(\cA, a)$--stable, degree $d$ pointed maps $(C\to S, \{ p_i\}_ {i\in\{1,..., m\} }, \cL, e)$, together with a collection $\{P_s\}_{s\in S}$ of partitions
 $$ D = \sqcup_{ \alpha \in N_s } B_{\alpha}$$
 of the set $D=\{1,..., d\}$, one for each $s \in S$, such that for every $s \in S$, the set
 $$        N_s \cong \{ \mbox{ irreducible components of the curve } C_s \}, \mbox{ and }$$
\begin{enumerate}
\item The partition is compatible with the structure of the map  given by $(\cL, e)$: If $\alpha\in N_s$ corresponds to the component $C\subset C_s$ then $|B_{\alpha}|= \deg \cL_{|_{C}}$;
\item The partition is compatible with specialization in $S$: If $s_1, s_2 \in S$ then  $s_1\in \ol{\{s_2\} } $ $\Leftrightarrow$ $P_{s_1}$ is a refinement of $P_{s_2}$.
\end{enumerate}

A set of data as above will be called a semi-rigid $(\cA, a)$--stable map over $S$. An isomorphism of
semi-rigid $(\cA, a)$--stable maps is an isomorphism of $(\cA, a)$--stable maps which also preserves the partitions of $D$.

\end{definition}

\begin{definition} \label{M'}
Let $\ol{M}'_{0,\cA}(\PP^n, d,a) \to \ol{M}_{0,\cA}(\PP^n, d,a)$ be the \'etale surjective morphism constructed inductively as follows:
\bea  \begin{array}{cc} \ol{M}^{(d')}_{0,\cA}(\PP^n, d,a):= \ol{M}_{0,\cA}(\PP^n, d,a), & \ol{M}^{a, (d')}_h:=\ol{M}^{a}_h, \end{array} \eea while
 for $l \in \{k, ..., d'-1\}$,
  \bea \ol{M}^{(l)}_{0,\cA}(\PP^n, d,a)\to \ol{M}^{(l+1)}_{0,\cA}(\PP^n, d,a)\eea is the \'etale lift for the proper local embedding
$\ol{M}^{a, (l+1)}_h \to \ol{M}^{(l+1)}_{0,\cA}(\PP^n, d,a)$, for some $h$ satisfying $|h|=l$, with the \'etale atlas $U$ from Lemmas \ref{etale weighted} and \ref{U_x}, while for each nested set $I$ such that $h\not\in I$,
 \bea \ol{M}^{a, (l)}_{I}\to \ol{M}^{a,(l+1)}_{I}\eea is the \'etale lift for the proper local embedding
$\ol{M}^{a, (l+1)}_{hI} \to \ol{M}^{a,(l+1)}_{I}$ with the \'etale atlas $V_{I}$.
Finally,
\bea \ol{M}'_{0,\cA}(\PP^n, d,a):= \ol{M}^{(k)}_{0,\cA}(\PP^n, d,a).\eea
\end{definition}

The following Lemma ensures that Theorem \ref{X'} can be applied at each step in the definition above.
\begin{lemma} \label{etale lift-local embeddings commute} The following properties hold for the spaces $\ol{M}^{a,(l)}_{I}$:
\begin{enumerate}
\item There exist natural proper local embeddings $\phi_J^{I, (l)}: \ol{M}^{a, (l)}_{J} \to \ol{M}^{a,(l)}_{I}$;
\item For $h\not\in I$, the  maps $\phi_K^{J, (l)}:\ol{M}^{a, (l)}_{K} \to \ol{M}^{a,(l)}_{J}$, with $K\supset J \supseteq I$ such that the elements $h'\in K\setminus J$ satisfy $|h'|= |h|$, form the network of local embeddings associated to the proper local embedding $\phi_{Ih}^{I, (l)}$ with the \'etale atlas $V_I$ defined by formula (\ref{VI}).
\item The following diagram is Cartesian
\bea \diagram  \ol{M}^{a, (l)}_{J\bigcup K} \rto \dto & \ol{M}^{a, (l)}_{J} \dto \\\ol{M}^{a, (l)}_{K} \rto & \ol{M}^{a, (l)}_{K\bigcap J},
\enddiagram \eea
\item  For $J \supset I$ and $l>k$, the following diagram is Cartesian
\bea \diagram  \bigsqcup_{g\in G_{J_{l-1}}/G_J}\ol{M}^{a, (l-1)}_{g(J)} \rto \dto & \ol{M}^{a, (l-1)}_{I} \dto \\\ol{M}^{a, (l)}_{J} \rto & \ol{M}^{a, (l)}_{I},
\enddiagram \eea
where $J_{l-1}=\{ h\in J;  |h|=l-1 \mbox{ or } h\in I\}$.
\end{enumerate}
all properties holding as long as all the elements $h$ of $J\setminus I$, $K\setminus I$ satisfy $|h|\geq k$. Here, according to our convention, $\ol{M}^{a, (l)}_{\emptyset}=\ol{M}^{(l)}_{0,\cA}(\PP^n, d,a)$.

\end{lemma}
\begin{proof}
Properties (1)-(3) follow by  decreasing induction on $l$. The first step, when $l=d'$, is true due to Lemma \ref{U_x}. If (1)-(3) hold for $l>k$, then by (3), Corollary \ref{X_Y functorial} can be applied at each step in the proof of Theorem \ref{X'}, yielding the following Cartesian diagram
  \bea \diagram  \ol{M}^{a, (l-1)}_{J} \rto \dto & \ol{M}^{a, (l-1)}_{I} \dto \\\ol{M}^{a, (l)}_{J} \rto & \ol{M}^{a, (l)}_{I},
\enddiagram \eea
 for $J \supset I$ such that $J\setminus I$ contains no elements $h$ with $|h|=l-1$, and $l>k$, while for $J \supset I$ such that $J\setminus I$ contains only elements $h$ with $|h|=l-1$, Theorem \ref{X'} implies that the following  diagram is Cartesian
 \bea \diagram  \bigsqcup_{g\in G_{I}/G_J}\ol{M}^{a, (l-1)}_{g(J)} \rto \dto & \ol{M}^{a, (l-1)}_{I} \dto \\\ol{M}^{a, (l)}_{J} \rto & \ol{M}^{a, (l)}_{I},
 \enddiagram \eea
 In general, splitting $J=J_{l-1}\bigsqcup (J\setminus J_{l-1})$ yields
 (4). As well,  at the level of \'etale atlases:
 \bea \ol{M}^{a, (l)}_{J} \times_{\ol{M}^{a, (l)}_{I}}V_I \cong  \bigsqcup_{g\in G_{J_{l-1}}/G_J}V_{g(J)}.\eea
 This ensures that (1)-(3) are true when $l$ is replaced by $(l-1)$ as well.
 Indeed,  (1) and (3) follow directly from the Cartesian diagrams above, while (2) follows from the relation between \'etale atlases, which is equivalent to Lemma \ref{fibered product} for our spaces.
\end{proof}

\begin{remark} Alternatively, $\ol{M}'_{0,\cA}(\PP^n, d,a)$ can be constructed by applying the construction steps in the proof of Theorem \ref{X'} directly to the network $\{\phi^I_J:\ol{M}_J^a\to \ol{M}_I^a\}$, where $I$ and $J$ are nested sets whose elements $h$ satisfy $|h|\geq k$. Indeed, the \'etale atlases do not change throughout the construction, while the groupoid relations of $\ol{M}'_{0,\cA}(\PP^n, d,a)$, obtained by applying equation (\ref{relations_for_X'}) to the \'etale atlases of Lemma \ref{U_x}, are independent of the order in which the \'etale lifts above are performed.
\end{remark}

\begin{remark}
One could ask if simply applying Proposition \ref{U} and Theorem \ref{X'} directly to the local embedding
\bea Y:=\bigsqcup_h\ol{M}^a_h \to X:=\ol{M}_{0,\cA}(\PP^n, d,a) \eea
 (when the union is taken after one copy of $h$ for each cardinality $|h|\geq 1/a$) would  not yield the same outcome as in Definition \ref{M'}.
However we note that  for the map above, the \'etale atlas $U$ introduced in Lemma \ref{etale weighted} does admit a partition of $Y\times_XU$  satisfying all properties listed in Proposition \ref{U}. Moreover, the \'etale lift associated to the map above would contain the same number of copies of boundary divisors mapping onto $\ol{M}^a_h$ irrespectively of $|h|$, which is different from the case of $\ol{M}'_{0,\cA}(\PP^n, d,a)$ constructed by us.

We also note, in view of Remark \ref{minimality}, that while the \'etale atlas $U$ from Lemma \ref{etale weighted} is suitable for all the local embeddings
\bea \ol{M}^a_h \to \ol{M}_{0,\cA}(\PP^n, d,a) \eea
(with $|h|\geq 1/a$) at once, it is not minimal for each map taken separately. For example, if $|h|>d/2$ then  $\ol{M}^a_h$ is embedded in $\ol{M}_{0,\cA}(\PP^n, d,a)$ and yet, $\ol{M}'_{0,\cA}(\PP^n, d,a)$ will still contain $\left(\begin{array}{l}d\\|h|\end{array}\right)$ copies of
$\ol{M}'^a_h$.
\end{remark}

\begin{theorem} \label{rigid spaces}
The moduli problem of semi-rigid $(\cA, a)$--stable maps is finely
represented by the Deligne-Mumford stack $\ol{M}_{0,\cA}'(\PP^n,d,a).$
\end{theorem}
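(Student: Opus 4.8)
The plan is to establish, functorially in the test scheme $S$, an equivalence between the groupoid of semi-rigid $(\cA,a)$--stable maps over $S$ and the groupoid of $S$--points of $X' := \ol{M}'_{0,\cA}(\PP^n,d,a)$, the stack produced by \thmref{X'} from the network of local embeddings $\ol{M}^a_I \to \ol{M}_{0,\cA}(\PP^n,d,a)$. The central tool is the universal property of $X'$. By \thmref{universality}, applied inductively along the strata of the network underlying the construction in \thmref{X'}, a morphism $S \to X'$ is the same datum as a family $(C \to S, \{p_i\}, \cL, e)$ of $(\cA,a)$--stable maps — that is, a morphism $S \to \ol{M}_{0,\cA}(\PP^n,d,a)$ — together with, for each stratum index $I$, a section of the \'etale cover $\ol{M}^a_I \times_X S \to g(\ol{M}^a_I) \times_X S$, subject to compatibility under the morphisms $\phi^I_J$ relating the strata. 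The whole proof therefore reduces to matching this compatible section datum with the combinatorial datum of a collection $\{P_s\}_{s\in S}$ of partitions of $D = \{1,\dots,d\}$.

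First I would make the \'etale covers $V^a_I \to W_I$ of the network explicit in moduli terms. By the description of $\ol{M}^a_I$ recalled above, a point of the image $W_I$ is a stable map whose curve is of $I$--splitting type, in which the tails $C_h$ and base points $p_h$ carry the subsets $h \in I$ as their degree and cokernel labels, so that $\deg \cL_{|C_h} = |h|$ and $\dim \Coker e_{p_h} = |h|$. Reading $D$ as the $d$ points in which a generic hyperplane section meets the map, a sheet of the cover amounts to extending this partial assignment to a full distribution of all $d$ labels among every component and base point of the fiber; the index $a$ ranges over the finitely many such admissible distributions, and the operation of recording them is finite and \'etale over the stratum. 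I would then check that the numerical constraints defining an admissible distribution are exactly conditions (1)--(2) in the definition of a semi-rigid map, so that choosing a section of $V^a_I \to W_I$ over $S$ is precisely choosing, for each $s$ in the open stratum of $\ol{M}^a_I$, a partition $P_s$ of $D$ indexed by $N_s$ and compatible with $(\cL,e)$.

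With this dictionary in place the equivalence is assembled in both directions. Given a morphism $S \to X'$, its underlying family of stable maps together with the sections over the strata yields, by the identification above, a collection $\{P_s\}_{s \in S}$; I would verify that these fit into a single constructible family of partitions and that the identification $N_s \cong \{\text{components}\} \sqcup \{x : \Coker e_x \neq \emptyset\}$ is respected. Conversely, a semi-rigid family cuts out, over each stratum, the unique sheet of $V^a_I \to W_I$ determined by $P_s$, hence the required sections, which glue to a morphism $S \to X'$. The statement about isomorphisms is then formal: an isomorphism of semi-rigid maps is by definition an isomorphism of the underlying stable maps preserving the partitions, which is exactly an isomorphism of the associated $X'$--valued points. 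Faithfulness, fullness and essential surjectivity of these mutually inverse constructions give the equivalence of groupoids, and its functoriality in $S$ gives fine representability.

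The hard part will be the compatibility of the section data across strata, namely checking that the combinatorics of refining partitions agrees with the gluing relations that define $X'$ in the proof of \thmref{X'}. When a curve degenerates from the stratum indexed by $I$ to a deeper stratum indexed by $J \supseteq I$, a component may split and base points may appear, so the partition $P_s$ must refine accordingly, and one must show that the admissible refinements are governed by precisely the same combinatorics as the fiber-product formula $Y_J \times_{Y_I} Y'_I = \bigsqcup_{J' \in \cP_j,\, J' \supseteq I} Y'_{J'}$ together with the inductive gluing $N_I = \bigcup_{J \supset I} Y'_J$ along the overlaps $Y'_{I \cup \{h,h'\}}$. Concretely, the number of sheets of $V^a_J \to W_J$ over a fixed sheet of $V^a_I \to W_I$ must equal the number of admissible partitions for the $J$--stratum refining the given partition for the $I$--stratum; establishing this bijection at the level of the \'etale groupoid presentations, and confirming that it respects the groupoid relations cutting out $R'$ in \thmref{X'}, is the technical heart of the argument.
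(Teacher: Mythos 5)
Your proposal follows essentially the same route as the paper's proof: forget the partitions to obtain $S \to \ol{M}_{0,\cA}(\PP^n,d,a)$, observe that the partition data singles out a sheet of each \'etale map $\ol{M}^a_I \times_X S \to g(\ol{M}^a_I)\times_X S$, invoke the universal property of Theorem~\ref{universality} (inductively, through the construction of Theorem~\ref{X'}) to produce the map $S \to \ol{M}'_{0,\cA}(\PP^n,d,a)$, and recover both the stable-map structure and the partitions by pullback from the universal family in the converse direction. The cross-strata compatibility that you flag as the technical heart is dispatched in the paper by a single observation making your anticipated sheet-counting bijection unnecessary: the partition $P_s$ completely determines the splitting types of $C_s$, so each fiber product $S_I = S\times_{\ol{M}_{0,\cA}(\PP^n,d,a)}\ol{M}^a_I$ is \emph{embedded} in $S$ (the sections exist and are automatically compatible, all being cut out by the same family of partitions), while conversely $P_s$ is completely determined by the set $\{I : s\in S_I\}$.
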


\begin{proof}

Let $p:\ol{M}_{0,\cA}'(\PP^n,d,a) \to \ol{M}_{0,\cA}(\PP^n,d,a)$ denote the universally closed, \'etale map resulting from the construction in Definition \ref{M'}, and let $\cU$ denote the universal family on $\ol{M}_{0,\cA}(\PP^n,d,a)$. For each $s\in  \ol{M}_{0,\cA}'(\PP^n,d,a)$, we will define a partition $P_s$ of $D$ compatible with $\cU_{p(s)}$ in the sense of Definition \ref{semi-rigid}.
Then  $\cU$ together with the family of partitions $\{ P_s\}_s$ will form the universal family $\cU'$ on $\ol{M}_{0,\cA}'(\PP^n,d,a)$.

Indeed, for every $s\in \ol{M}_{0,\cA}'(\PP^n,d,a)$ there exists a uniquely associated nested set $I$ (possibly $I=\emptyset$) such that $s\in (\ol{M}^a_I)'\setminus (\bigcup_{J\supset I}(\ol{M}^a_J)')$, and if $I_1$ and $I_2$ are associated to $s_1, s_2$, respectively, then \bea  s_1\in \ol{\{s_2\} } \Rightarrow I_1 \supseteq I_2.  \eea
There exists a correspondence between nested sets $I\in \cP(D)$ and partitions $P_I$ of $D$, such that
\bea J \supseteq I  \Longleftrightarrow  P_J \mbox{ is a refinement of } P_I:  \eea

The elements of the partition $P_I$  are the sets $h\setminus (\bigcup_{h''\in I, h''\not=h}h'')$ for all $h\in I$, together with $D\setminus (\bigcup_{h''\in I}h'')$.

Conversely, given any semi-rigid $(\cA, a)$--stable map $(C\to S, \{ p_i\}_ {i}, \cL, e)$, then for each point $s\in S$, the associated partition $P_s$ of $D$ uniquely defines a nested set $I(s)$: The elements $h$ of $I(s)$ correspond to chains of components of $C$, of length at least two, which start from the component containing the special marked point $p_1$ and end with a tail. Then $h=\bigcup_{\alpha} B_{\alpha}$, where the union is taken after all the components $C_\alpha$ in the chain with the exception of the first one.
This leads to a stratification of $S$ indexed by nested sets, with locally closed strata
\bea  \begin{array}{ll} \{  s\in S;  I(s)=I   \}. \end{array} \eea
Let $S_I$ denote the closure of the above set in $S$. Then by condition (2) in Definition \ref{semi-rigid},
\bea S_I = \begin{array}{ll} \{  s\in S;  I(s)\supseteq I   \}. \end{array} \eea
If $J$ and $K$ are nested sets such that $J\cup K$ is nested as well, then $S_J\bigcap S_K = S_{J \cup K}$.
For each nested set $I$, there exists a natural map $f_I: S_I \to \ol{M}^a_I$, obtained by forgetting the partitions $P_s$ but remembering the associated nested sets. When $I=\emptyset$ we get $f: S \to \ol{M}_{0,\cA}(\PP^n, d,a)$. If $J \supset I$, then the diagram below is commutative
\bea \diagram  S_J \rto^{f_J} \dto_{\subset } & \ol{M}^a_J \dto^{\phi_J^I} \\
 S_I \rto^{f_I} & \ol{M}^a_I. \enddiagram  \eea
Moreover, if $G_I\subset S_d$ is the largest subgroup that keeps each $h\in I$ fixed,
 \bean \label{prod} S_I\times_{\ol{M}^a_I}\phi_J^I(\ol{M}^a_J) \cong \bigcup_{g\in G_I/G_J} S_{g(J)}. \eean
  These conditions are sufficient to define $f': S \to \ol{M}'_{0,\cA}(\PP^n, d,a)$. Indeed,  lifts $f'_K: S_K \to (\ol{M}^a_K)'$ can be constructed by decreasing induction on $K$. For the largest sets $K\supset I$, relation (\ref{prod}) implies $S_I\times_{\ol{M}^a_I}\phi_K^I(\ol{M}^a_K) \cong \bigsqcup_{g\in G_I/G_K} S_{g(K)}\to \bigsqcup_{g\in G_I/G_K}(\ol{M}^a_{g(K)})'$, which by Theorem \ref{universality} induces a morphism from $S_I$ to the \'etale lift $(\ol{M}^a_I)^{(K)}$ of $\phi_K^I$; the commutative diagram above lifts as well.
   Now for any nested set $K$, moving on to the step when $(\ol{M}^a_I)'$ have been constructed for all $I\supset K$, then by \cite{abramovich}, (Appendix 1), the lifts of the commutative diagram above glue to a morphism
  \bea  S_K\times_{\ol{M}^a_K}\phi_I^K(\ol{M}^a_I) \cong \bigcup_{g\in G_I/G_K} S_{g(I)}\to N_K,\eea
where $N_K$ is obtained by gluing all $(\ol{M}^a_{g(I)})'$ along $(\ol{M}^a_{g(I)\cup g'(I)})'$. We note that $\phi_I^K(\ol{M}^a_I)$ is the image of $N_K$ in $\ol{M}^a_K$. Now  the lift $(\ol{M}^a_{K})'$ is constructed so that $N_K \hookrightarrow (\ol{M}^a_{K})'$.  Again by  Theorem \ref{universality}, one obtains a lift $S_K \to (\ol{M}^a_K)'$.

 We note that  $(C\to S, \{ p_i\}_ {i }, \cL, e)$ is the pullback through $f$ of the universal family $\cU$. As for every $s\in S$, the partition
 $P_s$ on $C_s$ is completely determined by the nested set $I$ such that  $s\in S_I\setminus(\bigcup_{J\supset I}S_J)$, it follows that the partition $P_s$
 is also inherited by pullback through $f'$ from the universal family on $\ol{M}_{0,\cA}'(\PP^n, d,a)$.
\end{proof}

\begin{notation}
For each  integer $k\in\{2,...,d\}$, let $a_k$ be any real number with the property
$\frac{1}{k}\leq a_k < \frac{1}{k-1}$. Let  $a_1=1$.
\end{notation}

By \cite{noi2}, a sequence of birational contractions of $\ol{M}_{0,m}(\PP^n,d)$ is given by the spaces $\ol{M}_{0,\cA_k}(\PP^n,d,a_k)$ with $\cA_k=(1,a_k,...,a_k)$ for $d>k>0$.

\begin{lemma}\label{blowup etale lift commute}
For each integer $l$ with $d'\geq l> k$, the morphism $f_I^{k,(l)}:\ol{M}^{a_{k-1}, (l)}_I \to \ol{M}^{a_{k}, (l)}_I$ is a weighted blow-up along the local embedding $\phi_{Ih}^{I, (l)}: \ol{M}^{a_{k}, (l)}_{Ih}\to \ol{M}^{a_{k}, (l)}_I$ with $h\not\in I$ such that $|h|=k$.

If $k=l$, then $\phi_{Ih}^{I, (l)}$ is an embedding and $f_I^{k,(l)}$ is the weighted  blow-up along all such embeddings with $|h|=l$.
\end{lemma}

\begin{proof}
The statement follows by decreasing induction on $l$. The paragraph above the Lemma sets up the initial step $l=d$. Assume that the Lemma holds for $l>k$.
Consider $h$ and $h'$ such that $|h|=k$ and $|h'|=l-1$. By the induction hypothesis, both $f_I^{k,(l)}$ and $f_{Ih'}^{k,(l)}$ are  weighted blow-ups along the local embeddings, and the corresponding local embeddings are connected by a Cartesian diagram due to Lemma \ref{etale lift-local embeddings commute}(3) applied to $K=I\cup\{h\}$ and $J=I \cup\{h'\}$. Thus the natural diagram containing $f_I^{k,(l)}$ and $f_{Ih'}^{k,(l)}$ is Cartesian, following the definition of  blow-ups along the local embeddings and the
natural \'etale atlases described in the proof of Lemma \ref{etale lift-local embeddings commute}. We can now apply Corollary
\ref{X_Y functorial} at each step in the construction of the $(l-1)$--\'etale lifts, leading to the following diagram being Cartesian:
\bea \diagram  \ol{M}^{a_{k-1}, (l-1)}_{I} \rrto^{f_I^{k,(l-1)}} \dto && \ol{M}^{a_k, (l-1)}_{I} \dto \\\ol{M}^{a_{k-1}, (l)}_{I} \rrto^{f_I^{k,(l)}} && \ol{M}^{a_k, (l)}_{I}.
\enddiagram \eea
If $l-1>k$, the following diagram is Cartesian both when $j=k$ and when $j=k-1$ by Lemma \ref{etale lift-local embeddings commute}(4).
\bea \diagram  \ol{M}^{a_{j}, (l-1)}_{Ih} \rrto^{\phi_{Ih}^{I, (l-1)}} \dto && \ol{M}^{a_j, (l-1)}_{I} \dto \\\ol{M}^{a_{j}, (l)}_{Ih} \rrto^{\phi_{Ih}^{I, (l)}} && \ol{M}^{a_j, (l)}_{I}.
\enddiagram \eea
Since $f_I^{k,(l)}$ was assumed to be a  weighted blow-up along the local embedding $\phi_{Ih}^{I, (l)}$, from the above it follows that $f_I^{k,(l-1)}$ is a weighted blow-up along the local embedding $\phi_{Ih}^{I, (l-1)}$.
Finally, when $l-1=k$, Lemma \ref{etale lift-local embeddings commute}(4) implies that the following diagrams are Cartesian:
\bea \diagram  \bigsqcup_{|h''|=k}\ol{M}^{a_{j}, (l-1)}_{Ih''} \rrto^{\sqcup\phi_{Ih''}^{I, (l-1)}} \dto && \ol{M}^{a_j, (l-1)}_{I} \dto \\\ol{M}^{a_{j}, (l)}_{Ih} \rrto^{\phi_{Ih}^{I, (l)}} && \ol{M}^{a_j, (l)}_{I},
\enddiagram \eea
and $f_I^{k,(k)}$ is the weighted blow-up along $\bigsqcup_{|h''|=k}\ol{M}^{a_{j}, (k)}_{Ih''}$.
\end{proof}

 \begin{theorem}
 Consider the normalization $\ol{M}^{a_{k}}_I$ of a boundary stratum in  $\ol{M}^{a_{k}}_{\emptyset}:=\ol{M}_{0,1}(\PP^n, d, a_k)$.
The total Chern class of $\ol{M}^{a_{k}}_I$ is written in $A(\ol{M}^{a_{k}, (k)}_{\emptyset})$ as
\bea    &  c(\ol{M}^{a_{k}}_I)= (1+H)^{n+1}(1+\psi)^{s_I-1}\prod_{i=1}^{d-l_I}(1+H+i\psi)^{n+1} & \\
& \prod_{h; |h|>k} \frac{(1+D_h)(1+\psi_{h})^{|I_h|-1}
\prod_{j=1}^{|h \setminus \cup_{h'\in
I}h'|}(1+H_{I;h}+j\psi_{h})^{n+1}}{(1+\psi_{h}^0)^{|I_h|-1}
\prod_{j=1}^{|h \setminus \cup_{h'\in
I}h'|}(1+H_{I;h}+j\psi_{h}^0)^{n+1}} & \eea where the product is
taken after all $h$ such that $I\cup \{ h\}$ is still a nested set.
Here $l_I := | \cup_{h\in I} h|$ and $s_I$ is the number of maximal
elements of $I$.

 $H$ denotes the pullback of the hyperplane divisor from $\PP^n$ and $\psi$ denotes the cotangent line class, $D_h$ represents the class of the divisor $\ol{M}^{a_k}_{\{ h \} }$,
while
\bea \psi_{h}:=\psi-\sum_{h'\supseteq h} D_{h'} \mbox{ and }
\psi_{h}^0:=\psi-\sum_{h'\supset h} D_{h'},  \eea \bea
H_{I;h}:=H+(d-|h \cup(\cup_{h'\in I}h')|)\psi-\sum_{h'\supset h}|h'\backslash (h\cup(\cup_{h''\in I}h''))|D_{h'}.\eea

 \end{theorem}

\begin{proof}

   We proceed by induction on $k$. The last member in the sequence of contractions of $ \ol{M}_{0,1}(\PP^n,d) $ is  $\ol{M}_{0,1}(\PP^n, d, a_{d-1})$, a weighted projective fibration $\cP(A)$ over $\PP^n$ described in \cite{noi1}, Lemma 3.3.
The normal bundle of the zero section in $A$ splits by weights as $\cN_{\PP^n|A} = \oplus_l\cN_l/\cN_{l+1}$, where $$\cN_l=(\oplus_{i=1}^{(n+1)(d-l)}\cO_{\PP^n}(1))/\cO_{\PP^n}.$$
   Thus by Proposition \ref{proj fibr}, the total Chern class
              $$c(\ol{M}_{0,1}(\PP^n,d,a_{d-1})) = (1+H)^{n+1}(1+\psi)^{-1}\prod_{l=1}^{d}(1+H+l\psi)^{n+1}.$$

    More generally, for any $I\in \cP(D)$,  denote $l_I := | \cup_{h\in I} h|$ and let $s_I$ be the number of maximal elements of $I$, i.e. elements $h\in I$ such that there is no $h'\in I$ with $h\subset h'$.

     The normal bundle $$\left.
      \cN_{\ol{M}^{a_{d-1}}_I |\ol{M}^{a_{d-1}}} = \cN_{\PP^1\times\PP^n |
(\PP^1)^{s_I}\times\PP^n_{d-l_I}}\right|_{\PP^n}=
(\cO_{\PP^n}^{s_I}\oplus\oplus_{i=1}^{(n+1)(d-l_I)}\cO_{\PP^n}(1))/\cO_{\PP^n}$$
admits a natural filtration  $\{\cN_l\}_l$ described in Lemma 3.2 of \cite{noi1}. Here
$$\cN_l=(\cO_{\PP^n}^{s_I}\oplus\oplus_{i=1}^{(n+1)(d-l_I-l)}\cO_{\PP^n}(1))/\cO_{\PP^n}.$$
   $\CC^*$ acts on the bundle
$\cN=\oplus_l \cN_l/\cN_{l+1}$  with weights $(1,...,d-l_I)$. Thus by Proposition \ref{proj fibr}, the total Chern class of the weighted projective fibration $\ol{M}^{a_{d-1}}_I  \to \PP^n$ is
$$ c(\ol{M}^{a_{d-1}}_I )=(1+H)^{n+1}(1+\psi)^{s_I-1}\prod_{i=1}^{d-l_I}(1+H+i\psi)^{n+1}. $$
 The morphism $f_I^{a_{k} }: \ol{M}^{a_{k-1}}_I\to \ol{M}^{a_{k}}_I $ is a weighted blow-up along the local embedding $\ol{M}^{a_{k}}_{Ih} \to \ol{M}^{a_{k}}_I$, for $h\not\in I$ with $|h|=k$. By Lemma \ref{blowup etale lift commute}, the $k$-th \'etale lift $\ol{M}^{(a_{k-1}, (k))}_I\to \ol{M}^{(a_{k}, (k))}_I $ is obtained by successive weighted blow-ups of all embeddings $ \ol{M}^{(a_{k}, (k))}_{Ih}  \to \ol{M}^{(a_{k}, (k))}_I $ where $h\not\in I$ with $|h|=k$. As the \'etale covers are preserved through \'etale lifts, then with the notations from  Lemma $\ref{U_x}$ , the \'etale atlas $V_I$ of $\ol{M}^{(a_{k}, (k))}_I $ satisfies $V_I\times_{\ol{M}^{(a_{k}, (k))}_I }\ol{M}^{(a_{k}, (k))}_{Ih} =V_{Ih}$ by construction, while by Lemma $\ref{U_x}$ , all strata $V_{Ih}$ with $|h|=k$ intersect each other, and their intersections, transversely.
 The normal bundle of the blow-up locus for $\ol{M}^{a_{k-1}}_I\to \ol{M}^{a_{k}}_I $ when $k\in \{ 1,..., d-1\}$ and the weights of the appropriate $\CC^*$ action on it have been calculated in \cite{noi1}, Lemma 3.21. Let $h\in \cP(D)$ be such that $|h|=k$ and denote by $I_h\subseteq I$ the set of all $h'\in I$ such that $h'\subset h=\emptyset$.

 The top equivariant Chern class of the normal bundle $\cN_{\ol{M}^{a_{k}}_{I\cup\{h\} }|\ol{M}^{a_{k}}_I}$ when  evaluated at $t=D_h$ is $$ (1+\psi_{h})^{|I_h|-1} \prod_{j=1}^{|h \setminus \cup_{h'\in I_h}h'|}(1+H_{I;h}+j\psi_{h})^{n+1}  $$
 and thus formula (\ref{general blowup}) reads in this case
  \bea c(\ol{M}^{a_{k-1}}_I)= f_I^{a_{k} *}c(\ol{M}^{a_{k}}_I)\prod_{h; |h|=k}\frac{(1+D_h)(1+\psi_{h})^{|I_h|-1} \prod_{j=1}^{|h \setminus \cup_{h'\in I}h'|}(1+H_{I;h}+j\psi_{h})^{n+1}}{(1+\psi_{h}^0)^{|I_h|-1} \prod_{j=1}^{|h \setminus \cup_{h'\in I}h'|}(1+H_{I;h}+j\psi_{h}^0)^{n+1}}.  \eea
 Iterating the formula above for all $k\in \{ 1,..., d-1\}$, we obtain the relation stated in the Theorem.
 \end{proof}


In particular,
\bea  & c(\ol{M}_{0,1}(\PP^n, d))= (1+H)^{n+1}\frac{\prod_{i=1}^{d}(1+H+i\psi)^{n+1}}{(1+\psi)}
 \prod_{h} \frac{(1+D_h)(1+\psi_{h}^0)\prod_{j=1}^{|h|}(1+H_{h}+j\psi_{h})^{n+1}}{(1+\psi_{h}) \prod_{j=1}^{|h|}(1+H_{h}+j\psi_{h}^0)^{n+1}}, & \eea
where $H_h=H+(d-|h|)\psi-\sum_{h'\supset h}|h'\backslash h|D_{h'}= H_{\emptyset;h}$ from above.
We recall that the
spaces $\ol{M}_{0,m}(\PP^n, d, a_k)$ with $k\in \{1,..., d-1 \}$ can
be thought of as normalized strata of $\ol{M}_{0,1}(\PP^n, d+m-1,
a_k)$. Indeed, as mentioned before,  one can write
$$ \ol{M}_{0,m}(\PP^n, d, a_k) \cong \ol{M}_{I}^{a_k} $$ for $I=\{
\{2_M\}, \{3_M\}, ..., \{d_M\} \} \subset \cP(\{1_D,..., d_D, 2_M, ..., m_M \})$. Thus \bea  &
c(\ol{M}_{0,m}(\PP^n, d))= (1+H)^{n+1}(1+\psi)^{m-2}\prod_{i=1}^{d}
(1+H+i\psi)^{n+1} & \\
  & \prod_{h\in \cP(D')\setminus \{ \emptyset, \{2_M\},...,\{m_M\}\} }
  \frac{(1+D_h)(1+\psi_{h})^{|h_M|-1}\prod_{j=1}^{|h \cap D|}
  (1+H_{m;h}+j\psi_{h})^{n+1}}{(1+\psi_{h}^0)^{|h_M|-1}
  \prod_{j=1}^{|h \cap D|}(1+H_{m;h}+j\psi_{h}^0)^{n+1}},
  & \eea
where $h_M=h\cap \{2_M, ...,m_M\}$ and
 \bea H_{m;h}=H+|D\setminus h|\psi-\sum_{h'\supset h}|D\cap h'\backslash
h|D_{h'}.\eea

 In particular, one recovers the formulae for the first Chern classes
as calculated in \cite{pand} and in a more general setup in
\cite{starr-dejong}.

Let $f: \ol{M}_{0,1}(\PP^n, d) \to \ol{M}_{0,0}(\PP^n, d)$ be the
forgetful morphism. In \cite{noi3} this has been split into a
sequence of simple blow-ups and a relatively simpler fibration  $
\ol{U}^{\lfloor (d-1)/2\rfloor }\to \ol{M}_{0,0}(\PP^n, d)$.

Choose $I:=\{h\subset \{1,...,d\}, |h|>d/2\}$ if $d$ is odd, and let
$I$ additionally contain half of the sets $h$ with $|h|=d/2$ if $d$
is even, under the condition that no two sets $h$, $\bar{h}$ are
simultaneously in $I$. The class
$$\psi'_I:= \psi-\sum_{h\in I}D_h   $$
can be considered as the $\cO(1)$--line bundle for the fibration $
\ol{U}^{\lfloor (d-1)/2\rfloor }\to \ol{M}_{0,0}(\PP^n, d)$
above. When $d$ is odd, $\psi'_I$ is the pullback of the relative
cotangent class for the morphism $\ol{U}^{\lfloor (d-1)/2\rfloor
}\to \ol{M}_{0,0}(\PP^n,d)$. The classes $D_h$ with $h\in I$ are
pullbacks of the classes of the exceptional divisors on
$\ol{M}_{0,1}(\PP^n, d)$. From the sequence of blow-ups mentioned above we obtain a formula comparing the total Chern
classes of moduli spaces with/without marked points:
$$ f^*(c(\ol{M}_{0,0}(\PP^n, d)))= c(\ol{M}_{0,1}(\PP^n,
d))\frac{\prod_{h\in I}(1+D_h)P_h(1-D_h)}{P(1+\psi_I)\prod_{h\in
I}P_h(1)},$$ where $P(\psi_I)$ and $P_h(-D_h)$ are the quadratic
expressions  in \cite{noi2}, Theorem 3.3., (1)--(2).

 Proposition
2.1. in \cite{noi3} shows how to recover the class
$c(\ol{M}_{0,0}(\PP^n, d))$ from its pullback.




\section*{Appendix. Euler's sequence for a weighted projective bundle}

Let $g: P\to Y$ be a weighted projective bundle and a smooth
morphism of stacks. With the notations from Section 2, consider the
splitting $\cN_{Y|A}=\oplus_i L_i$, such that for all $n$ with $
\cN_n\not= \cN_{n+1}$ there is a unique index $i$ with
$L_i=\cN_n/\cN_{n+1}$. Denote by $w_i=n$ the weight of the naturally
induced $\CC^*$--action on $L_i$.

Let $\cT_{P|Y} = \Ker ( \cT_{P} \to g^*\cT_Y )$ denote the relative
tangent bundle of $g:P\to Y$.

\begin{lemma}
There is an exact sequence of vector bundles on $P$ \bean \diagram 0
\rto & \cO_P \rrto^{ \sigma    } && {\oplus_i g^* L_i \otimes
\cO_P(w_i)} \rrto^{    e } && \cT_{P|Y} \rto & 0.
\enddiagram  \eean
\end{lemma}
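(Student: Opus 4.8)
The plan is to descend the ordinary Euler sequence of a principal $\CC^*$-bundle through the groupoid presentation of $P$. Write $P=[A^{\circ}/\CC^*]$, where $A^{\circ}$ is the complement of the zero section in the total space of $\bigoplus_i L_i$, with projection $\rho\colon A^{\circ}\to Y$, and where $\CC^*$ acts on the summand $L_i$ with weight $w_i$; this is exactly the groupoid $\CC^*\times A^{\circ}\rightrightarrows A^{\circ}$ whose two arrows are the projection and the action. Quasi-coherent sheaves on $P$ are then $\CC^*$-equivariant sheaves on $A^{\circ}$, and by definition of the tautological sheaf the line bundle $\cO_P(w)$ corresponds to $\cO_{A^{\circ}}$ equipped with the weight-$w$ equivariant structure. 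I will produce the sequence $\CC^*$-equivariantly on the atlas $A^{\circ}$ and then read off its descent on $P$.

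First I would write down the relative Euler sequence on the atlas. The quotient map $\pi\colon A^{\circ}\to P$ is a $\CC^*$-torsor, so the factorization $A^{\circ}\to P\to Y$ yields
\[ 0 \to \cT_{A^{\circ}|P} \to \cT_{A^{\circ}|Y} \to \pi^*\cT_{P|Y} \to 0. \]
The relative tangent $\cT_{A^{\circ}|P}$ is trivialized by the fundamental (Euler) vector field $\nu$ of the $\CC^*$-action, giving an isomorphism $\cO_{A^{\circ}}\cong\cT_{A^{\circ}|P}$; since $\CC^*$ is abelian, $\nu$ is an invariant section, so this isomorphism is equivariant of weight $0$. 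Because $A$ is the total space of the vector bundle $\bigoplus_i L_i$ over $Y$, its relative tangent bundle is canonically $\cT_{A^{\circ}|Y}\cong\rho^*\bigl(\bigoplus_i L_i\bigr)$, and under this identification the action scales the summand $\rho^*L_i$ with weight $w_i$, so that $\nu=\sum_i w_i\,s_i$ for the tautological sections $s_i$ of $\rho^*L_i$.

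Next I would descend each term. The trivial term $\cO_{A^{\circ}}$ descends to $\cO_P$. The summand $\rho^*L_i$ of $\cT_{A^{\circ}|Y}$ carries the weight-$w_i$ structure; since $g^*L_i$ is represented by $\rho^*L_i$ with the trivial (weight-$0$) structure and $\cO_P(w_i)$ is represented by $\cO_{A^{\circ}}$ with weight $w_i$, the tensor product $g^*L_i\otimes\cO_P(w_i)$ is represented by exactly $\rho^*L_i$ with weight $w_i$. Hence $\cT_{A^{\circ}|Y}$ descends to $\bigoplus_i g^*L_i\otimes\cO_P(w_i)$, while $\pi^*\cT_{P|Y}$ descends to $\cT_{P|Y}$. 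The descended map $\sigma\colon\cO_P\to\bigoplus_i g^*L_i\otimes\cO_P(w_i)$ is the image of $1\mapsto\nu$, and its cokernel is $\cT_{P|Y}$, which is the asserted sequence. Exactness is inherited from the atlas because $\pi$ is faithfully flat, so exactness of the equivariant sequence on $A^{\circ}$ is equivalent to exactness of its descent on $P$.

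The delicate point --- and the reason for tracing everything through the groupoid presentation --- is the bookkeeping of the equivariant structures, so that the correct twist $\cO_P(w_i)$, rather than an untwisted factor or a twist of the wrong sign, appears on each summand; this is precisely where the weighted case departs from the classical Euler sequence, in which all weights equal $1$ and no twisting comparison is needed. A secondary point is that for $w_i>1$ the $\CC^*$-action on $A^{\circ}$ is not free, so $P$ is a Deligne--Mumford stack with nontrivial automorphisms; one must check that $\pi\colon A^{\circ}\to P$ is nonetheless a $\CC^*$-torsor, its fibres being the full orbits, which is what legitimizes both the relative Euler sequence and the trivialization of $\cT_{A^{\circ}|P}$ by $\nu$. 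Once these identifications are pinned down on the groupoid, no further computation is required.
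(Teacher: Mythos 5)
Your proof is correct, but it takes a genuinely different route from the paper's. The paper argues entirely through the \'etale groupoid presentation: it covers $\cP[w_0:\dots:w_n]$ by affine charts $V_i\cong\CC^n$, writes out the transition maps and the resulting gluing data for $\cO(1)$ and for the tangent bundle, defines the sequence by the explicit local formulas $\sigma(1)=(w_0x_0,\dots,w_nx_n)$ and $e(s_i^{\otimes a_k})=\partial/\partial u_i^k$ (with the Euler relation $e(s_i^{\otimes a_i})=-\sum_{k\neq i}\frac{w_k}{w_i}u_i^k\,\partial/\partial u_i^k$ on the $i$-th summand), checks by differentiating the transition functions that these definitions glue, and then handles a general weighted projective bundle by patching trivializations over $U,U'\subset Y$ along transition maps $\varphi=(\varphi_0,\dots,\varphi_n)$ with $\varphi_i$ homogeneous of weight $w_i$, using that $\varphi_{kj}:=\partial\varphi_k/\partial x_j$ vanishes unless $w_k=w_j$. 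You instead present $P$ globally as $[A^{\circ}/\CC^*]$ and descend the relative tangent sequence of the torsor $\pi\colon A^{\circ}\to P$, trivializing $\cT_{A^{\circ}|P}$ by the invariant Euler field $\nu=\sum_i w_i s_i$ (which is nowhere zero on $A^{\circ}$ since some $v_i\neq 0$ and $w_i\geq 1$ --- worth saying explicitly, as it is what makes the sequence exact on the left as a sequence of bundles). Your route buys a chart-free, global argument in which the twist $\cO_P(w_i)$ falls out of the equivariant structure on $\rho^*L_i$ (the differential of $t$ scales the $L_i$-direction by $t^{w_i}$), exactness is automatic from smoothness of $\pi$ plus exactness of descent, and your $\nu$ is visibly the paper's $\sigma(1)$; the conventions match because the weight-$w_i$ functions on $A^{\circ}$ are exactly the sections the paper denotes $x_i\in H^0(\cO_P(w_i))$. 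What the paper's computation buys in exchange is explicit coordinate formulas (used elsewhere in its Chern class computations) and independence from the one substantial input you invoke without proof, namely the equivalence between quasi-coherent sheaves on the quotient stack and $\CC^*$-equivariant sheaves on the atlas, together with the sign convention identifying $\cO_P(1)$ with the weight-one structure; you correctly flag this bookkeeping as the delicate point, and in a complete write-up it should be cited or verified, but it is standard and your argument is sound.
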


\begin{proof}

 The weighted projective bundle $P$ is locally trivial over $Y$,
i.e. $P_{|U}\cong \PP[w_0:...:w_n]\times U$ for each open set $U$ in
some open cover of $Y$. We first discuss briefly the case of the
weighted projective space $\cP[w_0:...:w_n]= [ \Spec \CC[x_0, ...,
x_n] \setminus\{ 0\} /\CC^* ]$, where $\CC^*$ acts on $x_i$ with
weight $w_i$.

An \'etale presentation of the Deligne-Mumford stack
$\cP[w_0:...:w_n]$ is given by
  $$ \bigsqcup_{i,j\in\{0,...,n\} } V_{ij} \rightrightarrows
   \bigsqcup_{i\in\{0,...,n\} } V_i $$
  where $V_i = \CC^n$ with coordinates $\{ u_{i}^k \}_{k\in\{0,...,n\}\setminus\{ i\} }$ and  $V_{ij}=V_{ji}=\CC^{n-1}\times \CC^*$ with
  coordinates $\{ v_{ij}^k \}_{k\in\{0,...,n\}\setminus\{ i\} }$ such
  that $v_{ij}^j\not=0$, and the \'etale maps $\phi_{ij}^i: V_{ij} \to
  V_i $ given by $$ \phi_{ij}^i (\{ v_{ij}^k \}_{k} ) = ( v_{ij}^0,...,
  (v_{ij}^j)^{w_j}, ..., v_{ij}^n),$$
  with $(v_{ij}^j)^{w_j}$ in the $j$--th position, and the change of
  coordinates
\bea \begin{array}{cc} v_{ji}^i=\frac{1}{v_{ij}^j}, &
v_{ji}^k=\frac{v_{ij}^k}{(v_{ij}^j)^{w_k}} \mbox{ for } k\not=i,j \end{array}
\eea on $V_{ij}$.

If $\{ u_{i}^k \}_{k\in\{0,...,n\}\setminus\{ i\} }$ are
coordinates on $V_i$,  the map to the coarse moduli space
$\cP[w_0:...:w_n]\to  \PP[w_0:...:w_n]$ sends
 $(u_i^k)_{k\in\{0,...,n\}\setminus\{ i\} } \in V_i$ to the point
 $[u_i^0:...:1:...:u_i^n]$ with $1$ in the $i$-th position.

The line bundle $\cO_{\cP[w_0:...:w_n]}(1)$ is determined by the
trivialisations ${L_i}\cong V_i\times\CC$, with gluing maps
$$\phi_{ij}^{i
*}{L_i}  \to \phi_{ij}^{j *}{L_j}$$
given by $\phi_{ij}^{j *}s_j=v_{ij}^j \phi_{ij}^{i*} s_i$, where $s_i$
and $s_j$ are the unitary sections on $L_i$ and $L_j$.
Thus  the total space of $\cO_{\cP[w_0:...:w_n]}(1)$ is $$\cP[w_0:...:w_n: 1]\setminus \{ [0:...:0:1]\} \to \cP[w_0:...:w_n].$$
Each weighted projective coordinate $x_i$ gives rise to a global
section of $\cO_{\cP[w_0:...:w_n]}(w_i)$,  which will be denoted by $x_i$ as well.

The tangent bundle to $\cP[w_0:...:w_n]$ is determined by the vector
bundles $\cT_{V_i}\cong V_i\times\CC^n$ with isomorphisms $$
\phi_{ij}^{i
*}\cT_{V_i} \cong \phi_{ij}^{j
*}\cT_{V_j},$$ identifying
\bea  \phi_{ij}^{j*}    \left( \frac{\partial }{\partial u_j^k}
\right) = (v_{ij}^j)^{w_k} \phi_{ij}^{i*} \left(
\frac{\partial}{\partial u_i^k} \right), \eea \bea
 \phi_{ij}^{j*} \left( \frac{\partial}{\partial u_j^i}
\right) = (v_{ij}^j)^{w_i} \phi_{ij}^{i*}  \left( -\sum_{k\not=i}
\frac{w_k}{w_i} v_{ij}^k \frac{\partial}{\partial u_i^k} \right). \eea
Indeed, $\phi_{ij}^{i *}\cT_{V_i} = \langle \left\{ \phi_{ij}^{i*}
\left( \frac{\partial}{\partial u_i^k} \right)=
\frac{\partial}{\partial v_{ij}^k} \right\}_{k\in\{0,...,n\}\setminus\{
i\} } , \phi_{ij}^{i*}\left( \frac{\partial}{\partial u_j^i}
\right)=\frac{1}{w_j} (v_{ij}^j)^{w_j-1} \frac{\partial}{\partial
v_{ji}^i} \rangle, $ and the identifications above are naturally
derived from the change of coordinates $v_{ij}^k$ into $v_{ji}^k$.

 The short exact sequence
\bean \diagram 0 \rto & \cO_{\cP[w_0:...:w_n]} \rto^{ \sigma } & {\oplus_i \cO_{\cP[w_0:...:w_n]}(w_i)} \rto^{    e  } &
\cT_{\cP[w_0:...:w_n]} \rto & 0
\enddiagram  \eean
is defined locally by $$\sigma (1):= (w_0x_0, ..., w_nx_n),$$ \[
\begin{array}{cc} e(s_i^{\otimes a_k}):= \frac{\partial}{\partial u_i^k}, &
 e(s_i^{\otimes a_i}):= -\sum_{k\not=i}
\frac{w_k}{w_i} u_i^k \frac{\partial}{\partial u_i^k}
\end{array}
\]
and by the presentation of the bundles above this is well defined
globally.


Given a weighted projective bundle $g:P= [A/\CC^* ] \to Y$
 with weights $(w_0, .., w_n)$, consider two open embeddings $U, U'\hookrightarrow Y$ and  trivializations
 $x_U: A_{|U}\cong U\times \CC^{n+1}$ and $x_{U'}: A_{|U'}\cong
U'\times \CC^{n+1}$, with gluing morphisms over $U\times_Y U'$
   $$\varphi=(\varphi_0, ..., \varphi_n) : (U\times_Y U')\times
   \CC^{n+1} \to \CC^{n+1}, $$
such that $\varphi_i$ is homogeneous of degree $w_i$ with respect to
the given weights. For each trivialisation above there is an \'etale
presentation $$ \bigsqcup_{i,j\in\{0,...,n\} } (V_{ij}\times U)
\rightrightarrows
   \bigsqcup_{i\in\{0,...,n\} } (V_i\times U) \mbox{ for } P_{|g^{-1}(U)},$$
   $$ \mbox{ respectively }
    \bigsqcup_{i,j\in\{0,...,n\} } (V'_{ij}\times U') \rightrightarrows
   \bigsqcup_{i\in\{0,...,n\} } (V'_i\times U') \mbox{ for } P_{|g^{-1}(U')}.$$
   Let $g_U$ denote the restriction of $g$ to $P_{|g^{-1}(U)}$, and
   similarly for $U'$. Pullback of the trivialization $x_U$ to
   $V_i\times_Pg^*_U(A_{|U})$, of  $x_{U'}$ to $V_i\times_Pg^*_U(A_{|U})$
   and of the gluing morphism $\varphi$ to $((V_{i}\times U)\times_P(V'_i\times U'))\times
   \CC^{n+1}$ amounts to choosing a root $\varphi_i^{1/w_i}$ of
   $\varphi_i$ such that a point $(V_{i}\times U)\times_P(V'_i\times U')$
   admits a change of coordinates $$u'^j_i =    \varphi_j (g(u), p_i(u))  \varphi_i^{-w_j/w_i}(g(u), p_i(u)),   $$
where $u=( g(u), ( u_i^j )_{j\not=i} )$ and $p_i(u)= (u_i^0, ...,
1,..., u_i^n)\in \CC^{n+1}$, with 1 in the $i$--th position. Thus
differentiating, \bea \frac{\partial}{\partial u_i^j}=
\sum_k\varphi_{kj} \frac{\partial}{\partial
u'^k_i}(\varphi_i^{-w_k/w_i}) - \varphi_{ij} \left(\sum_{k}
\frac{w_k}{w_i}u'^k_i \frac{\partial}{\partial u'^k_i}
\right)(\varphi_i^{-1}), \eea for $j\not=i$, where the functions
$\varphi_{kj}:=\frac{\partial \varphi_k}{\partial x_j}$ define a
local change of basis for the pullback of the normal bundle
$\cN_{Y|A}$ and $\varphi_i^{-1/w_i}$ defines a local change of basis
for $\cO_{P}(1)$. Furthermore, as $P$ is a weighted projective
bundle, $\varphi_{kj}=0$ unless $w_k = w_j$

This proves that the trivial extensions of the exact sequence (3.1) to
$P_{|g^{-1}(U)}$ and $P_{|g^{-1}(U')}$ glue to a restriction of the
sequence (3.2) on $P_{|g^{-1}(U\times_YU')}$.

\end{proof}


\providecommand{\bysame}{\leavevmode\hbox to3em{\hrulefill}\thinspace}

\end{document}